\definecolor{labelkey}{gray}{.8}
\definecolor{refkey}{gray}{.8}
\numberwithin{equation}{section}
\newtheorem{theorem}{Theorem}[section]
\newtheorem{lemma}[theorem]{Lemma}
\newtheorem{proposition}[theorem]{Proposition}
\newtheorem{definition}[theorem]{Definition}
\newtheorem{corollary}[theorem]{Corollary}
\theoremstyle{remark}
\newtheorem{remark}[theorem]{Remark}
\def\XXint#1#2#3{{\setbox0=\hbox{$#1{#2#3}{\int}$ }
\vcenter{\hbox{$#2#3$ }}\kern-.6\wd0}}
\newcommand{\N}{{\mathbb N}}
\newcommand{\Z}{{\mathbb Z}}
\newcommand{\R}{{\mathbb R}}
\renewcommand{\S}{\mathbb{S}}
\newcommand{\mA}{\mathcal{A}}
\newcommand{\mB}{\mathcal{B}}
\newcommand{\B}{\mathcal{B}}
\newcommand{\mC}{\mathcal{C}}
\newcommand{\mO}{\mathcal{O}}
\newcommand{\mV}{\mathcal{V}}
\newcommand{\mP}{\mathcal{P}}
\newcommand{\mH}{\mathcal{H}}
\DeclareMathOperator{\dv}{div}
\newcommand{\pa}{{\partial}}
\newcommand{\na}{{\nabla}}
\newcommand{\eps}{{\varepsilon}}
\def\curl{\hbox{curl \!}}
\def\div{\hbox{div \!}}
\def\Id{{\mathrm{Id}}}
\newcommand{\dmin}{d_{\mathrm{min}}}
 \renewcommand{\skew}{\mathrm{skew}}
\DeclareMathOperator{\sym}{sym}
\DeclareMathOperator{\dist}{dist}
\DeclareMathOperator{\supp}{supp}
\DeclareMathOperator{\esssup}{esssup}
\newcommand{\wto}{\rightharpoonup}
\newcommand{\dd}{\, \mathrm{d}}
\renewcommand{\d}{\mathrm{d}}
\newcommand{\1}{\mathbf{1}}
\newcommand{\E}{{\mathbb E}}
\definecolor{darkblue}{rgb}{0,0,0.7} 
\definecolor{darkred}{rgb}{0.9,0.1,0.1}
\definecolor{darkgreen}{rgb}{0,0.5,0}
\newcommand{\red}[1]{{\color{darkred}{#1}}}
\title{Correction to  Doi Type  Models  for Suspensions} 
\author{David G\'erard-Varet\thanks{david.gerard-varet@imj-prg.fr, Institut de Mathématiques de Jussieu -- Paris Rive Gauche, Université de Paris, 8 Place Aurélie Nemours, 75013 Paris, France}, Richard M. H\"ofer\thanks{richard.hoefer@ur.de, Faculty of Mathematics, University Regensburg, Universit\"atsstraße 31, 93053 Regensburg, Germany.}}
\begin{document}
\maketitle

\begin{abstract}
    Starting from  microscopic $N$ particle systems, we study the derivation of Doi type models for suspensions of non-spherical particles in Stokes flows. While Doi models  accurately describe the effective  evolution of the  spatial particle  density to the first order in the particle volume fraction, this accuracy fails regarding the evolution of the particle orientations. We rigorously attribute this failure to the singular interaction of the particles via a $-3$-homogeneous kernel. In the situation that the particles are initially distributed according to a stationary ergodic point process, we identify the limit of this singular interaction term. It consists of two parts. The first corresponds to a classical term in Doi type models. The second new term depends on the (microscopic) $2$-point correlation of the point process. By including this term, we  provide a modification of the Doi model that is accurate to first order in the particle volume fraction.
\end{abstract}

{\small 
\noindent Keywords: Stokes equations,  interacting particle system, homogenization, stochastic two-scale convergence.

\smallskip

\noindent MSC:  35Q70, 76D07, 76M50, 76T20
}

\tableofcontents

\section{Introduction}  
Popular multiscale models for inertialess axisymmetric particles in viscous flows are the so called Doi type models \cite[Section 8]{DoiEdwards88}. They are coupled kinetic/fluid partial differential equations, used for the description of various phenomena such as viscoelasticity, mixing in active suspensions, or phase transition in liquid crystals. We consider in the following the  simplest case of a dilute suspension of  neutrally buoyant, non-Brownian and passive particles  inside a Stokes flow, but the model can be adapted to other situations.
For a small particle volume fraction $\phi$, this model takes the form
\begin{align} \label{limit.nonlinear}
\begin{aligned}
\partial_t  \bar f_\phi +  \bar u_\phi \cdot \nabla_x \bar f_\phi+ \dv_\xi\Big(\big(M(\xi)\nabla  \bar u_\phi\big)\xi \bar f_\phi\Big) = 0\\
    \bar f_\phi(0,\cdot) = f^0, \\
    - \dv \Big(\big(2 + \phi  \mV[\bar f_\phi]\big)  D \bar u_\phi\Big) + \nabla \bar p_\phi = g, \qquad \dv  \bar u_\phi = 0.
    \end{aligned}
   \end{align}
Here, $\bar f_\phi(t,x,\xi)$ is the density of particles at time $t \geq 0$, space position $x \in \R^3$ and orientation $\xi \in \S^2$ and  $\bar u_\phi(t,x),\bar p_\phi(t,x)$ are the fluid velocity and pressure. Moreover, $g$ is a given external force field and $f^0$ a given initial datum for the particle density.
Furthermore, $D \bar u_\phi$ denotes the symmetric gradient of $\bar u_\phi$, and 
\begin{align} \label{sigma}
\mV[h] = \int_{\S^2} S(\xi)  h \dd \xi,  
\end{align}
where $ S(\xi) \in \mathcal L(\sym_0(3),\sym_0(3))$ is the linear map that determines the moment of stress 
 of  an isolated particle in a Stokes flow with prescribed strain and rotation at infinity. Finally, $M(\xi)$ is the rotational particle mobility under a given strain. More precisely, $M(\xi) \in \mathcal L(\R^{3\times3}_0,\skew(3))$ is the linear map that determines the angular velocity (identified with a skew symmetric matrix) of an isolated particle in a Stokes flow with prescribed strain and rotation at infinity. We refer to Subsection \ref{sec:single} for the precise definition of $S,M$.

    The system \eqref{limit.nonlinear} couples the convection of the particles to a Stokes equation with an effective viscosity, increased by $\phi \mV[\bar f_\phi]$ compared to the solvent viscosity. We will study its derivation -- and its failure and suitable corrections --  from a microscopic monodispersed system of  $N$ particles inside a Stokes flow.
    
\medskip  
We model the particles as follows. We introduce  $\mathcal B \subset B_1(0)$ a reference particle, which is a smooth compact and connected set with rotational symmetry, i.e.
\begin{align}\label{rod_reference}
	R \mB = \mB \quad \text{for  all } R \in SO(3) \text{ with } R e_3 =e_3, \text{where $e_3 = (0,0,1) \in \R^3$.}
\end{align}
Let $N \in \N$ and $X_i \in \R^3$, $\xi_i \in \S^2$, $1 \leq i \leq N$, and $r > 0$. The $i$-th particle is then given by  
$$\mB_i = X_i +  r R_{\xi_i} \mB$$
where the rotation matrix $R_{\xi_i} \in SO(3)$ is chosen such that $R_{\xi_i} e_3 = \xi_i$.

For a given external force field $g$, we then consider the fluid equations
\begin{equation} \label{main}
\begin{aligned}
 -\Delta u_N + \na p_N  = g, \quad \div u_N = 0, & \qquad \text{in } \R^3\setminus \cup_i \mB_i \\ 
u_N(x) \to 0  &\qquad \text{as } |x| \to \infty\\
D u_N\vert_{\mB_i}  = 0 &\qquad \forall i, \\
-\int_{\pa \mB_i} \sigma(u_N)n \dd s(x) = \int_{\mB_i} g \dd x & \qquad \forall i\\
- \int_{\pa \mB_i} \sigma(u_N)n  \times (x-X_i) \dd s(x) = \int_{\pa \mB_i} g \times (x - X_i) \dd x & \qquad \forall i.
 \end{aligned}
\end{equation}
Here $\sigma(u_N) = 2 Du_N  - p_N\Id$ denotes the fluid stress and $n$ is the outer normal of $\mB_i$, i.e. the inner normal with respect to the fluid domain. The first two equations in \eqref{main} model a Stokes fluid driven by $g$ outside the particles. The condition $D u_N = 0$ in $\mB_i$ expresses that $u_N$ is a rigid body motion inside the particles, i.e. $u_N(x) = u_N(X_i) + \frac 1 2 \curl u_N(X_i) \times (x-X_i) $ for all $x \in \mB_i$. The unknown translation and angular velocities  $u_N(X_i)$ and $\frac 1 2 \curl u_N(X_i)$ are determined implicitly  by the last two equations in \eqref{main}, corresponding respectively to the balance of forces and the balance of torques.

It is classical, that for $g \in L^{6/5}(\R^3)$ this problem admits a unique weak solution $(u_N, p_N) \in \dot H^1(\R^3) \times L^2(\R^3)/\R$, provided the particles satisfy the
non-overlapping condition
\begin{align} \label{non.overlapping}
    \mB_i \cap \mB_j \neq \emptyset
\end{align}
This is in particular satisfied for 
\begin{align} \label{non.overlapping.2}
    \dmin := \min_{i \neq j} |X_i - X_j| > 2r.
\end{align}
We now specify the particle evolution: for given initial particle positions and orientations $(X_i^0,\xi_i^0)$, it is given through
\begin{align}
    \frac {\d} {\d t} X_i(t) = u_N(t,X_i(t)), &\qquad X_i(0) = X_i^0,  \label{x.dot}\\
    \frac {\d} {\d t} \xi_i(t) = \frac 1 2  \curl u_N(t,X_i(t)) \times \xi_i, &\qquad \xi_i(0) = \xi_i^0, \label{xi.dot}
\end{align}
where $u_N(t,\cdot)$ is the solution to \eqref{main} with particle positions and orientations $X_i(t),\xi_i(t)$. We also define 
\begin{align}
    \phi := N r^3
\end{align}
which is proportional to the solid volume fraction.  We emphasize that all the quantities $X_i^0,\xi_i^0$, $\mB_i$, $r$, $\phi$ implicitly depend on $N$.
As in most previous works on close topics, see Subsection \ref{subsec_previous}, we  will assume strong separation of the particles and diluteness of the suspension:   
\begin{align}
     \exists c > 0 ~ \forall N \in \N ~ \dmin(0) \geq c N^{-1/3}, 
    \label{ass:separation} \tag{H1} \\
    \phi = o\big(1/(\log N)^2\big) 
     \label{ass:diluteness} \tag{H2}, 
\end{align}
We remark that assumptions \eqref{ass:separation}--\eqref{ass:diluteness} imply that
\begin{align}
    \frac{r}{\dmin(0)} \to 0.
\end{align} 
In particular, the non-overlapping condition \eqref{non.overlapping.2} is initially satisfied for sufficiently large $N$.
We further assume that initially  the particles uniformly lie in a bounded set $\mathcal O \subset \R^3$,  i.e.
\begin{align}
   \forall N \in \N, ~ \forall 1 \leq i \leq N, ~ X^0_i \in \mathcal O \label{ass:bounded.cloud} \tag{H3}.
\end{align}

By standard arguments, the dynamics is well-defined at least until the first collision of particles. 

\medskip
In the present work, we are interested in the behavior of the particle system as $N \to \infty$ and $r \to 0$, more precisely in the asymptotics of its empirical measure $f_N(t,\cdot) \in \mP(\R^3 \times \S^2)$ defined as
\begin{equation} \label{empirical_measure}
    f_N(t,\cdot) = \frac 1 N \sum_{i=1}^N \delta_{X_i(t),\xi_i(t)}, 
\end{equation}
with {$f_N(0, \cdot) = f_N^0 = \frac 1 N  \sum_{i=1}^N \delta_{X_i^0,\xi_i^0.}$
Our goal is to provide an approximation $f_\phi$ of $f_N$ which is accurate at first order in the particle volume fraction, that is with an error $o(\phi)$.  Surprisingly, although it contains $O(\phi)$ terms, the Doi system \eqref{limit.nonlinear} is not appropriate, and our main purpose is to provide a modified, more accurate version of the model.

\subsection{Previous results} \label{subsec_previous}

    The derivation of continuous models for suspensions, of the kind of \eqref{limit.nonlinear}, has been the matter of intense research over the last years. The derivation of a Stokes equation with effective viscosity, like  (\ref{limit.nonlinear}b),  is mathematically  well understood when the particle configuration of the microscopic suspension model is given, see \cite{NiethammerSchubert19, HillairetWu19, Gerard-VaretHoefer21}. In particular, the results in \cite{HillairetWu19} imply the formula \eqref{sigma} for the increased viscosity to leading order in $\phi$.
    Higher order expansions of the effective viscosity have been studied in \cite{Gerard-VaretHillairet19, Gerard-Varet21, Gerard-VaretMecherbet20, DuerinckxGloria20}. We point out that the study of the $O(\phi^2)$ correction to the effective viscosity depends on analyzing limits of binary interactions  with kernel $\nabla^2 G$, where $G$ is the fundamental solution of the Stokes equation (Oseen tensor). We shall face a similar difficulty here.


    \medskip 
    
    The rigorous derivation of the fully coupled problem  \eqref{limit.nonlinear} is much more challenging than the stationary problem. In particular, all presently known results are perturbative,  in the sense that the volume fraction $\phi$ is assumed to vanish asymptotically as the number of particles tends to infinity. Moreover, a stringent separation condition like \eqref{ass:separation} is assumed. For spherical buoyant particles, the transport-Stokes equations have been derived in \cite{Hofer18MeanField, Mecherbet19, HoferSchubert23} as a mean-field limit. Due to the buoyancy, the leading order effect of the particles on the fluid in \cite{Hofer18MeanField, Mecherbet19, HoferSchubert23} is an additional effective force in the Stokes equations proportional  to the particle density. This effect in particular dominates over the increase of viscosity, which has been captured in the same setting in \cite{Hofer&Schubert} by obtaining improved convergence rates. 

The dynamic problem for nonspherical particles has been studied in \cite{Duerinckx23}. Apart from possible additional active selfpropulsion of the particles and a possible small buoyancy, the microscopic setting in \cite{Duerinckx23} is the system \eqref{main}.
In \cite{Duerinckx23},  the author shows that under assumptions \eqref{ass:separation} and \eqref{ass:diluteness} and convergence of the initial data $f_N^0 \wto f^0$, the empirical measure $f_N$  converges to the solution of
\begin{align} 
    \partial_t f + u \cdot \nabla_x f + \dv_\xi((M(\xi)\nabla u) \xi f) = 0,  \label{eq:f}\\
    f(0,\cdot) = f^0, \label{eq:f^0}\\
    - \Delta u + \nabla p = g, \qquad \dv u = 0. \label{u}
\end{align}
More precisely, assuming the empirical measure 
satisfies 
\begin{align} 
\mathcal W_\infty(f_N^0,f^0) \to 0 
\end{align}
\cite[Proposition 5.2]{Duerinckx23} implies that for given $T>0$ we have (cf. \cite[Equation (5.22)]{Duerinckx23})
\begin{align}
\dmin(t) \geq \dmin(0) e^{-Ct}
\end{align}
and
\begin{align} \label{Mitia.0}
	\mathcal W_\infty(f_N(t),f(t)) \leq C (\mathcal W_\infty(f_N^0,f^0) + \phi \log N)
\end{align} 
where $C$ only depends on $T$, $f^0$ and the constant from \eqref{ass:separation}.

Moreover,  \cite[Theorem 1.3]{Duerinckx23} asserts that the error for the \emph{spatial} density and the fluid velocity can be improved by comparing to the system \eqref{limit.nonlinear}.
More precisely, 
\begin{align} \label{Mitia}
	\mathcal W_\infty(\rho_N, \bar \rho_\phi) \leq C (\mathcal W_\infty(f_N^0,f^0) + \phi^2 |\log \phi| \log N),
\end{align}
and a similar error for $u_N - \bar u_\phi$ in suitable norms.
The spatial densities are defined as  $\rho_N = \int_{\S^2} f_N \dd \xi, \bar \rho_\phi = \int_{\S^2} \bar f_\phi \dd \xi$. We emphasize once more that $\phi$ vanishes as $N \to \infty$ due to assumption \eqref{ass:diluteness}. 

These results, \eqref{Mitia.0}--\eqref{Mitia} do not provide any information on how particle interaction through the fluid affects the evolution of their orientations. On the one hand, \eqref{Mitia.0} only gives a comparison to the leading order effective system \eqref{eq:f}--\eqref{u}, which does not involve any interaction of the particles through the fluid as  the fluid equation \eqref{u} does not depend on $f$. On the other hand, estimate \eqref{Mitia} does not provide any improvement over \eqref{Mitia.0} regarding the evolution of the particle orientations. One could for instance obtain the same $O(\phi^2)$ approximation for the density replacing \eqref{limit.nonlinear} by the intermediate system  
\begin{align*} 
\begin{aligned}
\partial_t f^{int}_\phi + u^{int}_\phi \cdot \nabla_x  f^{int}_\phi + \dv_\xi((M(\xi)\nabla u)\xi f^{int}_\phi) = 0\\
     f^{int}_\phi(0,\cdot) = f^0, \\
    - \dv((2 + \phi  \mV[f^{int}_\phi])  D u^{int}_\phi) + \nabla p^{int}_\phi = g, \qquad \dv u^{int}_\phi = 0.
    \end{aligned}
\end{align*}
that is replacing $\bar u_\phi$ by $u$ in the expression for the angular velocity.

\medskip
This limitation of Doi type systems was emphasized in the recent paper \cite{HoferMecherbetSchubert22}, dealing with a  closely related toy model. The authors proved that no mean-field limit exists that captures the dynamics to order $O(\phi)$. More precisely, they built two initial configurations of the microscopic problem $f_N^0, \tilde f_N^0$ such that $\mathcal W_\infty(f_N^0,\tilde f_N^0) = o(\phi)$ but, for fixed $t > 0$, $\mathcal W_\infty(f_N(t),\tilde f_N(t)) \geq O(\phi)$.

\medskip
This shows that \eqref{limit.nonlinear} is not appropriate if one wants to describe phenomena where the orientation of the particles plays a role (mixing phenomena, phase transition \dots). The main input of the present paper is to derive a substitute to \eqref{limit.nonlinear}  when the initial distribution of the particles is random stationary.


\subsection{Outline of the paper} \label{subsec:outline}

The statements of our main results are contained in Section \ref{main.results}. They culminate in Theorem \ref{thm.f-f_phi}, which applies to particle configurations that are initially generated by a stationary ergodic point process. Roughly, for such configurations, we prove that there exists a deterministic matrix field $B = B(t,x,\xi)$ such that the local interaction between particles is captured at first order in $\phi$ by the model: 
\begin{align} \label{f_phi}
        \partial_t f_\phi + u_\phi \cdot \nabla f_\phi + \dv_\xi(M \nabla u_\phi \xi f_\phi  + \phi M B \xi f_\phi) &= 0  \\
        f_\phi(0) &= f^0  
    \end{align}
Here, $u_\phi$ is essentially the same as $\bar u_\phi$, see \eqref{u_phi}. We show an estimate of the form $\|f_N - f_\phi\| = o(\phi)$ in some negative Sobolev space. Note that this $o(\phi)$ approximation holds in a weaker topology than the one induced by the Wasserstein distance, so that there is no contradiction with the result in \cite{HoferMecherbetSchubert22} alluded to above. See also Remark \ref{rem_thm_2.6}. 

\medskip
The proof of Theorem \ref{thm.f-f_phi} requires several steps of different mathematical flavors. 

\smallskip
\textbf{Step 1, Subsections \ref{sec:MOR} and \ref{subsec:proof_deterministic1}.} We show that the empirical measure $f_N$ in  \eqref{empirical_measure}, associated with the microscopic system \eqref{main}-\eqref{x.dot}-\eqref{xi.dot}, is $o(\phi)$ close to the solution $f_N^{app}$ of an intermediate $N$-particle system in the infinite Wasserstein distance. In this intermediate system, the spatial particle positions are transported by the velocity $\bar u_\phi$ in (\ref{limit.nonlinear}b), while the orientations are changed according to the zero order fluid gradient $\nabla u$ plus a singular binary interaction term between the particles involving the $-3$-homogeneous function $\nabla^2 G$, where $G$ is the Oseen tensor. See \eqref{X^app}-\eqref{xi^app} and Theorem \ref{th:app} for a precise statement. The key ingredient in the proof of Theorem \ref{th:app} is the method of reflections for non-spherical particles, which  allows to derive an accurate approximation of the solution $u_N$ of \eqref{main}.  

In view of Theorem \ref{th:app}, the main issue for an accurate description of the particle orientations to first order in $\phi$ is to understand the  singular  binary interaction: since the singularity is critical, the interaction of  very close pairs of particles contributes significantly to the dynamics and therefore no mean-field limit is expected to describe well the limiting behavior of particle orientations to order $O(\phi)$ in the usual Wasserstein metric. In particular, the model \eqref{limit.nonlinear} does not give an effective description to order $O(\phi)$.

\smallskip
\textbf{Step 2, Subsection \ref{subsec:proof_deterministic2}.} To tackle the limit of the aforementioned $-3$-homogeneous binary interaction term, we re-express it in terms of the energy of some special Stokes flows, forced by smeared out dipoles, see Theorem \ref{theo2}. This approach, pioneered by S. Serfaty and her co-authors for the analysis of Coulomb gases \cite{Serfaty15}, was already used in \cite{Gerard-VaretHillairet19} for the analysis of the effective viscosity of suspensions of spheres. We adapt it here to the case of non-spherical particles.  We stress that Steps 1 and 2 are purely deterministic. They are treated in Section \ref{sec_determin}. 

\smallskip
\textbf{Step 3, Subsection \ref{subsec:proof_random1}.} To push the analysis further, we assume that, initially, the particle configuration is generated by a stationary ergodic point process. Under this assumption, using the expression of the binary interaction derived in the previous step, we show  (cf. Theorem \ref{theo3}) that it converges almost surely as $N \rightarrow +\infty$ and that the limit is deterministic. The proof of Theorem \ref{theo3} relies on homogenization theory,  already encountered in \cite{Gerard-VaretHillairet19,DuerinckxGloria21,DuerinckxGloria20}. However, we face here the extra-difficulty that  
 the stationarity of the particles configuration, satisfied at $t=0$, is not preserved at positive times. We manage to overcome this problem by adapting the theory of quenched two-scale convergence developed in \cite{ZhiPia2006}. 

\smallskip
\textbf{Step 4, Subsections \ref{sec:correlation} and \ref{subsec:proof_final}.}
Once we know the limit of the binary interaction term exists, we look for its formula. Under a mild mixing assumption on the initial point process, we show in  Theorem \ref{cor1} that this limit can be expressed with continuous quantities, namely the $2$-point correlation function of the process and the flow of $u$, \emph{cf.} \eqref{u}.  This allows to derive the appropriate correction to the Doi model, that is the appropriate field $B$ in \eqref{f_phi}. Finally, we conclude the proof of Theorem \ref{thm.f-f_phi}, by proving the $o(\phi)$ estimate on $f_N - f_\phi$.

\subsection{The tensors \texorpdfstring{$M(\xi)$}{M} and \texorpdfstring{$S(\xi)$}{S}} \label{sec:single}
We provide here the definitions of the mobility and stress tensors $M$ and $S$ involved in \eqref{limit.nonlinear} and subsequent systems.
Let $A \in \R^{3\times 3}_0$ be a tracefree matrix. Recall that for $\xi \in \S^2$, we denote by $R_{\xi_i} \in SO(3)$ a rotation matrix  i such that $R_{\xi} e_3 = \xi$. We consider the Stokes problem
\begin{equation} \label{eq:resistance.problem}
\left\{
\begin{array}{ll}	-  \Delta v_A + \nabla p_A = 0, \quad \dv v_A = 0 &\quad  \text{in } \R^3 \setminus (R_\xi\mB), \\
	D v_A = - A_{sym}  & \quad \text{in } R_\xi\mB, \\
    \int_{\partial (R_\xi \mB)} \sigma(v_A) n = 0 = \int_{\partial (R_\xi\mB)} x \times \sigma(v_A) n  , \\
    v_A(x) \to 0 & \quad \text{as } |x| \to \infty
	\end{array} \right.
\end{equation}
Note that on  $R_\xi\mB$ we have $v_A(x) =  - A_{sym} x + S_A x + V_A$ for some $S_A \in \skew(3), V_A \in \R^3$.
Then, the map 
$$M(\xi) \colon \R^{3\times 3}_0 \to \skew_0(3)$$ is defined as
\begin{align}
    M A = A +\nabla v_A(0) \label{def.M}
\end{align}
Note that  $\nabla v_A(0) = \fint_{ R_\xi\mB} \nabla v_A \dd y.$

\medskip

The map $S(\xi) \colon \sym_0(3) \to \sym_0(3)$ is defined by 
\begin{align}
    S A = \int_{\partial (R_\xi \mB)} x \otimes^0_s \sigma(v_A+ Ax,p_A) n \dd x =  \int_{\partial (R_\xi \mB)} x \otimes^0_s \sigma(v_A,p_A) n \dd x + 2 A |\B| ,
\end{align}
where $\otimes^0_s$ stands for the tracefree symmetrized tensor product, i.e.
\begin{align}
    a \otimes_s^0 b := \frac 1 2 (a \otimes b + b \otimes a) - \frac 1 3 a \cdot b \, \Id
\end{align}

\medskip

We remark that both $M$ and $S$ are smooth maps on $\S^2$.

\section{Main results} \label{main.results}

\subsection{Deterministic results}

We introduce $u_\phi \in \dot H^1(\R^3)$ as the solution to
\begin{align} \label{u_phi}
    - \Delta u_\phi + \nabla p_\phi = g + \phi \dv(\mV[f]  D u) , \qquad \dv u_\phi = 0,
\end{align}
where $(f,u)$ is the solution to \eqref{eq:f}--\eqref{u}.
 Note that the difference between $u_\phi$ and $\bar u_\phi$ (from \eqref{limit.nonlinear}) is of order  $\phi^2$. Hence, replacing  $\bar u_\phi$ by $u_\phi$ does not affect our results as they all deal with effects of order $\phi$. 
 We refer to Theorem \ref{th:well-posedness} for the well-posedness of the systems \eqref{eq:f}--\eqref{u} and \eqref{eq:f}--\eqref{u_phi}, respectively.


%

In our first result we compare $f_N$ with $f_N^{app}$ which is defined as the empirical measure associated with the ODEs
\begin{align} \label{X^app}
& \dot X^{app}_i  = u_\phi(X_i^{app} ),   \\ \label{xi^app}
& \dot \xi^{app}_i  = M(\xi_i^{app} ) \bigg(\nabla u(X_i^{app} ) + \frac{\phi}{N}\sum_{j \neq i} \nabla^2 G(X_i^{app} - X_j^{app} ) \Big(S(\xi_j^{app} ) D u (X_j^{app} )\Big) \bigg)  \xi_i^{app}, \\
& (X_i^{app}(0),\xi_i^{app} (0)) = (X_i^0, \xi_i^0)
\end{align}
Here, $G = (G_{i,j})_{1 \le i,j \le 3}$ is the Stokes fundamental solution (or Oseen tensor), that solves 
$$ -\Delta G + \na P = \delta_0 \textrm{Id}, \quad \div G = 0 \quad \text{in } \R^3,  $$
with $\textrm{Id}$ the $3 \times 3$ identity matrix. It is explicitly given by 
$$ G(x) = \frac{1}{8\pi} \Big( \frac{\textrm{Id}}{|x|} + \frac{x \otimes x}{|x-y|^3}\Big)   $$
while 
$$ P(x) = \frac{1}{4\pi} \frac{x}{|x|^3}. $$ 
The  notation $\na^2 G  A \in \R^{3\times 3}$
for a  matrix $A \in \R^{3\times 3}$ refers to the $3\times 3$ matrix defined by 
$$ (\na^2 G  A)_{ij} = \pa_j \pa_l G_{ik} A_{kl}, \quad \text{ that is} \: \na^2 G  A =   \na \big( (A\na) \cdot G\big) $$  
\begin{theorem} \label{th:app}
Let $q>3$, $g \in W^{1,q}(\R^3) \cap L^1(\R^3)$ and  $f^0 \in \mP(\R^3 \times \S^2)$ satisfying 
\begin{itemize}
\item either $f^0 \in W^{1,q}(\R^3 \times \S^2)$
\item or $f^0(x,\xi) = h^0(x,\xi) \1_{\mO}(x)$ for some bounded  $C^{1,\alpha}$ domain $\mO$, $\alpha \in (0,1)$, and some $h^0\in C^1(\R^3 \times \S^2)$.
\end{itemize}
    Assume that \eqref{ass:separation} and \eqref{ass:diluteness} hold and that $\mathcal W_\infty(f_N^0,f^0) \to 0$. Let $T>0$. 
     Then, there exist $N_0,C> 0$, depending only on $T$, $g$, $f^0$ and $c$ from \eqref{ass:diluteness} such that for  all $t \leq T$ and $N \geq N_0$
     \begin{align} \label{W_infty.app}
         \mathcal W_\infty(f_N(t),f_N^{app}(t)) \leq C (\mathcal W_\infty(f_N^0,f^0) + \phi^{4/3} + \phi^2 |\log \phi| \log N), \\
       \frac 1 C |X_i^0 - X_j^0|  \leq |X_i - X_j| \leq  C |X_i^0 - X_j^0| \label{dmin.control.thm},
     \end{align}
     and \eqref{dmin.control.app} also holds for $X_i^{app}$ and $X_j^{app}$.
\end{theorem}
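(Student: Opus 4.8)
The plan is to compare the microscopic dynamics \eqref{x.dot}--\eqref{xi.dot} with the approximate ODE system \eqref{X^app}--\eqref{xi^app} via a Gronwall argument on $\mathcal W_\infty$, the key analytic input being an accurate pointwise expansion of the true fluid velocity $u_N$ and its gradient at the particle centers. First I would set up the method of reflections for the non-spherical particle system \eqref{main}: write $u_N = u + \sum_i w_i + (\text{error})$, where $u$ solves the unperturbed Stokes problem \eqref{u} and each $w_i$ corrects for the $i$-th particle in the ambient field $u + \sum_{j\neq i} w_j$. Because $Du$ restricted to $\mB_i$ is (to leading order) a constant strain, the leading reflection $w_i$ is, away from $\mB_i$, well-approximated by the Stokeslet-dipole field generated by the single-particle stress tensor $S(\xi_i)Du(X_i)$ against the kernel $\nabla G$; its gradient at another center $X_j$ is then $\nabla^2 G(X_j - X_i)(S(\xi_i)Du(X_i))$, which is exactly the interaction term in \eqref{xi^app}. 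The rotational response of particle $i$ to the ambient gradient is encoded by $M(\xi_i)$ by definition \eqref{def.M}, which converts the ambient $\nabla u_N(X_i)$ into the angular velocity $\frac12 \curl u_N(X_i)\times \xi_i$. One also needs to replace $u$ by $u_\phi$ in the translational equation \eqref{X^app}: this accounts for the collective $O(\phi)$ feedback of all the dipoles onto the spatial flow, which is precisely the extra forcing $\phi\,\dv(\mV[f]Du)$ in \eqref{u_phi}; here the results of \cite{HillairetWu19} (or the stationary homogenization estimates cited in Subsection \ref{subsec_previous}) give the needed $o(\phi)$ control of $u_N - u_\phi$ in the relevant norms.

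Next I would quantify the errors in this expansion. Three sources must be controlled: (i) the truncation error of the method of reflections, which converges geometrically with ratio $\sim r/\dmin \sim \phi^{1/3}$ per reflection, summing to the $O(\phi^{4/3})$ and $O(\phi^2|\log\phi|\log N)$ contributions (the $\log N$ factors come, as in \cite{Duerinckx23}, from summing $\sum_{j\neq i}|X_i-X_j|^{-2}$ over an $O(N)$-point configuration with spacing $N^{-1/3}$); (ii) the error in replacing the exact single-particle reflection by its far-field dipole, controlled by Taylor expansion of $Du$ over $\mB_i$ (using $g\in W^{1,q}$, $q>3$, for $C^{1,\alpha}$ regularity of $u$) and by the decay of the higher multipoles; and (iii) the error in replacing $u$ by $u_\phi$ just discussed. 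The two regularity alternatives on $f^0$ are there to guarantee enough regularity of $u$ (hence of the flow maps) so that the Wasserstein comparison closes: the $W^{1,q}$ alternative gives it directly, while the indicator-of-a-$C^{1,\alpha}$-domain alternative is the one relevant for later sections, where one tracks the boundary of the particle cloud.

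Then I would run the Gronwall estimate. Couple each particle $i$ to its approximate counterpart through the optimal $\mathcal W_\infty$ coupling at time $0$ (which is the identity up to relabeling, since both systems start from the same $(X_i^0,\xi_i^0)$). Differentiating $\max_i(|X_i - X_i^{app}| + |\xi_i - \xi_i^{app}|)$, the right-hand side is bounded by a Lipschitz term $\le C\max_i(|X_i-X_i^{app}|+\dots)$ coming from the Lipschitz regularity of $u_\phi$, $\nabla u$, $M$, $S$ and $\nabla^2 G$ evaluated at separated points, plus the consistency error $o(\phi)$ established above; Gronwall then yields \eqref{W_infty.app} on $[0,T]$. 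The distance-control estimate \eqref{dmin.control.thm} is obtained simultaneously: since $\dot X_i - \dot X_j = u_\phi(X_i) - u_\phi(X_j)$ with $u_\phi$ uniformly Lipschitz (uniformly in $N$ and $\phi$, by the regularity theory for \eqref{u_phi}), $|X_i - X_j|$ cannot grow or shrink faster than $e^{\pm Ct}$, which gives the two-sided bound; the same argument applied to the approximate system gives \eqref{dmin.control.app}. One must also check a bootstrap consistency: the separation \eqref{ass:separation} must be shown to persist on $[0,T]$ (so that the reflection expansion remains valid and particles do not collide), which follows from the lower bound in \eqref{dmin.control.thm} combined with \eqref{ass:separation} at $t=0$, closing the loop.

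\medskip
\noindent\textbf{Main obstacle.} The hardest part is step (i)–(ii): obtaining a \emph{sharp} approximation of $u_N$ — sharp enough that the residual after extracting the dipole term is genuinely $o(\phi)$ uniformly in the particle centers, not merely $O(\phi)$. This is where the method of reflections for non-spherical particles must be pushed carefully: one needs uniform $\dot H^1$ and weighted $L^\infty$ bounds on each reflection, control of the dipole approximation in terms of the particle geometry, and a summation over $j\neq i$ of the $-3$-homogeneous kernel that is only barely convergent (hence the $\log N$), so the diluteness hypothesis \eqref{ass:diluteness} in the form $\phi=o(1/(\log N)^2)$ is used essentially to beat the $\phi^2(\log N)$ term down to $o(\phi)$. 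Everything downstream (Steps 2–4 of the outline) treats the resulting singular binary interaction term in \eqref{xi^app} as the object of study, so the whole paper hinges on this expansion being both explicit and quantitatively tight.
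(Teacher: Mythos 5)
Your overall architecture — method of reflections to expand $u_N$, consistency error identifying the dipole term $\na^2 G(X_i-X_j)S(\xi_j)Du(X_j)$, then Gronwall on $\mathcal W_\infty$ — is the same as the paper's (Proposition~\ref{pro:MOR}, Theorem~\ref{th:zero.order}, then Steps 1--5 in Subsection~\ref{subsec:proof_deterministic1}). However, two of your steps have genuine gaps.

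First, your derivation of the two-sided control \eqref{dmin.control.thm} for the \emph{true} system is wrong. You write ``$\dot X_i - \dot X_j = u_\phi(X_i) - u_\phi(X_j)$ with $u_\phi$ uniformly Lipschitz,'' but the true dynamics is $\dot X_i = u_N(t,X_i)$, not $u_\phi(X_i)$ (only the approximate system moves with $u_\phi$). The global Lipschitz constant of $u_N$ is not uniformly controlled; near particle boundaries $\na u_N$ has large gradients. What is actually needed is the \emph{discrete} Lipschitz bound $|u_N(X_i) - u_N(X_j)| \lesssim |X_i - X_j|$ at particle centers, which is \eqref{u_N.Lipschitz} in Proposition~\ref{pro:MOR} and is itself a nontrivial product of the method of reflections (it comes from \eqref{nabla.u-u_N}, i.e.\ the difference estimate for $(u_N-u)(X_i)-(u_N-u)(X_j)$). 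Without this, you cannot close the bootstrap that keeps $\dmin(t)\gtrsim N^{-1/3}$ and justifies the reflection expansion on $[0,T]$.

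Second, and more seriously, your single Gronwall on $\max_i(|X_i - X_i^{app}| + |\xi_i - \xi_i^{app}|)$ with ``Lipschitz regularity of $\na^2 G$ at separated points'' would not close. The naive bound
$$\frac{\phi}{N}\sum_{j\neq i}\bigl|\na^2 G(X_i-X_j) - \na^2 G(X_i^{app}-X_j^{app})\bigr| \lesssim \phi S_4 \sup_i |X_i - X_i^{app}|$$
has prefactor $\phi S_4 \sim \phi\, N^{1/3}$, which under \eqref{ass:separation}--\eqref{ass:diluteness} is \emph{not} uniformly bounded (e.g.\ take $\phi = (\log N)^{-3}$). The resulting exponential in Gronwall blows up. The paper circumvents this by Step 3: it separately controls the \emph{relative} displacement $|X_i - X_i^{app} - (X_j - X_j^{app})| \lesssim r + \phi\log N\,|X_i^0 - X_j^0|$ (equation \eqref{relative.distance}), using the Lipschitz of $u_\phi$ and the discrete difference estimate \eqref{nabla.u-u_N}. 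Then the mean-value bound on $\na^2 G$ in terms of the relative displacement, together with $|X_i^0-X_j^0|\lesssim|X_i-X_j|$, turns the sum into $\phi r S_4 + \phi^2\log N\, S_3 \lesssim \phi^{4/3}+(\phi\log N)^2$, which is $o(\phi)$ precisely because of \eqref{ass:diluteness}. Your proposal has no analogue of this relative-displacement control, and without it the estimate \eqref{W_infty.app} cannot be reached.

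Minor quibbles: the reflection contraction factor is $\phi S_3 \sim \phi\log N$ (not $r/\dmin\sim\phi^{1/3}$, which is a different per-particle length-scale ratio); and the $\log N$ comes from the critical $-3$-homogeneous sum $\sum_{j\neq i}|X_i-X_j|^{-3}$, not $\sum|X_i-X_j|^{-2}$ (the latter gives $N^{1/3}$, not a logarithm). Also note that the paper does the translation estimate (Step 2) and the orientation estimate (Step 4) as \emph{separate} Gronwall arguments, first closing the spatial one (which does not involve $\na^2 G$) and then feeding its output into the orientation one; lumping them together as you do would make the orientation singularity contaminate the spatial estimate.
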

This result should be compared with the approximation result \eqref{Mitia} established in \cite{Duerinckx23}. The bound \eqref{Mitia} can be directly deduced from our proof of \eqref{W_infty.app}, more specifically from \eqref{app.positions.1}. In particular, the  error term $\phi^{4/3}$ in \eqref{W_infty.app} only arises from the approximations of the orientations (cf. \eqref{u_N-u.xi.1st.order} and \eqref{S_4.est}), and does not show up in an estimate for the spatial density. Conversely, it is  likely that the method used in \cite{Duerinckx23} would provide a bound similar to \eqref{W_infty.app}. Here, in order to derive an accurate approximation of the fluid velocity $u_N$, and rather than the cluster expansion of \cite{Duerinckx23}, we consider the method of reflections,  used for spherical particles in \cite{Hofer18MeanField} and for nonspherical particles in \cite{HillairetWu19, HoeferLeocataMecherbet22}. However, compared to \cite{HillairetWu19, HoeferLeocataMecherbet22} we need some additional pointwise estimates that we develop in Subsection \ref{sec:MOR}. From those estimates, we first obtain through a bootstrap argument an estimate for $\mathcal W_\infty(f_N,f)$ (cf. Theorem \ref{th:zero.order}), where $f$ is the solution to the effective system \eqref{eq:f}. We remark that compared to the sedimentation problems considered in \cite{Hofer18MeanField, Mecherbet19, Hofer&Schubert}, this step is easier as there is no mean-field effect involved in the zero-order effective system \eqref{eq:f}--\eqref{u}. We then conclude with the estimate \eqref{W_infty.app} in Subsection \ref{subsec:proof_deterministic1}.

\medskip 
In \eqref{xi^app}, the term 
\begin{align}
M(\xi_i^{app}) \bigg( \frac 1 N \sum_{j \neq i}   \nabla^2 G(X_i^{app} - X_j^{app}) \Big(S(\xi_j^{app}) D u (X_j^{app})\Big) \bigg)\xi_i^{app}
\end{align} is more singular than usual mean field terms, due to the critical singularity of $\nabla^2 G$. As we shall see, it depends strongly on the microstructure and does not converge to its naive mean field limit
\begin{align}
M(\xi_i^{app}) \bigg( \nabla^2 G \ast  \int S(\xi) f \dd \xi D u\bigg)(X_i^{app})  \xi_i^{app} 
\end{align}
The analysis of this term is the key feature of our paper.  Passing to the distributional formulation of the Vlasov type equation solved by $f_N^{app}$, it comes down to understanding the limiting behavior of the objects
\begin{align} \label{double.sum.original}
 \frac 1 {N^2} \sum_i \sum_{j \neq i} (\bar M(\xi_i^{app}) \na_\xi \varphi(X_i^{app},\xi_i^{app})) : \bigg( \nabla^2 G(X_i^{app} - X_j^{app})  \Big(S(\xi_j^{app}) D u (X_j^{app})\Big) \bigg).
\end{align}
where $\varphi$ is a test function and where $\bar M(\xi) \in \mathcal L(\R^3 ; \R^{3\times3}_0)$ is the linear map that satisfies
\begin{equation} \label{def_barM}
\begin{aligned}
    (\bar M(\xi) \zeta) : A  = \zeta \cdot (M(\xi) A) \xi \quad \text{for all } A \in \R^{3\times 3}_0, \zeta \in \R^3,
\end{aligned}
\end{equation}
that is $\bar M(\xi) \zeta = M(\xi) (\zeta \otimes \xi)$. 

As mentioned in Subsection \ref{subsec:outline} and worked out in Subsection \ref{subsec:proof_deterministic2}, such singular mean field interactions can be reformulated in terms of energies of solutions of some special Stokes problems, forced by a sum of (regularized)  point dipoles located at $X_i^{app}$. This small scale forcing generates oscillations in the Stokes solutions, so that computing the limit of their energies as $N \rightarrow +\infty$ draws connection to homogenization theory. This suggests to consider random stationary distributions of particles to further investigate the limit.

\subsection{Further results for stationary ergodic initial data} 
 
To push the analysis further, we assume in addition that the initial particle configuration is generated from a stationary ergodic marked point process
 $(X'_k, \xi'_k), k \in \N$ on $\R^3 \times \S^2$ with hardcore condition
 \begin{equation} \label{ass:separation'}
 |X'_k - X'_l| \ge c' > 0
 \end{equation}
 for some deterministic constant $c'$. See Section \ref{section_stationary} for details. Then, given a small parameter $\eps$, and given a  bounded $C^{1,\alpha}$ domain $\mO$, $\alpha \in (0,1)$, we set 
$$\{(X_1^0, \xi_1^0) \dots (X_N^0, \xi_N^0) \}= \{(\eps X'_k, \xi'_k), k \in \Z, \: \eps X_k' \in \mO \}.$$
Note that $N$ is random, but by the ergodic theorem it is almost surely equivalent to $\eps^{-3} |\mO| \lambda$ as $\eps \rightarrow 0$, where $\lambda$ is the intensity of the point process (see \eqref{intensity} for its definition). In particular, almost surely, $N \rightarrow + \infty$ as $\eps \rightarrow 0$. 
Within such setting, we prove:  
\begin{theorem} \label{theo3}
Let $(X_i^0,\xi_i^0)_{1 \le i \le N}$ be generated by the stationary ergodic marked point process $(X_k',\xi_k')_{k \in \N}$ as specified above.
	Then, for any  $\Psi_1, \Psi_2 \in C(\R^3 \times \S^2; \R_0^{3\times3})$, and any time $t \geq 0$
	\begin{align*} 
	&\lim_{N \to \infty} \frac 1 {N^2} \sum_i \sum_{j \neq i} \Psi_1(X^{app}_i(t),\xi^{app}_i(t)) :  \Big(\nabla^2 G(X^{app}_i(t) - X_j^{app}(t)) \Psi_2(X_j^{app}(t),\xi_j^{app}(t))\Big)
	\end{align*}
 exists almost surely and is deterministic.
	\end{theorem}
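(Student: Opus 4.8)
The plan is to reformulate the singular double sum as the energy of a Stokes flow forced by regularized dipoles and then apply (a quenched version of) stochastic two-scale convergence. By Theorem~\ref{th:app} and \eqref{dmin.control.thm}, the positions $X_i^{app}(t)$ remain strongly separated for all $t\le T$ and are obtained from the $X_i^0$ by the (bi-Lipschitz) flow of $u_\phi$; the orientations $\xi_i^{app}(t)$ are likewise given by an ODE flow with smooth coefficients. Thus, fixing $t$, it suffices to treat a configuration of the form $\{(Y_i,\eta_i)\}$ where $Y_i = \Phi_t(\eps X_k')$ and $\eta_i$ is the transported orientation, with $\Psi_2(Y_i,\eta_i)$ playing the role of the dipole strength. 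First I would introduce, for a length scale $\rho$ comparable to the interparticle distance, a smeared point-dipole source $\sum_i \operatorname{div}(\Psi_2(Y_i,\eta_i)\,\delta_{Y_i}^{(\rho)})$ and let $w_N$ solve the associated Stokes problem; following the Serfaty-type computation used in \cite{Gerard-VaretHillairet19} and invoked in Subsection~\ref{subsec:proof_deterministic2} (Theorem~\ref{theo2}), the double sum equals, up to errors controlled by the separation and by $\rho$, a bilinear energy functional of $w_N$ against the companion flow generated by the $\Psi_1$-weighted dipoles at the $Y_i$.

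The second step is to identify the $N\to\infty$ (equivalently $\eps\to 0$) limit of this energy. Since the $Y_i$ are the images under a \emph{fixed} smooth diffeomorphism of a stationary ergodic point cloud $\eps X_k'$, the relevant oscillating coefficient/empirical-measure field is stationary only after pulling back by $\Phi_t$; it is \emph{not} stationary in the $x$-variable. This is exactly the obstruction flagged in Subsection~\ref{subsec:proof_random1}: one cannot apply classical stochastic two-scale convergence directly. I would resolve it by changing variables via $\Phi_t$ to return to a genuinely stationary configuration, at the cost of turning the constant-coefficient Stokes operator into a variable-coefficient elliptic system with smooth, deterministic coefficients (the Jacobian of $\Phi_t$), and a correspondingly deformed dipole kernel. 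On the stationary side one then invokes the ergodic theorem for the point process together with quenched two-scale convergence à la \cite{ZhiPia2006}: the empirical measure $\eps^3\sum_k \delta_{\eps X_k'}$ two-scale converges, $\PP$-a.s., to $\lambda\,\d x \otimes (\text{Palm measure})$, and the associated corrector problems (cells indexed by the point process environment) are solvable by ergodicity. The a.s. convergence of the energy then follows from lower-semicontinuity/compensated-compactness arguments standard in this framework, and the limit is expressed through deterministic correctors, hence deterministic.

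The third step is to remove the regularization: one shows the $\rho$-dependence of both the finite-$N$ quantity and its limit is uniform, using the strong separation \eqref{dmin.control.thm} to bound the self-contributions and the near-neighbor sums (the same estimates that, in the deterministic part, give the error terms in \eqref{W_infty.app}); letting $\rho\to 0$ after $N\to\infty$ yields the claim. A density argument in $\Psi_1,\Psi_2$ (from, say, Lipschitz to merely continuous, using uniform-in-$N$ bounds on $\frac1{N^2}\sum_{i\neq j}|\nabla^2 G(Y_i-Y_j)|$, again via separation) extends the statement to all $\Psi_1,\Psi_2\in C(\R^3\times\S^2;\R_0^{3\times 3})$, and the continuity in $t$ is inherited from the continuous dependence of the ODE flows.

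I expect the \textbf{main obstacle} to be Step~2: making quenched two-scale convergence work on the \emph{non-stationary} deformed configuration. The pull-back by $\Phi_t$ is the natural fix, but one must check carefully that (i) the deformed operator still fits the abstract two-scale/homogenization framework (uniform ellipticity, measurability of the random environment, well-posedness of the corrector cell problems under the hardcore condition \eqref{ass:separation'}), and (ii) the regularized-dipole forcing, after deformation, still generates oscillations captured exactly by that framework, with the self-energy terms genuinely negligible. The rest — the Serfaty-type reformulation and the final regularization/density cleanup — is, by comparison, routine given the tools already assembled in the earlier sections.
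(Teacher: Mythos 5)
Your overall strategy is the one the paper actually uses: rewrite the singular double sum as an energy of Stokes flows forced by regularized dipoles (Theorem \ref{theo2}), then restore stationarity by pulling back through the flow $\Phi$ and apply quenched two-scale convergence in the spirit of \cite{ZhiPia2006}. Two points in your outline, however, are genuinely wrong as stated and would lead either to an incorrect limit or to a step that cannot be closed.

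First, the self-interaction terms are \emph{not} negligible. The exact identity of Theorem \ref{theo2} is
$I^N = -\int \nabla h^{N,\eta}[\Psi_1]:\nabla h^{N,\eta}[\Psi_2] + \sum_i \int \nabla h^{N,\eta}_i[\Psi_1]:\nabla h^{N,\eta}_i[\Psi_2]$,
and with $\eta\sim N^{-1/3}$ each summand in the diagonal correction has $\|\nabla h^{N,\eta}_i\|_{L^2}^2 \sim N^{-2}\eta^{-3}\sim N^{-1}$, so the correction is $O(1)$. It converges (by the ergodic theorem, after a partitioning argument) to a nonzero deterministic integral involving $\int_{\R^3}\nabla\mH^T\nabla\mH$; dropping it, as your expectation that ``the self-energy terms [are] genuinely negligible'' suggests, changes the limit by an $O(1)$ amount. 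The paper treats this term as a separate proposition with its own limit computation, on the same footing as the full energy.

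Second, your order of limits in Step 3 ($N\to\infty$ first at fixed regularization scale $\rho$, then $\rho\to 0$) does not work: the identification of the double sum with the regularized energy requires the smeared dipoles to be disjoint, i.e.\ $\rho\lesssim N^{-1/3}$, and for fixed $\rho$ the regularized energy is not close to $I^N$ as $N\to\infty$. The paper's resolution is different and sharper: it proves that the regularized functional $I^{N,\eta}$ is \emph{exactly constant} in $\eta$ for all $\eta\le \tfrac{c}{4}N^{-1/3}$ (an integration-by-parts argument using that $h^{N,\eta}_i$ agrees with the unregularized $h^N_i$ on the spheres $\partial B(X_j,\eta)$, $j\neq i$), equals $I^N$ in the limit $\eta\to 0$ at fixed $N$, and can therefore be evaluated at the single coupled scale $\eta=c''\eps\sim N^{-1/3}$. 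There is no residual $\rho\to 0$ limit to take. Your Step 2 (pullback by $\Phi_t$, variable-coefficient weak formulation with the Jacobian $(\nabla\Phi)^{-1}$, identification of the two-scale limit and corrector via Lax--Milgram to get a realization-independent limit) matches the paper's argument; note only that the paper first replaces $(X_i^{app},\xi_i^{app})$ by the zero-order flow of $u$ rather than $u_\phi$, which is a harmless variant.
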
 
To prove this result, we shall rely on a reformulation of the mean-field binary interaction with Stokes energy functionals and use homogenization tools.  As emphasized in Subsection \ref{subsec:outline}, the novelty and difficulty of the analysis relies on the fact that the stationarity of the particles centers is not preserved through time. Still,  we are able to derive the limit, notably adapting the method of stochastic two-scale convergence. 

It is clear from the proof of Theorem \ref{theo3} that this limit depends on the microstructure of the suspension, contrary to what is assumed in the Doi model. The next step is to obtain an explicit formula in terms of classical quantities for the initial point process. We remind here the notions of $1$-point correlation and  2-point correlation measures
$$\mu_1 \in \mathcal{M}(\R^3 \times \S^2), \quad \mu_2 \in \mathcal{M}(\R^3 \times \S^2 \times \R^3 \times \S^2)$$ defined through
\begin{align}
    \E\bigg[\sum_i  F_1(X'_i,\xi'_i)\bigg] & =  \int F_1(y,\xi)  \mu_1(\d y , \d \xi) \quad \text{for all }   F \in C_c^\infty(\R^3 \times \S^2 )
\end{align}
and 
\begin{align}
    \E\bigg[\sum_i \sum_{j\neq i} F_2(X'_i,\xi'_i, X'_j,\xi'_j)\bigg] &= \int F_2(y_1,\xi_1,y_2,\xi_2) \mu_2(\d  y_1,\d \xi_1,\d y_2,\d \xi_2) \\
    & \hspace{2cm} \text{for all }   F_2 \in C_c^\infty(\R^3 \times \S^2 \times \R^3 \times \S^2) .
\end{align}
By stationarity and ergodicity, the 1-point correlation measure is given by 
$$\mu_1(\d y, \d \xi) = \lambda \kappa(\d\xi), \quad \kappa \in \mathcal{P}(\S^2) $$ 
In particular, the empirical measure  satisfies
\begin{align} \label{f^0}
    f_N^0 \wto f^0 = \frac {1_{\mO}} {|\mO|} \otimes \kappa
\end{align}
As regards $\mu_2$, we assume that it takes the form 
$$ \mu_2(\d y_1,\d \xi_1,\d y_2,\d\xi_2) = \nu_2(y_1-y_2,\d\xi_1,\d\xi_2) \dd y_1 \dd y_2 $$
for some $\nu_2 \in C^0(\R^3 ; \mathcal{M}(\S^2  \times \S^2))$. Dependence on $y_1 - y_2$ is a consequence of stationarity.  Moreover, we assume the mild mixing assumption
\begin{equation} \label{mixing} 
|\nu_2(y,\d\xi_1,\d\xi_2) - \lambda^2 \kappa(\d\xi_1) \kappa(\d\xi_2)|(\S^2 \times \S^2) \le h(y), \quad \int_{\R^3} \frac{h(y)}{1+ |y|^3} \dd y < +\infty 
\end{equation}
where the absolute value on the left-hand side refers to the total variation.

We can then give a formula for the limit in Theorem \ref{theo3}. More precisely, the limit can be expressed in terms of both the $1$-point and $2$-points correlation measures as well as the flow associated with \eqref{eq:f}. By flow, we mean the couple of maps  $\Phi \colon [0,\infty) \times \R^3 \to \R^3$ and $\Xi \colon [0,\infty) \times \R^3 \times \S^2 \to \S^2$ defined by
\begin{align}
    \partial_s \Phi (s,x) &= u(s,\Phi(s,x)), &&\qquad \Phi(0,x) = x, \label{Phi} \\
    \partial_s \Xi (s,x,\xi) &=M(\Xi(s,x,\xi)) \nabla u(s,\Phi(s,x)) \Xi(s,x,\xi),&&  \qquad \Xi (0,x,\xi) = \xi. \label{Xi}
\end{align}

\begin{theorem} \label{cor1} Under the assumptions of the previous theorem and  the  mild mixing assumption \eqref{mixing},
we have, for any $\Psi_1, \Psi_2 \in C(\R^3 \times \S^2; \R_0^{3\times3})$, and any time $t\geq 0$
	\begin{align} \label{limit.sum.thm}
 \begin{aligned}
	& \lim_{N \to \infty} \frac 1 {N^2} \sum_i \sum_{j \neq i} \Psi_1(X^{app}_i(t),\xi_i^{app}(t)) \Big( \nabla^2 G\big(X^{app}_i(t) - X^{app}_j(t)\big) \Psi_2\big(X^{app}_j(t),\xi^{app}_j(t)\big) \Big)   \\
 &=  \int_{(\S^2)^2 \times (\R^3)^2}  \Psi_1(x_1,\xi_1) : \Big( \nabla^2 G(x_1-x_2)   \Psi_2(x_2,\xi_2) \Big) f(t,\d x_1,\d \xi_1) f(t,\d x_2,\d\xi_2)  \\
 & +  \int_{(\S^2)^2  \times (\R^3)^2} {\Psi}_1(x,\xi_1) \Big( \na^2 G(y) \Psi_2(x,\xi_2) \Big)\nu_{2,t}(\d x, \d y,\d\xi_1,\d\xi_2)    , 
 \end{aligned}
 \end{align}
    where
\begin{align} \label{nu_2,t}
      \nu_{2,t}(\d x, \d y, \d \xi_1, \d \xi_2)  
    &= \frac{1}{\lambda^2 |\mO|^2} \big(\Phi(t,x), \nabla \Phi(t,x) y , \Xi(t,x,\xi_1),\Xi(t,x,\xi_2)\big) \\
    & \hspace{1cm}\#\big( 1_{\mO}(x) \dd x  \, \nu_2(y,\d \xi_1,\d \xi_2) \d y - \lambda^2  1_{\mO}(x) \dd x \dd y \, \kappa(\d \xi_1)  \kappa(\d \xi_2)\big).
\end{align}
	\end{theorem}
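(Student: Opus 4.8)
The plan is to use that, by Theorem~\ref{theo3}, the limit already exists and is \emph{deterministic}, so it equals $\lim_N \tfrac1{N^2}\E[\sum_i\sum_{j\ne i}\cdots]$, i.e.\ an integral against the two‑point correlation measure of the time‑$t$ configuration, and then to relate the latter to the two‑point correlation of the initial point process via the limiting flow $(\Phi,\Xi)$ of \eqref{Phi}--\eqref{Xi}. The heuristic behind \eqref{nu_2,t} is a decoupling of scales: macroscopically the cloud is transported by $\Phi(t,\cdot)$, which is volume preserving because $u$ is divergence free (hence $\det\nabla\Phi(t,\cdot)\equiv1$), whereas the microscopic separation $X_i^{app}-X_j^{app}$ of a close pair is, to leading order, transformed by the differential $\nabla\Phi(t,X_i^0)$, while the two orientations of the pair are carried by $\Xi$ from the common base point $X_i^0$. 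To justify the reduction to the expectation I would first record the discrete Calderón--Zygmund bound $\tfrac1N\sum_i|\tfrac1N\sum_{j\ne i}\nabla^2 G(X_i-X_j)\Psi_2(X_j,\xi_j)|^2\le C$, an elementary consequence of the hard‑core separation and Calderón--Zygmund theory; with Cauchy--Schwarz it bounds the double sum in Theorem~\ref{theo3} by a deterministic constant, so its almost sure convergence self‑improves to convergence in $L^1(\Omega)$, whence the claim.

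Next I would replace the true ODE trajectories by the flow $(\Phi,\Xi)$. Since \eqref{X^app} carries no interaction term, $X_i^{app}(t)$ is the flow of $u_\phi$ from $X_i^0$, which differs from $\Phi(t,\cdot)$ by $O(\phi)$ in $C^1$ on compacts as $u_\phi-u=O(\phi)$ (cf.\ \eqref{u_phi}, Theorem~\ref{th:well-posedness}); thus each separation vector is perturbed relatively by $O(\phi)$, so replacing $\nabla^2 G(X_i^{app}-X_j^{app})$ by $\nabla^2 G(\Phi(t,X_i^0)-\Phi(t,X_j^0))$ costs at most $\tfrac1{N^2}\sum_i\sum_{j\ne i}\phi\,|X_i^0-X_j^0|^{-3}\lesssim\phi\log N=o(1)$ by \eqref{ass:separation}--\eqref{ass:diluteness}. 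For the orientations, \eqref{xi^app}, Gronwall's lemma and $\tfrac\phi N\sum_{j\ne i}|X_i^0-X_j^0|^{-3}\lesssim\phi\log N$ (using \eqref{dmin.control.thm}) give $|\xi_i^{app}(t)-\Xi(t,X_i^0,\xi_i^0)|\lesssim\phi\log N=o(1)$; as $\Psi_1,\Psi_2$ are uniformly continuous this perturbs their arguments by $o(1)$ uniformly, and pairing against the Calderón--Zygmund bound above via Cauchy--Schwarz makes the induced error in the double sum $o(1)$ as well. It therefore remains to compute
\[
\lim_{N\to\infty}\tfrac1{N^2}\,\E\Big[\sum_i\sum_{j\ne i}\Psi_1\big(\Phi(t,X_i^0),\Xi(t,X_i^0,\xi_i^0)\big):\nabla^2 G\big(\Phi(t,X_i^0)-\Phi(t,X_j^0)\big)\,\Psi_2\big(\Phi(t,X_j^0),\Xi(t,X_j^0,\xi_j^0)\big)\Big].
\]

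By construction the two‑point correlation of $(X_i^0,\xi_i^0)$ has density $\eps^{-6}\,1_{\mathcal O}(z_1)1_{\mathcal O}(z_2)\,\nu_2((z_1-z_2)/\eps,\d\xi_1,\d\xi_2)$ and $N\sim\eps^{-3}|\mathcal O|\lambda$ a.s.\ by the ergodic theorem, so (using again the deterministic bound of the first step to legitimize $\tfrac1{N^2}\leftrightarrow\tfrac{\eps^6}{|\mathcal O|^2\lambda^2}$) the limit equals $\lim_{\eps\to0}\tfrac1{|\mathcal O|^2\lambda^2}\int_{\mathcal O}\int_{\mathcal O}\int_{(\S^2)^2}\Psi_1(\Phi(t,z_1),\Xi(t,z_1,\xi_1)):\nabla^2 G(\Phi(t,z_1)-\Phi(t,z_2))\,\Psi_2(\Phi(t,z_2),\Xi(t,z_2,\xi_2))\,\nu_2((z_1-z_2)/\eps,\d\xi_1,\d\xi_2)\,\d z_1\,\d z_2$. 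Substituting $z_1=z$, $z_2=z-\eps w$ (Jacobian $\eps^3$) and using the $(-3)$‑homogeneity $\eps^3\nabla^2 G(\Phi(t,z)-\Phi(t,z-\eps w))=\nabla^2 G(\eps^{-1}(\Phi(t,z)-\Phi(t,z-\eps w)))\to\nabla^2 G(\nabla\Phi(t,z)w)$ (together with $z-\eps w\to z$, $1_{\mathcal O}(z-\eps w)\to1_{\mathcal O}(z)$ a.e.), the formal limit is
\[
\tfrac1{|\mathcal O|^2\lambda^2}\int_{\mathcal O}\int_{\R^3}\int_{(\S^2)^2}\Psi_1\big(\Phi(t,z),\Xi(t,z,\xi_1)\big):\nabla^2 G\big(\nabla\Phi(t,z)w\big)\,\Psi_2\big(\Phi(t,z),\Xi(t,z,\xi_2)\big)\,\nu_2(w,\d\xi_1,\d\xi_2)\,\d w\,\d z.
\]
This $w$‑integral does not converge absolutely, so I split $\nu_2(w)=\lambda^2\kappa\otimes\kappa+(\nu_2(w)-\lambda^2\kappa\otimes\kappa)$. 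For the connected part the mixing assumption \eqref{mixing} provides the dominating function $|w|^{-3}h(w)\in L^1(\R^3)$ — the hard‑core $\nu_2\equiv0$ on $\{|w|<c'\}$ removing the origin singularity — so dominated convergence applies and, after undoing the substitution, yields exactly the $\nu_{2,t}$‑term of \eqref{limit.sum.thm} with $\nu_{2,t}$ given by \eqref{nu_2,t}. For the mean‑field part $\lambda^2\kappa\otimes\kappa$, rather than taking the divergent $w$‑limit I would add and subtract the full product sum $\tfrac1{N^2}\sum_i\sum_j\Psi_1:\nabla^2 G_\eta(X_i^{app}(t)-X_j^{app}(t))\,\Psi_2$ for the truncation $\nabla^2 G_\eta:=\nabla^2 G\,1_{\{|\cdot|\ge\eta\}}$: its diagonal vanishes, and since $\tfrac1N\sum_i\delta_{(X_i^{app}(t),\xi_i^{app}(t))}\wto f(t)$ (cf.\ Theorem~\ref{th:app}, \eqref{Mitia.0}, \eqref{f^0}) it converges to $\int\int\Psi_1(x_1,\xi_1):\nabla^2 G_\eta(x_1-x_2)\,\Psi_2(x_2,\xi_2)\,f(t,\d x_1,\d\xi_1)\,f(t,\d x_2,\d\xi_2)$; letting $\eta\to0$ and controlling the close‑pair remainder once more via the two‑point correlation and \eqref{mixing} produces the first term of \eqref{limit.sum.thm}, the singular integral being meaningful in the Calderón--Zygmund sense since $f(t)$ has a bounded $x$‑density. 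Summing the two contributions proves \eqref{limit.sum.thm}.

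The chief obstacle is the critical, non‑integrable nature of $\nabla^2 G$: since $\tfrac1{N^2}\sum_i\sum_{j\ne i}|\nabla^2 G(X_i-X_j)|\sim\log N$, none of the steps above can be carried out by crude absolute bounds, and the cancellations of $\nabla^2 G$ must be exploited systematically — via the discrete Calderón--Zygmund estimate in the flow‑replacement and the $L^1(\Omega)$ reduction, and via the mean‑field subtraction in the passage to the limit — with the diluteness hypothesis \eqref{ass:diluteness} being precisely what forces the residual $O(\phi\log N)$ errors to vanish. A secondary technical point is that $N$ is itself random, dealt with by combining the ergodic theorem with the deterministic bounds above to replace $1/N^2$ by $\eps^6/(|\mathcal O|^2\lambda^2)$ at negligible cost.
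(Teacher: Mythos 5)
Your strategy matches the paper's in its skeleton: use Theorem~\ref{theo3} plus a uniform deterministic bound to reduce the almost--sure limit to a limit of expectations, replace $(X_i^{app},\xi_i^{app})$ by the flow $(\Phi,\Xi)$ applied to the initial data, and then split the two--point correlation into $\lambda^2\kappa\otimes\kappa$ and its connected remainder, handling the latter via the mixing assumption \eqref{mixing} and dominated convergence. The differences from the paper's proof are worth noting. First, the uniform bound on the double sum: you invoke a ``discrete Calder\'on--Zygmund estimate'' $\tfrac1N\sum_i|\tfrac1N\sum_{j\neq i}\nabla^2 G(X_i-X_j)\Psi_2(X_j,\xi_j)|^2\le C$ as ``elementary'' --- but such a discrete singular--integral $\ell^2$ bound is not at all immediate for a general hard--core configuration (the diagonal $\ell^\infty$ bound gives only $\log N$); the paper obtains a cleaner and genuinely elementary $L^\infty(\Omega)$ bound from Corollary~\ref{coro:bound_IN}, which is precisely what Theorem~\ref{theo2}'s reformulation of $I^N$ as a difference of Stokes energies was designed to give. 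Second, in the flow--replacement step the stated rate $\phi\log N$ is not quite right: the relative--displacement bound is $|X_i^{app}-\bar X_i^{app}-(X_j^{app}-\bar X_j^{app})|\lesssim r+\phi\log N|X_i^0-X_j^0|$ (this is the content of Lemma~\ref{lem.sum.app}), so the error in the double sum is of order $\phi(\log N)^2+\phi^{1/3}$ and vanishes by \eqref{ass:diluteness}; the conclusion is the same. Third, for the mean--field term you go back to the discrete level with a kernel truncation $\nabla^2 G_\eta$ and invoke weak--$*$ convergence of the empirical measure; the paper instead changes variables in the expectation integral and passes directly to the principal--value limit, first for smooth compactly supported integrands and then by a $L^p$ Calder\'on--Zygmund density argument. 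Your route is workable but, as written, it is tangled: the ``add and subtract'' involves the \emph{full} truncated product sum, whose expectation contains both the mean--field and the connected contributions at scale $>\eta$, so the bookkeeping of what goes where as $N\to\infty$ then $\eta\to0$ needs to be spelled out carefully; moreover, $\nabla^2 G_\eta$ is discontinuous on the sphere $\{|x_1-x_2|=\eta\}$, so plain weak--$*$ convergence of $f_N^{app}\otimes f_N^{app}$ does not by itself give convergence of the truncated double sum --- you need either the quantitative Wasserstein control from \eqref{Mitia.0} or the fact that the limit measure does not charge that sphere. Neither of these is a fatal flaw, but they are genuine details where the paper's change--of--variables route is considerably cleaner. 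Finally, the paper's explicit two--stage treatment (smooth $\tilde\Psi$, then approximate $1_{\Phi(\mO)}\Psi$ using Corollary~\ref{coro:bound_IN}, the mixing bound, and $L^p$--boundedness of $\nabla^2 G\ast$) is necessary to define and control the two principal--value integrals in \eqref{limit.sum.thm} for merely continuous $\Psi_i$ with the discontinuous cut--off $1_\mO$; your proposal gestures at this via the ``bounded $x$--density of $f(t)$'' but does not carry it out.
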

\begin{remark} \label{rem:well.defined}
\begin{enumerate}
    \item Note that $\na^2 G$ is homogeneous of order $-3$, so that there is \emph{a priori} a problem of integrability for the first term at the right-hand side of \eqref{limit.sum.thm},  both near the diagonal $x_1 = x_2$ and at infinity. Still, this integral can be shown to be well-defined thanks to the Calderon-Zygmund theorem for singular integrals, see Subsection \ref{sec:correlation} for details. As regards the second term at the right-hand side, integrability at infinity comes from the mixing assumption \eqref{mixing}, while the singularity at $y=0$ is removed through implicit use of the principal value. More precisely, as $\nu_2$ is zero for $|y| < c'$ thanks to \eqref{ass:separation'}, $\nu_{2,t}$ is homogeneous in $y$ near the origin, so that for $\delta = \delta_t > 0$ small enough
\begin{align*}
& \int_{(\S^2)^2  \times (\R^3)^2} {\Psi}_1(x,\xi_1) \Big( \na^2 G(y) \Psi_2(x,\xi_2) \Big)\nu_{2,t}(\d x, \d y,\d\xi_1,\d\xi_2) \\
= &      \int_{(\S^2)^2  \times (\R^3)^2} {\Psi}_1(x,\xi_1) \Big( \na^2 G(y) 1_{|y| \ge \delta}\Psi_2(x,\xi_2) \Big)\nu_{2,t}(\d x, \d y,\d\xi_1,\d\xi_2)
\end{align*}

    \item The first term on the right-hand side of \eqref{limit.sum.thm} corresponds to the naive mean field limit: If $G$ was less singular at the origin, we would only get this term. 
    In particular, inserting $\Psi_1(x,\xi) = \bar M(\xi) \nabla_\xi \varphi(x,\xi)$ and $\Psi_2(x,\xi) = S(\xi) Du(t,x)$ (see \eqref{double.sum.original}), we obtain that this term satisfies
    \begin{align}
        &\int_{(\S^2)^2 \times (\R^3)^2}  (\bar M(\xi_1) \nabla_\xi \varphi(x_1,\xi_1)) : (\nabla^2 G(x_1-x_2)   (S(\xi_2) Du(t,x_2)))  \\
        & \hspace{8cm} f(t,\d x_1,\d \xi_1) f(t,\d x_2,\d\xi_2) \\
        &\hspace{5cm} = \frac 1 \phi \int_{\R^3 \times \S^2} \nabla_\xi \varphi(x,\xi) M D(u_\phi - u)(t,x) \xi f(t,\d x, \d \xi)
    \end{align}
    Indeed, by definition of $u_\phi$ in \eqref{u_phi}, we have
    \begin{align}
       D(u_\phi - u)(t,z) &= \phi (\nabla^2 G \ast \mathcal V[f] Du)(t,z) \\ &= \phi \int_{\R^3 \times \S^2} \nabla^2 G(z - x) (S(\xi) Du(t,x))  f(t,\d x,\d \xi).
    \end{align}
\end{enumerate}
\end{remark}


    Our final and main result is that $f_N$ is well approximated at order $\phi$ through the solution $f_\phi$ to  \eqref{f_phi}
    for a suitable function $B$. Since without $B$, $f_\phi$ would solve the \enquote{naive} mean-field limit,  $B$ must (to leading order) stem from the second right-hand side term in \eqref{limit.sum.thm} with the choice $\Psi_1(x,\xi) = \bar M(\xi) \nabla_\xi \varphi(x,\xi)$ and $\Psi_2(x,\xi) = S(\xi) Du(x)$, see \eqref{double.sum.original}. 
    By the Radon-Nikodym theorem and the disintegration theorem, we get the following representation, which we prove in Appendix \ref{sec:App.Disintegration}.
    \begin{lemma} \label{lem:Disintegration}
        There exists a measurable matrix field matrix field $B = B(t,x,\xi)$ such that for all $t \geq 0$ and all $\varphi \in C_c^\infty(\R^3 \times \S^2)$
        \begin{align}
              & \int_{(\S^2)^2  \times (\R^3)^2} \Big(\bar M(\xi_1) \nabla_\xi \varphi(x,\xi_1) \Big) : \Big( \na^2 G(y) S(\xi_2) Du(x) \Big) \nu_{2,t}(\d x, \d y,\d\xi_1,\d\xi_2)\\
   = &  \int_{\R^3 \times \S^2}   \nabla_\xi \varphi(x,\xi_1) : \big(M(\xi_1) B(t,x,\xi_1) \xi_1 \big) f(t,\d x, \d\xi_1) .
        \end{align}
    \end{lemma}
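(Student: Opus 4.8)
The plan is to write the left-hand side as the action of a fixed finite matrix-valued measure on the test object $\bar M(\xi)\na_\xi\varphi$, and then to split off $B$ by disintegrating this measure over $f(t,\cdot)$. Fix $t$ and define $\mathfrak m_t\in\mM(\R^3\times\S^2;\R^{3\times3}_0)$ by
\[
\int_{\R^3\times\S^2}\Psi(x,\xi_1):\mathfrak m_t(\d x,\d\xi_1):=\int_{(\S^2)^2\times(\R^3)^2}\Psi(x,\xi_1):\big(\na^2 G(y)\,S(\xi_2)\,Du(x)\big)\,\nu_{2,t}(\d x,\d y,\d\xi_1,\d\xi_2),
\]
so that the left-hand side of the lemma is exactly $\int(\bar M(\xi_1)\na_\xi\varphi(x,\xi_1)):\mathfrak m_t(\d x,\d\xi_1)$. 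The one genuinely delicate point is that $\mathfrak m_t$ is a \emph{finite} measure: the $y$-integration pairs the $-3$-homogeneous Calderón--Zygmund kernel $\na^2 G$ with $\nu_{2,t}$, which near $y=0$ is homogeneous of degree $0$ in $y$ by \eqref{ass:separation'} and \eqref{nu_2,t}, so that this integral survives only as a principal value, finite thanks to the vanishing of the spherical mean of $\na^2 G$; integrability at spatial infinity is provided by the mixing bound \eqref{mixing}. This is precisely the content of Remark \ref{rem:well.defined}.

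Next I disintegrate over $\R^3$. Let $\pi(x,\xi)=x$ and $\rho_t:=\pi_\#f(t,\cdot)$. By \eqref{f^0} and \eqref{nu_2,t}, $\rho_t=\Phi(t,\cdot)\#\big(|\mO|^{-1}1_\mO\,\d x\big)$, and changing variables in the formula above through the diffeomorphism $(x,y,\xi_1,\xi_2)\mapsto(\Phi(t,x),\na\Phi(t,x)y,\Xi(t,x,\xi_1),\Xi(t,x,\xi_2))$ shows that $\pi_\#\mathfrak m_t=\Phi(t,\cdot)\#\big(a_t\,1_\mO\,\d x\big)$ for some $a_t\in L^1(\mO;\R^{3\times3}_0)$, the integrability following from the previous step. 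In particular $\pi_\#\mathfrak m_t\ll\rho_t$. Disintegrating $\mathfrak m_t$ and $f(t,\cdot)$ along $\pi$ over the common measure $\rho_t$ yields, for $\rho_t$-a.e.\ $x$, measures $\gamma_{t,x}\in\mM(\S^2;\R^{3\times3}_0)$ and $f_{t,x}\in\mP(\S^2)$, measurable in $x$, with $\mathfrak m_t=\rho_t\otimes\gamma_{t,x}$ and $f(t,\cdot)=\rho_t\otimes f_{t,x}$. Since, for each fixed $x$, the family $\{\na_\xi\varphi(x,\cdot):\varphi\in C_c^\infty(\R^3\times\S^2)\}$ exhausts all tangential gradients on $\S^2$, it suffices to construct, for $\rho_t$-a.e.\ $x$, a measurable $B(t,x,\cdot)$ with
\[
\int_{\S^2}\big(\bar M(\xi)\na_\xi\psi(\xi)\big):\gamma_{t,x}(\d\xi)=\int_{\S^2}\na_\xi\psi(\xi)\cdot\big(M(\xi)B(t,x,\xi)\xi\big)\,f_{t,x}(\d\xi)\qquad\text{for all }\psi\in C^\infty(\S^2),
\]
jointly measurably in $(x,\xi)$; re-integrating over $\rho_t$ then gives the lemma.

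For this fibre problem, use $(\bar M(\xi)\zeta):A=\zeta\cdot M(\xi)A\xi$ to pass to the tangent-vector-valued measure $w_{t,x}(\d\xi):=M(\xi)\,\gamma_{t,x}(\d\xi)\,\xi$; the displayed identity then reads $\dv_{\S^2}\!\big((M(\cdot)B(t,x,\cdot)\cdot)\,f_{t,x}\big)=\dv_{\S^2}w_{t,x}$ in $\mathcal{D}'(\S^2)$. I first solve, for $\rho_t$-a.e.\ $x$, the equation $\dv_{\S^2}\big(V(t,x,\cdot)\,f_{t,x}\big)=\dv_{\S^2}w_{t,x}$ for a tangential field $V(t,x,\cdot)$ — directly by Radon--Nikodym when $w_{t,x}\ll f_{t,x}$, and in general by a weighted elliptic equation on $\S^2$, which is solvable because $\dv_{\S^2}w_{t,x}$ has zero mass and $f_{t,x}$ inherits the non-degeneracy of $\kappa$ through the flow \eqref{Xi} — and I select $V$ measurably in $x$. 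Finally I set $B(t,x,\xi)$ to be a measurable right-inverse of the surjection $\R^{3\times3}_0\ni A\mapsto M(\xi)A\xi\in T_\xi\S^2$ applied to $V(t,x,\xi)$; surjectivity holds since $M(\xi)$ maps onto $\skew(3)$ (a particle in a pure rotation follows the flow) and $\skew(3)\ni\omega\mapsto\omega\xi$ is onto $T_\xi\S^2$.

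The main obstacle is the first step: showing that $\mathfrak m_t$ is a genuine finite measure. Because $\nu_{2,t}$ is only $0$-homogeneous near the diagonal, the near-diagonal contribution persists only as a principal value, and one must exploit the exact cancellation of $\na^2 G$ together with the mixing assumption to control it — this is precisely the mechanism that makes the correction field $B$ well-defined rather than an artefact. A secondary difficulty is the solvability and measurable selection in the fibre problem, which is where the non-degeneracy of the initial orientation law $\kappa$ (hence of each $f_{t,x}$) enters; everything else is routine disintegration and Radon--Nikodym bookkeeping, carried out in Appendix \ref{sec:App.Disintegration}.
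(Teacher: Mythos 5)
Your structure (define $\mathfrak m_t$, disintegrate over the spatial marginal, solve a fibre problem) is considerably more elaborate than the paper's argument and, more importantly, has a gap at the crucial step. The paper proves the stronger fact that the \emph{joint} projection of $\mathfrak m_t$ onto $\R^3_x\times\S^2_{\xi_1}$ is absolutely continuous with respect to $f(t,\cdot)$ — not merely that $\pi_\#\mathfrak m_t\ll\rho_t$, which is all you establish. You observe that the projection onto $\R^3$ is $\Phi(t,\cdot)\#(a_t 1_\mO\dd x)$ with $a_t\in L^1$, so the spatial marginals match; but this says nothing about whether the fibre measures $\gamma_{t,x}$ are $\ll f_{t,x}$, which is what you need to apply Radon--Nikodym in each fibre. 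To patch this you offer a fallback, solving a weighted elliptic equation $\dv_{\S^2}(Vf_{t,x})=\dv_{\S^2}w_{t,x}$, and justify solvability by invoking ``the non-degeneracy of $\kappa$''. No such non-degeneracy is assumed in the paper: $\kappa\in\mP(\S^2)$ is arbitrary (it could be a Dirac), so your fallback route is not available in the stated generality.

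The paper closes precisely this gap by exploiting the hardcore separation \eqref{ass:separation'} and the structure of $\nu_2$ as a $2$-point correlation: one shows that, for any nonnegative $h\in C(\S^2)$ and suitable localizations $\psi,\varphi$, the quantity $\E\sum_{i\neq j} h(\xi'_i)\psi(X'_i)\varphi(X'_j)$ is controlled by $\|\varphi\|_\infty$ times a deterministic cardinality bound times $\E\sum_i h(\xi'_i)\psi(X'_i)=\lambda\int\psi\int h\,\dd\kappa$, whence the $\S^2_{\xi_1}$-marginal of $\varphi(y)\nu_2(y,\cdot)\dd y$ is dominated by a multiple of $\kappa$. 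This gives absolute continuity of the full measure against $f^0$, and hence against $f(t,\cdot)$ after pushing forward by the flow. Once this is in hand, there is no need for the fibre elliptic machinery or for the surjectivity of $A\mapsto M(\xi)A\xi$: one simply sets $B$ equal to the ($\R^{3\times3}_0$-valued) Radon--Nikodym density of $\mathfrak m_t$ with respect to $f(t,\cdot)$ and observes that $(\bar M(\xi_1)\na_\xi\varphi):B=\na_\xi\varphi\cdot(M(\xi_1)B\xi_1)$ by the very definition \eqref{def_barM} of $\bar M$. So you should replace your Steps 3--4 with a proof of the joint absolute continuity along these lines and then take the direct Radon--Nikodym route; the elliptic detour is both unnecessary and not justified under the paper's hypotheses.
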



\begin{theorem} \label{thm.f-f_phi} Assume that $B \in L^\infty(0,T;L^\infty_x W^{1,\infty}_\xi)$. 
Then, for all $T>0$, there exists a unique weak solution $f \in L^{\infty}((0,T) \times \R^3 \times \S^2)$ to \eqref{f_phi} with $f^0$ given through \eqref{f^0}. Assume further that there exists  $q > 5$, such that $N^{-1/3 + 1/q}= o(\phi)$, and $\|f_N^0 - f^0\|_{W^{-1,q'}_x W^{-2,q'}_\xi} = o(\phi)$. Under these assumptions and those of the previous theorem, 
     $$ \lim_{N \to \infty} \sup_{t \in (0,T)} \frac 1 \phi \|f_N(t)-f_\phi(t)\|_{W^{-1,q'}_x W^{-2,q'}_\xi} = 0.$$ 
\end{theorem}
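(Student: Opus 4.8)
The plan is to combine the deterministic approximation of Theorem~\ref{th:app} with the convergence of the singular binary interaction established in Theorems~\ref{theo3} and~\ref{cor1}, and then close a stability estimate for the (linear) transport equation \eqref{f_phi} in the weak norm $W^{-1,q'}_x W^{-2,q'}_\xi$. First I would record the well-posedness part: for $B \in L^\infty(0,T;L^\infty_x W^{1,\infty}_\xi)$, equation \eqref{f_phi} is a linear transport equation in $(x,\xi)$ with a divergence-free-in-$x$ velocity $u_\phi$ and a Lipschitz-in-$\xi$ drift $M\nabla u_\phi\,\xi + \phi M B \xi$ (the $\xi$-field is tangent to $\S^2$ because $M$ takes values in $\skew(3)$). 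Hence by the method of characteristics one gets a unique weak solution $f_\phi \in L^\infty((0,T)\times\R^3\times\S^2)$ with $\|f_\phi(t)\|_{L^\infty}$ controlled by $\|f^0\|_{L^\infty}$ and the flow's Jacobian; since $f^0 = 1_\mO/|\mO| \otimes \kappa$ this is fine once one notes $\kappa \in \mathcal P(\S^2)$ can be taken absolutely continuous — or more carefully, that $f^0 \in L^\infty$ is exactly what \eqref{f^0} provides when $\kappa$ has a bounded density; I would state the hypothesis on $\kappa$ explicitly if needed, or interpret $f_\phi$ as a measure solution and run the estimate in duality. I will assume the $L^\infty$ framework as in the statement.

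Next I would pass $f_N^{app}$ to the limit. By Theorem~\ref{th:app}, $\mathcal W_\infty(f_N(t), f_N^{app}(t)) = o(\phi)$ under the stated hypotheses (using $\mathcal W_\infty(f_N^0,f^0)\to 0$, which follows from $\|f_N^0-f^0\|_{W^{-1,q'}_xW^{-2,q'}_\xi}=o(\phi)$ together with tightness/boundedness of supports from \eqref{ass:bounded.cloud}); since $W^{-1,q'}_x W^{-2,q'}_\xi$ is weaker than $\mathcal W_\infty$ on compactly supported measures, it suffices to estimate $f_N^{app}(t) - f_\phi(t)$ in that norm. The empirical measure $f_N^{app}$ solves, in the distributional sense, the Vlasov-type equation
\begin{align*}
\partial_t f_N^{app} + u_\phi \cdot \nabla_x f_N^{app} + \dv_\xi\!\Big( M\nabla u\,\xi\, f_N^{app} + \phi\, \mathcal A_N[f_N^{app}]\,\Big) = 0,
\end{align*}
where $\mathcal A_N$ is the singular binary term built from $\frac{1}{N}\sum_{j\neq i}\nabla^2 G(X_i^{app}-X_j^{app})(S(\xi_j^{app})Du(X_j^{app}))$. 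Testing against $\varphi \in C^\infty_c$ and using \eqref{def_barM}, the nonlinear term is exactly the double sum \eqref{double.sum.original} with $\Psi_1 = \bar M(\xi)\nabla_\xi\varphi$, $\Psi_2 = S(\xi)Du(t,\cdot)$. By Theorem~\ref{cor1} this converges, for each fixed $t$, to the naive mean-field term plus the $\nu_{2,t}$-term; by Lemma~\ref{lem:Disintegration} the latter equals $\int \nabla_\xi\varphi : (MB\xi)\, f(t)$, while by Remark~\ref{rem:well.defined}(2) the former equals $\frac1\phi\int\nabla_\xi\varphi\, MD(u_\phi-u)\,\xi\, f(t)$, i.e.\ precisely the difference between $M\nabla u$ and $M\nabla u_\phi$ acting on $f$. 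So the limiting equation for (any weak-$*$ limit of) $f_N^{app}$ is \eqref{f_phi}, and by uniqueness the limit is $f_\phi$. The delicate point is that I need this not merely as a limit but quantitatively with rate $o(\phi)$; this is where I would invoke the quantitative form of Theorems~\ref{theo3}--\ref{cor1} (the homogenization/two-scale arguments in Steps 3--4 presumably come with a rate, or at least with a uniform-in-$t$ control on $(0,T)$ via equicontinuity in time, which one gets from the a priori bound \eqref{dmin.control.thm} on interparticle distances and smoothness of $u$).

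For the quantitative closing estimate I would set $g_N := f_N^{app} - f_\phi$ and write its equation: $g_N$ satisfies the same transport operator as $f_\phi$ plus a source consisting of (i) the discretization error $\phi(\mathcal A_N[f_N^{app}] - \text{(limit)})$, which is $o(\phi)$ in a suitable negative norm by the convergence above, (ii) the commutator $\phi M(B\xi - \text{difference of empirical interaction})$ — but this is absorbed in (i) — and (iii) the $\xi$-drift discrepancy $M\nabla u$ vs.\ $M\nabla u_\phi$, already accounted for. I would then test the $g_N$-equation against the solution $\psi$ of the \emph{backward} dual transport problem with terminal data $\psi(T) = $ a $W^{1,q}_x W^{2,q}_\xi$ function of unit norm; by the Lipschitz regularity of the coefficients ($u_\phi \in W^{1,q}$ from \eqref{u_phi} with $q>3$, $B \in L^\infty_x W^{1,\infty}_\xi$) one has $\|\psi(t)\|_{W^{1,q}_xW^{2,q}_\xi} \le C$ uniformly, using $q > 5$ for the needed embeddings (e.g.\ $W^{2,q}(\S^2)\hookrightarrow C^1$ in dimension $2$ requires $q>2$, and the spatial part $W^{1,q}(\R^3)$ requires $q>3$; the $q>5$ is presumably dictated by the $N^{-1/3+1/q}=o(\phi)$ condition coupling the discretization scale to $\phi$). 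Then $\langle g_N(T),\psi(T)\rangle = \langle g_N(0),\psi(0)\rangle + \int_0^T \langle \text{source},\psi\rangle$, giving $\|g_N(T)\|_{W^{-1,q'}_xW^{-2,q'}_\xi} \lesssim \|f_N^0-f^0\|_{W^{-1,q'}_xW^{-2,q'}_\xi} + o(\phi) = o(\phi)$. Taking the sup over $T \in (0,T_{\max})$ and combining with the $\mathcal W_\infty$ bound from Theorem~\ref{th:app} finishes the proof.

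\textbf{Main obstacle.} The hard part is upgrading the \emph{pointwise-in-$t$, qualitative} convergence of the singular double sum in Theorems~\ref{theo3}--\ref{cor1} to a \emph{quantitative $o(\phi)$ bound uniform on $[0,T]$}. Homogenization limits of this stochastic two-scale type are typically proved without explicit rates, so one must either extract a rate from the ergodic/mixing structure (the mixing assumption \eqref{mixing} is the natural source), or argue by a compactness-plus-uniqueness scheme and then separately control time-equicontinuity — the latter being subtle because stationarity of the centers is lost for $t>0$, exactly the difficulty flagged in Step 3. A secondary technical nuisance is the interchange of the $N\to\infty$ limit with the time integration and with the choice $\Psi_2 = S(\xi)Du(t,\cdot)$ depending on $t$; this should follow from continuity of $t\mapsto u(t)$ in $C^1$ and dominated convergence, but needs the uniform interparticle-distance lower bound \eqref{dmin.control.thm} to keep the singular sums equi-integrable.
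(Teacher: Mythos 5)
Your overall architecture matches the paper's: well-posedness by characteristics, reduction to $f_N^{app}$ via Theorem \ref{th:app}, identification of the limit of the singular double sum through Theorem \ref{cor1}, Lemma \ref{lem:Disintegration} and Remark \ref{rem:well.defined}(2), and a duality argument with the backward transport equation. But the step you flag as the ``main obstacle'' — turning the qualitative, pointwise-in-$t$ convergence of Theorems \ref{theo3}--\ref{cor1} into an $o(\phi)$ bound uniform on $[0,T]$ — is a genuine gap as you leave it, and the resolutions you sketch (extracting a rate from the mixing assumption, or a compactness-plus-uniqueness scheme with time equicontinuity) are not what is needed. No rate for the homogenization limit is required. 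The point is that the remainder $\frac1\phi R_N^{app}(t)$ is a bounded linear functional on $L^\infty_x W^{1,\infty}_\xi$, \emph{uniformly} in $N$ and $t$, by Corollary \ref{coro:bound_IN} (with $\Psi_1=\bar M\nabla_\xi\varphi$, $\Psi_2=S\,Du$), and it converges to zero against each fixed test function by Theorem \ref{cor1}. Since the embedding $W^{1,q}_x W^{2,q}_\xi\Subset L^\infty_x W^{1,\infty}_\xi$ is compact for $q>5$, weak-$*$ convergence plus the uniform bound upgrades automatically to norm convergence in $W^{-1,q'}_x W^{-2,q'}_\xi$ for each $t$, and the dominated convergence theorem in $t$ (again using the uniform bound) gives the $o(\phi)$ estimate in $L^\infty(0,T;W^{-1,q'}_x W^{-2,q'}_\xi)$. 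This is precisely where the choice of the weak norm and the condition $q>5$ earn their keep, and it removes the obstacle you could not close.

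A second, smaller gap: your claim that $W^{-1,q'}_x W^{-2,q'}_\xi$ is ``weaker than $\mathcal W_\infty$'' so that Theorem \ref{th:app} transfers directly is too quick. Test functions in $W^{1,q}_x W^{2,q}_\xi$ are Lipschitz in $\xi$ but only $C^{0,1-3/q}$ in $x$, so pairing with $f_N-f_N^{app}$ yields $\mathcal W_\infty(f_N,f_N^{app})$ only for the orientation part; the spatial part produces an extra discretization error of order $N^{-1/3+1/q}$, and controlling it requires the smearing/optimal-transport argument of the paper's final step (interpolating between the smeared empirical measures $\bar\rho_N$ and $\bar\rho_N^{app}$ along a Wasserstein geodesic with uniformly bounded densities). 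This is exactly where the hypothesis $N^{-1/3+1/q}=o(\phi)$ enters, not merely as a coupling of scales as you speculate.
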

Here, we denote $W^{-1,q'}_x W^{-2,q'}_\xi$ as the dual of
\begin{align}
    W^{1,q}_x W^{2,q}_\xi := \{ \varphi \in W^{1,q}(\R^3 \times \S^2) : \nabla_\xi \varphi \in  W^{1,q}(\R^3 \times \S^2)\}.
\end{align}
\begin{remark} \label{rem_thm_2.6}
\begin{enumerate}
\item The condition $N^{-1/3 + 1/q}= o(\phi)$ is an additional assumption made on $\phi$ in order to simplify our approximation statement. It ensures that spatial discretization errors are $o(\phi)$: consider the empirical measure $g_N = N^{-1} \sum_{i =1}^N \delta_{X_i} \in \mathcal P(\R^3)$ for a hardcore constant $|X_i - X_j | \geq c N^{-1/3}$, and its smeared out  continuous version $\frac 1 N c^{-3} | B_1|^{-1}\sum_{i =1}^N \1_{B(X_i,c N^{-1/3})}$. Then $\langle g_N - g, \varphi \rangle \leq N^{-1/3 + 1/q} [\varphi]_{C^{0,1-3/q}}$ for any $\varphi \in W^{1,q}(\R^3)
\subset C^{0,1-3/q}(\R^3)$. Moreover, the bound is sharp in the following sense: For any sequence of empirical measures $g_N \in \mathcal P(\R^3)$ and any continuous density $g \in \mathcal P$, one has $\|g_N - g\|_{(C^{0,\alpha}(\R^3))'} \geq O( N^{-\alpha/3}) $. Indeed, since $g$ is continuous, there exists $\eps > 0$  such that $|\{g \geq \eps\}| > 0$ and we set $d > 0$ such that $|\{g \geq \eps\}| = 2 |B_1| d^3$. Consider 
\begin{align}
    \varphi (x) := \min\bigg\{1, (d N)^{\alpha/3} \min_i |x - X_i|^{\alpha}\bigg\}.
\end{align}
Then $\|\varphi\|_{C^{0,1-3/q}} \lesssim (d N)^{\alpha/3} $, $\langle \varphi, g_N\rangle = 0$ and 
\begin{align}
    \langle \varphi, g\rangle \geq \int_{\R^3 \setminus \cup_{i=1}^N B(X_i,N^{-1/3} d) } \dd g \geq  \eps \bigg| \{g \geq \eps\} \setminus \bigcup_{i=1}^N B(X_i,N^{-1/3} d) \bigg| \geq \eps d^3 |B_1|.
\end{align}

    \item 
 The assumption $B \in L^\infty(0,T;L^\infty_x W^{1,\infty}_\xi)$ is essentially a regularity assumption on $\nu_2$. It is in particular satisfied if $\nu_2$ is of the form  $\tilde \nu_2(y) \kappa(d\xi_1) \kappa(d\xi_2)$, i.e. the initial particle orientations are uncorrelated.
 
 \item We do \emph{not}  expect $\mathcal W_p(f_N(t),f_\phi(t)) = o(\phi)$ (in accordance with the result in \cite{HoferMecherbetSchubert22}). Indeed, the PDE \eqref{f_phi} only captures the average angular velocity of particles with orientation $\xi$ at a macroscopic position $x$, because it arises through the limit of the double sum \eqref{double.sum.original}. However, we expect the actual angular velocities to be sensitive to the microscopic particle positions relative to the other particles, in the sense that two particles with the same orientation $\xi_i = \xi_j$ and very close positions $|X_i - X_j| = O(N^{-1/3})$ can have very different angular velocities $|\dot \xi_i - \dot \xi_j| = O(\phi)$. Such a behavior cannot be captured by the PDE \eqref{f_phi}. The above theorem measures the distance between $f_\phi$ and $f_N$ in a  Sobolev norm of order $-2$ with respect to the orientation variable $\xi$. This space is sufficiently weak that it only sees the the (macroscopic) local  averaged angular particle  velocities. In typical applications, (experimentally) observable quantities are such averaged quantities.

 We leave  the study of more accurate limit models, which need to retain information on the microstructure, to future research.
 \end{enumerate}
\end{remark}

\section{Proof of the  deterministic results} \label{sec_determin}

We first state the following (standard) well-posedness results for the effective systems \eqref{eq:f}--\eqref{u} and \eqref{u_phi}. 

\begin{theorem} \label{th:well-posedness}
Let $q>3$, $g \in W^{1,q}(\R^3) \cap L^1(\R^3)$ and either 
\begin{enumerate}[(1)]
    \item $f^0 \in \mP(\R^3 \times \S^2) \cap W^{1,q}(\R^3\times \S^2)$ or
    \item   $ f^0(x,\xi) = h^0(x,\xi) \1_{\mO}(x)$ for some bounded  $C^{1,\alpha}$ domain $\mO$, $\alpha \in (0,1)$, and some $h^0\in C^1(\R^3 \times \S^2)$.
\end{enumerate}
Then,
    \begin{enumerate}[(i)]
        \item For all $T > 0$ there is a unique solution 
        $$(u,f) \in W^{3,q}(\R^3) \times L^\infty(0,T;L^\infty(\R^3 \times \S^2))$$
        to  \eqref{eq:f}--\eqref{u}. Moreover, if (1) holds, then $f\in L^\infty(0,T;W^{1,q}(\R^3 \times \S^2))$, and, if (2) holds, then 
         $ f(t,\cdot) = h(t,\cdot) \1_{\Phi_t(\mO)}$ for some $C^2$ diffeomorphism $\Phi_t$, and some $h\in L^\infty(0,T;C^1(\R^3 \times \S^2))$.
        \item For all $T > 0$ there exists a unique solution $u_\phi \in L^\infty(0,T;W^{1,\infty}(\R^3))$ to \eqref{u_phi}. Moreover, 
            \begin{align} \label{regularity.u}
        \|u\|_{L^\infty(0,T;W^{2,\infty}(\R^3))} + \|u_\phi\|_{L^\infty(0,T;W^{1,\infty}(\R^3))} &\leq C, \\
                \|u - u_\phi\|_{W^{1,\infty}} &\lesssim \phi. \label{u-u_phi}
        \end{align}
    where $C$ depends only on $T$,  $f^0$ and $g$.
    \end{enumerate}

\end{theorem}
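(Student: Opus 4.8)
\textbf{Proof plan for Theorem \ref{th:well-posedness}.}

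The plan is to treat the two systems by a fixed-point/transport argument, using the regularity of the Stokes operator to gain one derivative and standard DiPerna--Lions / characteristics theory for the transport equation in $\xi$ and $x$. First I would set up part (i). Given a velocity field $u$ with $u \in L^\infty(0,T;W^{2,\infty})$ and $\nabla u$ Lipschitz in $x$, the coefficient field $(x,\xi) \mapsto M(\xi)\nabla u(t,x)\xi$ appearing in $\dv_\xi$ is $C^1$ in $\xi$ (since $M$ is smooth on $\S^2$) and Lipschitz in $x$, and it is divergence-free on $\S^2$ in the sense that it is the restriction to the sphere of a smooth vector field tangent to $\S^2$; together with the divergence-free transport field $u$ in $x$, the full transport operator $\partial_t + u\cdot\nabla_x + \dv_\xi(M\nabla u\,\xi\,\cdot)$ has a well-defined flow $(\Phi_t,\Xi_t)$ as in \eqref{Phi}--\eqref{Xi}. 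Hence $f(t,\cdot) = (\Phi_t,\Xi_t)_\# f^0$ times the Jacobian factor; in case (1) this preserves $W^{1,q}$ with norm bounded by $\exp(Ct\|u\|_{W^{2,\infty}})$, and in case (2) it maps $h^0 \1_{\mO}$ to $h(t,\cdot)\1_{\Phi_t(\mO)}$ with $\Phi_t$ a $C^2$ diffeomorphism (because $u \in W^{2,\infty}$ so the flow map in $x$ is $C^{1,1}$, actually $C^2$ after the bootstrap below) and $h(t,\cdot) \in C^1$. In both cases one has the uniform bound $\|f\|_{L^\infty(0,T;L^\infty)} \le \|f^0\|_{L^\infty}\exp(Ct)$.

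Next comes the closure of the fixed point. Given $f$ as above, the Stokes equation \eqref{u} has no dependence on $f$ at all: $u$ is simply the solution of $-\Delta u + \nabla p = g$, $\dv u = 0$ on $\R^3$ with $g \in W^{1,q}\cap L^1$, $q > 3$. By classical $L^q$ regularity for Stokes on $\R^3$ (Sobolev embedding $W^{3,q}\hookrightarrow W^{2,\infty}$ for $q>3$) one gets $u \in W^{3,q}(\R^3) \hookrightarrow C^2$, which is exactly the regularity the transport step needed — so there is in fact \emph{no} coupling and \emph{no} fixed point to run in part (i): one solves \eqref{u} once, then solves the transport equation for $f$. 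Uniqueness of $f$ follows from uniqueness of the flow (Cauchy--Lipschitz on $\R^3 \times \S^2$ for the $C^{0,1}$-in-$x$, $C^1$-in-$\xi$ field), and uniqueness of $u$ is classical. This disposes of (i) and of the estimate $\|u\|_{L^\infty(0,T;W^{2,\infty})} \le C$ in \eqref{regularity.u}.

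For part (ii), the right-hand side of \eqref{u_phi} is $g + \phi\,\dv(\mV[f]\,Du)$ with $f = f(t,\cdot)$ the solution from (i). Since $\mV[f](t,x) = \int_{\S^2} S(\xi)f(t,x,\xi)\dd\xi$ is bounded (and, in case (1), in $W^{1,q}_x$; in case (2), piecewise $C^1$ with a jump across $\partial\Phi_t(\mO)$) and $Du \in W^{1,\infty} \subset C^{0,\alpha}$, the product $\mV[f]\,Du$ lies in $L^\infty \cap L^q$ uniformly in $t$, hence $\dv(\mV[f]\,Du) \in W^{-1,q}$ and one gets $u_\phi \in W^{1,q}(\R^3) \hookrightarrow C^{0,1-3/q}$; to reach $W^{1,\infty}$ one uses that $\mV[f]Du$ is not merely $L^\infty$ but has a mild modulus of continuity (BMO-type: $Du \in C^{0,\alpha}$ times a bounded function of bounded variation), so the singular integral $\nabla^2 G * (\mV[f]Du)$ is bounded — more cleanly, one writes $u_\phi - u_{0,\phi}$ where $u_{0,\phi}$ solves Stokes with datum $g$ only, so $u_\phi - u_{0,\phi} = \phi\,\nabla^2 G * (\mV[f]Du) + (\text{lower order})$ and invokes the Calder\'on--Zygmund bound $\|\nabla^2 G * h\|_{L^\infty} \lesssim \|h\|_{L^\infty} + [h]_{C^{0,\alpha}} + \|h\|_{L^1}$ which holds because $\nabla^2 G$ is a $-3$-homogeneous kernel with mean-zero angular part. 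This yields $u_\phi \in L^\infty(0,T;W^{1,\infty})$. The bound \eqref{u-u_phi}, $\|u - u_\phi\|_{W^{1,\infty}} \lesssim \phi$, then follows by linearity: $u - u_\phi$ solves Stokes with right-hand side $-\phi\,\dv(\mV[f]Du)$, and the same Calder\'on--Zygmund estimate gives $W^{1,\infty}$ control by $\phi$ times a constant depending on $\|f\|_{L^\infty}$, $\|u\|_{W^{2,\infty}}$, hence on $T,f^0,g$. Uniqueness of $u_\phi$ is again classical Stokes uniqueness.

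The main obstacle is the $W^{1,\infty}$ (rather than merely $W^{1,q}$) bound on $u_\phi$: the datum $\dv(\mV[f]Du)$ only lives in $W^{-1,q}$ a priori, and $W^{1,q}$-Stokes regularity would give $u_\phi \in C^{0,1-3/q}$ but not Lipschitz. The resolution is to exploit the precise structure of $\mV[f]Du$ — a product of a bounded (piecewise-$C^1$) function and a $C^{0,\alpha}$ function — together with the Calder\'on--Zygmund/Schauder theory for the $-3$-homogeneous kernel $\nabla^2 G$, which upgrades $L^\infty \cap C^{0,\alpha} \cap L^1$ data to $L^\infty$ output for the second-derivative convolution. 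In case (2) one must additionally check that the jump of $\mV[f]$ across the interface $\partial\Phi_t(\mO)$ does not destroy this: the single jump across a $C^{1,\alpha}$ hypersurface still gives $\nabla^2 G * (\mathbf 1_{\Phi_t(\mO)} \cdot \text{smooth})$ bounded, which is a standard fact about Newtonian-type potentials of characteristic functions of $C^{1,\alpha}$ domains. Everything else is a routine combination of Cauchy--Lipschitz, the method of characteristics, and classical Stokes estimates, with all constants tracked through Gr\"onwall to depend only on $T$, $f^0$, $g$.
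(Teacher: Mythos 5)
Your proposal is correct and follows essentially the same route as the paper: in part (i) the system decouples ($u$ solves Stokes with datum $g$ alone, then $f$ is obtained by characteristics/push-forward), and in part (ii) the whole content is the $W^{1,\infty}$ bound for $u_\phi$. The only point where you genuinely diverge is case (2) of part (ii). The paper splits $u_\phi$ into a bulk part, $G$ convolved with the $L^\infty\cap L^1$ function $g+\phi\,\dv(\mV[f]Du)|_{\Phi(\mO)}$ (divergence taken classically inside the transported domain), plus a single-layer potential on $\partial\Phi_t(\mO)$ with H\"older density, bounded in $W^{1,\infty}$ by the Lyapunov theorem. You instead keep everything at the level of the kernel $\nabla^2 G$ and invoke Schauder for the H\"older bulk together with the boundedness of $\nabla^2 G \ast (\1_{\Phi_t(\mO)}\cdot\text{smooth})$ for a $C^{1,\alpha}$ domain. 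These are two packagings of the same potential-theoretic fact, so nothing is lost; the paper's version is slightly more self-contained because it cites the precise layer-potential result. For case (1) the paper takes the shorter path you mention only in passing: $\mV[f]Du\in W^{1,q}$ gives $L^q$ forcing, hence $u_\phi\in W^{2,q}\hookrightarrow W^{1,\infty}$ directly, with no need for the Schauder argument. One small inaccuracy to fix: the field $\xi\mapsto M(\xi)\nabla u\,\xi$ being tangent to $\S^2$ does not make it divergence-free on $\S^2$ (and it is not, in general); your exponential $L^\infty$ bound on $f$ is still correct, but it comes from the $C^1$ bound on the field (Jacobian control along characteristics), not from divergence-freeness.
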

\begin{proof}
Existence and regularity of $u$ follows from standard regularity theory of the Stokes equation. Existence and regularity of $f$ then follows from the fact that $u$ is divergence free and the regularity of $u$. Indeed, the flow $(\Phi,\Xi)$ defined in \eqref{Phi}--\eqref{Xi} is then well defined and
\begin{align}
    f(t,\cdot,\cdot) = (\Phi(t,\cdot), \Xi(t,\cdot,\cdot))\# f^0.
\end{align}

In case of (1), part (ii) follows from standard $L^p$ regularity theory of the Stokes equation and Sobolev embedding.

In case of (2), we can argue as in the proof of \cite[Proposition 3.3]{Gerard-VaretHillairet19} to obtain the desired Lipschitz estimate on $u_\phi$. 

Alternatively, we split $u_\phi = u_\phi^1 + u_\phi^2$, where (supressing the time dependence)
\begin{align} 
    u_\phi^1 =  G \ast ( g + \phi  \dv(\mV[f]  D u)|_{\Phi(\mO)}), \\
       u_\phi^2 = \phi G \ast ((\mV[f]  D u)|_{\partial \Phi(\mO)^-} \nu  \mathcal H^2|_{\partial \Phi(\mO)}),
\end{align}
where $|_{\partial \Phi(\mO)^-}$ denotes the trace taken from the interior. 
Then, $u_\phi^1 \in W^{1,\infty}$ since $g + \phi  \dv(\mV[f]  D u)|_{\Phi(\mO)} \in L^\infty(\R^3) \cap L^1(\R^3)$. 
Moreover, $u_{\phi}^2$ is a single layer potential.
By the H\"older regularity of $((\mV[f]  D u)|_{\partial \Phi(\mO)^-} \nu$ on the $C^{1,\alpha}$ surface ${\partial \Phi(\mO)^-}$,  this single layer potential  satisfies $u_\phi^2 \in W^{1,\infty}(\R^3)$ by the Lyapunov Theorem (see e.g. \cite[Chapter II.7]{Guenter67} for the Poisson equation and \cite[Chapter 3.2]{Ladyzhenskaya69} for the adaptations to the Stokes equation).

By the same argument, \eqref{u-u_phi} follows since $u_{diff} = u - u_\phi$ solves
    \begin{align}
        -\Delta u_{diff} + \nabla p_{diff} = \phi\dv ( \mV[f] D u), \qquad \dv u_{diff} = 0. \label{u_diff}
    \end{align}    
\end{proof}

\subsection{Approximation of the fluid velocity through the method of reflections} \label{sec:MOR}

The key ingredient towards the proof of Theorem \ref{th:app} is an approximation  of the fluid velocity $u_N$ through the method of reflections.
Throughout this section, we consider a fixed particle configuration $(X_i,\xi_i)_{i=1}^N$ satisfying 
\begin{align}
    \dmin \geq 4 r.
\end{align}
We denote for $1 \leq i \leq N$ and $k \in \N$
\begin{align}
    d_{ij} &:= |X_i - X_j|, \\
    S_k &:= \frac 1 N \sup_i \sum_{j \neq i} \frac 1 {d_{ij}^k}.
\end{align}
Moreover, we denote by $u,u_N \in \dot H^1(\R^3)$ the unique weak solutions to \eqref{u} and \eqref{main}, respectively.

We will use the following estimates (see for instance \cite[Lemma 4.8]{NiethammerSchubert19})
\begin{align} \label{est:S_k}
    N S_{k} \lesssim \begin{cases} \displaystyle{\frac{N^{\frac{3-k}{3}}}{\dmin^k}} &\quad \text{for } k=1,2\\[12pt]
   \displaystyle{ \frac{\log N}{\dmin^3}} &\quad \text{for } k=3\\[12pt]
    \displaystyle{\frac{1}{\dmin^k}}  & \quad \text{for } k \geq 4.
    \end{cases}
\end{align}

We will frequently use the short notation for averages
\begin{align}
    (h)_i := \fint_{B_i} h \dd x
\end{align}

\begin{proposition}\label{pro:MOR}
    There exists $\eps >0$ with the following property. Let $X_i \in \R^3$, $1\leq i \leq N$, and $r>0$ be given such that $\dmin \geq 4r$ and $\phi S_3 < \eps$. Let $u_N$ and $u$ be the solution to \eqref{main} and \eqref{u}, respectively. Then for all $1 \leq i \leq N$
    \begin{align}
         |(u_N- u)(X_i)| &\lesssim r+ \phi S_2, \label{u_N-u}\\
         |u_N(X_i) - u_N(X_j)| &\lesssim d_{ij} \label{u_N.Lipschitz} , \\
         |(u_N - u)(X_i) - (u_N-u)(X_j)| &\lesssim  \Big(\frac r {d_{ij}} + \phi S_3 \Big) d_{ij} \label{nabla.u-u_N} , \\
          |\nabla u_N(X_i) - M(\xi_i) \nabla u(X_i)| &\lesssim r +  \phi S_3.\label{u_N-u.xi}
    \end{align}
    Moreover,
    \begin{align} \label{u_N-u.1st.order}
        \bigg| (u_N- u)(X_i) - \frac \phi N \sum_{j \neq i} \nabla G(X_j - X_i) : S(\xi_j ) D u (X_j)\bigg|  &\lesssim r + \phi^2 S_2 S_3  , 
    \end{align}
    \begin{align}
        \bigg|\nabla u_N(X_i)  - M(\xi_i ) \Big(\nabla u(X_i ) + \frac{\phi}{N}\sum_{j \neq i} \nabla^2 G(X_i - X_j ) S(\xi_j ) D u (X_j)  \Big)\bigg| 
     \lesssim  (\phi S_3)^2 + r + r^4 N S_4 \qquad \label{u_N-u.xi.1st.order}
    \end{align}
\end{proposition}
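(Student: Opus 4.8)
The plan is to build an explicit approximation of $u_N$ by the \emph{method of reflections} and to read the six bounds off its structure. The key starting remark is that, since $u$ solves the Stokes equations in all of $\R^3$ with force $g$, a divergence-theorem computation (using that the Cauchy stress $\sigma$ is symmetric) shows that $v:=u_N-u$ is \emph{force-free and torque-free} on each $\mB_i$; hence every reflection below is a purely strain-driven single-particle problem, which is what kills a would-be Stokeslet ($\propto |x|^{-1}$) term and makes the $\na G$-, $\na^2 G$-expansions in \eqref{u_N-u.1st.order}--\eqref{u_N-u.xi.1st.order} possible. For $w$ defined near $\mB_i$, let $\Psi_i[w]$ be the unique decaying solution of the exterior Stokes problem on $\R^3\setminus\mB_i$ with $\int_{\partial\mB_i}\sigma(\Psi_i[w])n=0=\int_{\partial\mB_i}(x-X_i)\times\sigma(\Psi_i[w])n$ and $\Psi_i[w]|_{\mB_i}=-w+V+\Omega(x-X_i)$ for suitable $V\in\R^3,\Omega\in\skew(3)$; $\Psi_i$ is linear and annihilates rigid fields. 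The technically heaviest step is to establish, uniformly over admissible configurations (and after rescaling to the unit particle $R_{\xi_i}\mB$), the following properties of $\Psi_i$: (i) $\|\na\Psi_i[w]\|_{L^2(\R^3\setminus\mB_i)}\lesssim\|Dw\|_{L^2(\mB_i)}$; (ii) the uniform pointwise bound $\|\na^m\Psi_i[w](x)\|\lesssim r^3\,\max(r,|x-X_i|)^{-2-m}\,\|\na w\|_{L^\infty(\text{nbhd of }\mB_i)}$ for $m=0,1,2$; (iii) the multipole expansion $\Psi_i[w](x)=\na G(x-X_i):\!\bigl(r^3 S(\xi_i)(Dw)_i\bigr)+O\!\bigl(r^4|x-X_i|^{-3}\|w\|_{C^{2}}\bigr)$ (with a matching gradient statement carrying an $O(r^4|x-X_i|^{-4}\|w\|_{C^2})$ remainder), the stresslet strength being dictated exactly by the tensor $S$ of \eqref{eq:resistance.problem}; (iv) the angular-velocity identity: $\na(w+\Psi_i[w])$ is constant on $\mB_i$ and equals $M(\xi_i)\na w(X_i)+O\bigl(r\,\|w\|_{C^{1,\alpha}(\text{nbhd})}+r^2\|w\|_{C^2}\bigr)$, which is a restatement of the definition \eqref{def.M} of $M$ up to a Faxén-type correction. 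These are by now classical single-particle facts, cf.\ \cite{HillairetWu19,Hofer18MeanField,HoeferLeocataMecherbet22}; (ii)--(iii)--(iv) follow from the Oseen-tensor representation, boundary Schauder estimates and the decay of the resistance solution.

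Second, I run the iteration. Using that $v=u_N-u$ is force/torque-free on each $\mB_i$, it solves the closed system $v_i=\Psi_i\bigl[u+\sum_{j\neq i}v_j\bigr]$, $u_N=u+\sum_i v_i$. Setting $v^{(1)}_i:=\Psi_i[u]$ and $v^{(k+1)}_i:=\Psi_i\bigl[\sum_{j\neq i}v^{(k)}_j\bigr]$, (ii) shows that the step $k\mapsto k+1$ is governed at the particle centers by the interaction matrix $r^3 d_{ij}^{-3}$, so $\sup_i|\na(\sum_j v^{(k+1)}_j)(X_i)|\le C\phi S_3\,\sup_i|\na(\sum_j v^{(k)}_j)(X_i)|$ and likewise in the relevant shell norms; moreover $w_k:=u+\sum_{m\neq k}v_m$ has $\|\na w_k\|_{L^\infty(\text{nbhd of }\mB_k)}\lesssim 1+\phi S_3$. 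Choosing $\eps$ so that $C\phi S_3<\tfrac12$, the series $\sum_k\sum_i v^{(k)}_i$ converges to $u_N-u$, the tail after $K$ reflections being $O((\phi S_3)^K)$ times the first term, and each converged $v_k=\Psi_k[w_k]$ inherits from (ii) the uniform bound $\|\na v_k(x)\|\lesssim r^3\max(r,|x-X_k|)^{-3}$. In practice I keep $K=1$ for \eqref{u_N-u}, \eqref{u_N.Lipschitz}, \eqref{nabla.u-u_N}, \eqref{u_N-u.xi} and $K=2$ (with the multipole refinement of (iii)) for \eqref{u_N-u.1st.order}, \eqref{u_N-u.xi.1st.order}.

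Third, I extract the bounds. For \eqref{u_N-u}: $(u_N-u)(X_i)=v^{(1)}_i(X_i)+\sum_{j\neq i}v^{(1)}_j(X_i)+(\text{tail})$; since $\Psi_i$ annihilates the constant and rotational part of $u$, the self-term is $O(r\|\na u\|_\infty)=O(r)$, the cross terms are $\lesssim\sum_j r^3 d_{ij}^{-2}=\phi S_2$ by (ii), and the tail is $O(\phi S_3\cdot\phi S_2)$, lower order. For \eqref{u_N-u.xi}: on $\mB_i$ the converged field $u_N=w_i+v_i$ is rigid, so by (iv) $\na u_N(X_i)=M(\xi_i)\na w_i(X_i)+O(r)$, and $\na w_i(X_i)=\na u(X_i)+O(\sum_j r^3 d_{ij}^{-3})=\na u(X_i)+O(\phi S_3)$; boundedness of $M$ gives $M(\xi_i)\na u(X_i)+O(r+\phi S_3)$. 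For \eqref{nabla.u-u_N}: integrate $\na(u_N-u)=\sum_k\na v_k$ along $[X_i,X_j]$ (which, since $\dmin\ge 4r$, meets at most $O(1+r^2 d_{ij}\dmin^{-3})$ of the $\mB_k$, spending length $\lesssim r$ in each) and use $\|\na v_k(x)\|\lesssim r^3\max(r,|x-X_k|)^{-3}$ together with a dyadic estimate of $\sum_k r^3|x-X_k|^{-3}$ and $\dmin^{-3}\le N S_3$; this gives $|(u_N-u)(X_i)-(u_N-u)(X_j)|\lesssim r+d_{ij}\phi S_3=(r/d_{ij}+\phi S_3)d_{ij}$. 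Then \eqref{u_N.Lipschitz} follows from \eqref{nabla.u-u_N}, $\|\na u\|_\infty\lesssim1$, $r/d_{ij}\le\tfrac14$ and $\phi S_3<\eps$. For the first-order statements I insert (iii): $v^{(1)}_j(X_i)$ equals the $j$-th summand of \eqref{u_N-u.1st.order} up to (a) replacing the strain of $u$ by that of $w_j$, (b) the quadrupole remainder $O(r^4 d_{ij}^{-3})$, and (c) replacing $(D\,\cdot\,)_j$ by the pointwise value; summed against the $\na G$-weight these contribute $O(\phi^2 S_2 S_3)$, $O(r^4 N S_3)\lesssim r$ and $O(r\phi S_2)\lesssim r$, respectively, and adding the $O(r)$ self-term and the $O(\phi^2 S_2 S_3)$ second reflection yields \eqref{u_N-u.1st.order}. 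Finally \eqref{u_N-u.xi.1st.order} is the same analysis one derivative higher, composed with $M(\xi_i)$ via (iv): $\na w_i(X_i)$ agrees with $\na u(X_i)+\tfrac\phi N\sum_{j\neq i}\na^2 G(X_i-X_j)S(\xi_j)Du(X_j)$ up to $O((\phi S_3)^2)$ (second reflection and the strain-of-$w_j$ correction), $O(r^4 N S_4)$ (octupole remainder in (iii)) and $O(r)$ (Faxén term, averaging of $Du$, self-term).

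The main obstacle is the first step: establishing the uniform pointwise decay and multipole estimates (ii)--(iii)--(iv) for $\Psi_i$ feeding in only the control of $w$ available near $\mB_i$, and then propagating them through the iteration so that each reflection genuinely gains a factor $\phi S_3$ (which is what forces $\phi S_3$ small, consistent with the diluteness \eqref{ass:diluteness}). The secondary delicate point is the bookkeeping in the third step: obtaining exactly the exponents in $\phi^2 S_2 S_3$ and $r^4 N S_4$ — the latter being the source of the $\phi^{4/3}$ in Theorem \ref{th:app}, since $r^4 N S_4=(r^3 N S_4)\,r=\phi S_4\,r\lesssim\phi^{4/3}$ under \eqref{ass:separation} — requires expanding (iii) to the right multipole order and carefully distinguishing the strain of the background $u$ from that of the full local field $w_j$. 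Everything else is routine once (i)--(iv) are in hand.
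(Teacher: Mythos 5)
Your proposal is essentially the same as the paper's argument: you build $u_N-u$ by the method of reflections using the single-particle projection operator (your $\Psi_i$ is $-Q_i$ in the paper's notation), you rely on exactly the same four single-particle ingredients (energy bound, pointwise decay $r^3|x-X_i|^{-2-m}$, the $\nabla G:S$ multipole expansion with $O(r^4)$ remainder, and the $M(\xi_i)$ mobility identity, which are \eqref{est:Q_i.H^1}, \eqref{est:Q_i.pointwise}, \eqref{Q_i.Z}, \eqref{Q_i.M} in the paper), and you extract a $\phi S_3$ contraction per reflection. The per-particle recursion $v_i^{(k+1)}=\Psi_i[\sum_{j\neq i}v_j^{(k)}]$ is the same iteration as the paper's $v_{k+1}=v_k-\sum_iQ_iv_k$, merely indexed differently, and your bookkeeping for \eqref{u_N-u}, \eqref{u_N-u.xi}, \eqref{u_N-u.1st.order}, \eqref{u_N-u.xi.1st.order} reproduces the paper's error budget.

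The one step where you genuinely diverge is \eqref{nabla.u-u_N}, and there your route has a soft spot. Integrating $\sum_k\nabla v_k$ along the segment $[X_i,X_j]$ and summing the pointwise bound $r^3\max(r,|x-X_k|)^{-3}$ gives, for the \emph{near-but-not-touching} particles (those at distance $\dist_k$ with $r\lesssim\dist_k\lesssim d_{ij}$ from the segment), a contribution $\sum_k r^3\dist_k^{-2}$; dyadically this is $\sim r^3 d_{ij}\dmin^{-3}\log(d_{ij}/r)$, with a genuine $\log$ factor. Your claim that $\dmin^{-3}\le NS_3$ controls this is not quite enough: under the bare hypotheses of the proposition ($\dmin\ge 4r$, $\phi S_3<\eps$) one only has $NS_3\ge\dmin^{-3}$, not $NS_3\gtrsim\dmin^{-3}\log(\cdot)$, so the $\log$ may not be absorbed into $\phi S_3$. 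The root of the loss is that the line integral is blind to the cancellation that occurs when the segment passes near some $X_l$ while both endpoints $X_i,X_j$ stay at distance $\gtrsim d_{il}\vee d_{jl}$ from it. The paper sidesteps this by never integrating along the segment: for each $l\notin\{i,j\}$ it estimates $|(Q_lv_k)_i-(Q_lv_k)_j|$ directly via a case split (mean value theorem along the segment when $d_{ij}\lesssim\min(d_{il},d_{jl})$, plain triangle inequality on the endpoint values otherwise), which stays expressed in the endpoint distances $d_{il},d_{jl}$ and sums \emph{exactly} to $\phi S_3 d_{ij}$ without any $\log$. Switching your derivation of \eqref{nabla.u-u_N} to that per-particle comparison at the centers closes the gap; the rest of the proposal is sound.
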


The proof is based on the method of reflections that has been extensively used in related problems, see e.g. \cite{Hofer18MeanField, Hoefer19} for spherical particles and \cite{HillairetWu19, HoeferLeocataMecherbet22} for non-spherical particles. 
The desired pointwise estimates stated in Proposition \ref{pro:MOR} for non-spherical particles seemingly have not appeared in the literature.

To prove them, we closely follow the setup in \cite{HoeferLeocataMecherbet22}. Let $\dot H^1_{\mathfrak s}(\R^3)$ the space of solenoidal vector fields in $\dot H^1(\R^3)$.  Let $Q_i \colon \dot H^1_{\mathfrak s}(\R^3) \to \dot H^1_{\mathfrak s}(\R^3)$ be the orthogonal projection operator  $v \mapsto Q_i v$,
\begin{equation} \label{eq:Q}
\left\{
\begin{array}{ll}
	- \Delta (Q_i v) + \nabla p = 0, \quad \dv v =  0& \quad \text{ in } \R^3 \setminus \mB_i, \\
	D (Q_i v) = D v &\quad \text{ in } \mB_i, \\
0 = \int_{\partial \mB_i} \sigma(v,p) n = \int_{\partial \mB_i} \sigma(v,p) n \times (x- X_i).
	\end{array}\right.
\end{equation} 

The main point is then to show that under the assumptions of Proposition \ref{pro:MOR}
\begin{align}
    u_N = \lim_{k \to \infty} \bigg(\Id - \sum_{i} Q_i\bigg)^k u,
\end{align}
which allows to obtain the estimates in Proposition \ref{pro:MOR} through  decay estimates on $Q_i$.

For the proof, we rely on the following lemmas. 
\begin{lemma}[{\cite[Lemma 4.8]{HoeferLeocataMecherbet22}}] \label{lem:decay.Q}
Let $w \in W^{1,\infty}(\mathcal{B}_i)$ with $\dv w = 0$. There exists a universal constant $C>0$ such that for $l=0,1,2$ and all $x \in \R^3 \setminus B(X_i,2r)$
\begin{align} \label{est:Q_i.pointwise}
	|\nabla^l (Q_i w)(x)| \leq \frac {C r^3}{|x - X_i|^{l+2}} \|D w\|_{L^\infty(\B_i)} .
\end{align}
Moreover, 
	\begin{align} \label{est:Q_i.H^1}
		\|Q_i w\|_{\dot H^1(\R^3)} \leq C r^{3/2} \|D w\|_{L^\infty(\B_i)}.
	\end{align}
\end{lemma}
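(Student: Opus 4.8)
The plan is to reduce by scaling and rotation to the unit-size reference particle $\mathcal B$, deduce \eqref{est:Q_i.H^1} from the fact that $Q_i$ is an orthogonal projection, and deduce \eqref{est:Q_i.pointwise} from the boundary-integral representation of an exterior, force-free Stokes flow. Since the Stokes system --- and hence the operator $Q_i$ of \eqref{eq:Q} --- is covariant under translation, rotation and scaling, set $\bar w(y):=R_{\xi_i}^{T}w(X_i+rR_{\xi_i}y)$, which is a divergence-free field in $W^{1,\infty}(\mathcal B)$ with $\|D\bar w\|_{L^\infty(\mathcal B)}=r\|Dw\|_{L^\infty(\mathcal B_i)}$, and satisfies $(Q_iw)(x)=R_{\xi_i}\,(Q_{\mathcal B}\bar w)\big(R_{\xi_i}^{T}(x-X_i)/r\big)$, where $Q_{\mathcal B}$ denotes the analogue of $Q_i$ for $\mathcal B$ at unit scale. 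Because $\mathcal B\subset B_1$, and since all the constants appearing below (Korn, Bogovskii, Sobolev extension, trace) are invariant under rotations, both estimates then follow with the stated powers of $r$ and with constants independent of $X_i,\xi_i,r$ once we show, writing $V:=Q_{\mathcal B}\bar w$ and $M:=\|D\bar w\|_{L^\infty(\mathcal B)}$, that $\|V\|_{\dot H^1(\R^3)}\lesssim M$ and $|\na^l V(x)|\lesssim M|x|^{-l-2}$ for $|x|\ge 2$ and $l=0,1,2$.

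For the $\dot H^1$ bound, I would first observe from \eqref{eq:Q} (and uniqueness) that $V$ depends on $\bar w$ only through $D\bar w|_{\mathcal B}$, and is unchanged if $\bar w$ is modified by a rigid motion; hence we may normalise $\int_{\mathcal B}\bar w=0=\int_{\mathcal B}\curl\bar w$, so that Korn's inequality gives $\|\bar w\|_{H^1(\mathcal B)}\lesssim\|D\bar w\|_{L^2(\mathcal B)}\lesssim M$. A bounded Sobolev extension followed by a cut-off produces $W\in H^1(\R^3)$ with $W=\bar w$ on $\mathcal B$, $\supp W\subset B_{3/2}$ and $\|W\|_{H^1}\lesssim M$; correcting its divergence with the Bogovskii operator on the bounded Lipschitz domain $B_2\setminus\mathcal B$ --- the compatibility condition $\int_{B_2\setminus\mathcal B}\dv W=0$ holds since $\dv W=\dv\bar w=0$ on $\mathcal B$ and $W\in H^1_0(B_2)$ --- gives $\tilde w\in\dot H^1_{\mathfrak s}(\R^3)$ with $\tilde w=\bar w$ on $\mathcal B$ and $\|\tilde w\|_{\dot H^1}\lesssim M$. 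Since $Q_{\mathcal B}$ is an orthogonal projection on $\dot H^1_{\mathfrak s}(\R^3)$, it is a contraction, and $Q_{\mathcal B}\bar w=Q_{\mathcal B}\tilde w$, so $\|V\|_{\dot H^1}\le\|\tilde w\|_{\dot H^1}\lesssim M$, which is \eqref{est:Q_i.H^1} after undoing the scaling.

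For the pointwise decay, $V$ is a Stokes flow in $\R^3\setminus\mathcal B$ that decays at infinity, is force-free and torque-free on $\partial\mathcal B$, and satisfies $\|V\|_{\dot H^1(\R^3)}\lesssim M$. I would use the standard boundary-integral representation for such flows: for $|x|\ge2$,
\begin{align*}
V(x)=\int_{\partial\mathcal B}G(x-y)\,t(y)\,\dd s(y)+\int_{\partial\mathcal B}\Sigma(x-y)\,\nu(y)\,V(y)\,\dd s(y),
\end{align*}
where $t:=\sigma(V,p)\nu$ is the boundary traction, $p$ the associated pressure, and $\Sigma$ the $(-2)$-homogeneous stress kernel of $G$, so $|\na^l\Sigma(z)|\lesssim|z|^{-2-l}$. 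By the trace theorem and the $\dot H^1$ bound, $\|V|_{\partial\mathcal B}\|_{L^1(\partial\mathcal B)}\lesssim\|V\|_{H^1(B_2)}\lesssim M$; and since $G$ itself is divergence-free, the variational definition of the normal stress gives $\|t\|_{H^{-1/2}(\partial\mathcal B)}\lesssim\|V\|_{H^1(B_2\setminus\mathcal B)}\lesssim M$, together with $\langle t,c\rangle_{H^{-1/2},H^{1/2}}=0$ for every constant $c\in\R^3$ (force-freeness). As $\mathcal B\subset B_1$, one has $|x-y|\ge|x|/2$ on $\partial\mathcal B$ when $|x|\ge2$; hence the double-layer term and its $x$-derivatives of order $l$ are $\lesssim\|V|_{\partial\mathcal B}\|_{L^1}\sup_{\partial\mathcal B}|\na_x^l\Sigma(x-\cdot)|\lesssim M|x|^{-2-l}$, while for the single-layer term we subtract the constant $(\na^lG)(x)$, against which $t$ pairs to zero, and use $\|\na_x^lG(x-\cdot)-(\na^lG)(x)\|_{H^{1/2}(\partial\mathcal B)}\lesssim\sup_{\partial\mathcal B}|\na^{l+1}G(x-\cdot)|\lesssim|x|^{-2-l}$ to bound $\big|\na_x^l\!\int_{\partial\mathcal B}G(x-y)t(y)\,\dd s(y)\big|\lesssim\|t\|_{H^{-1/2}(\partial\mathcal B)}|x|^{-2-l}\lesssim M|x|^{-2-l}$. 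Adding the two contributions and undoing the scaling yields \eqref{est:Q_i.pointwise} for $l=0,1,2$.

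The only genuine subtlety I expect is the low regularity available: $\bar w$ is merely Lipschitz and the traction $t$ lies only in $H^{-1/2}(\partial\mathcal B)$, so one must avoid any $L^\infty$-Korn inequality and run the argument entirely through $L^2$-based Korn/trace/normal-stress estimates together with the energy bound of the second step. The decisive structural point is that for a force-free exterior Stokes flow the leading Stokeslet contribution $O(|x|^{-1})$ cancels --- this is exactly what upgrades the decay from $|x|^{-1}$ to the claimed $|x|^{-2}$; the torque-free condition plays no role at this level of precision.
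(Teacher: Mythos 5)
The paper does not prove this lemma; it is quoted verbatim from \cite[Lemma 4.8]{HoeferLeocataMecherbet22}, so there is no in-paper argument to compare against. Your proof is correct and follows the standard route for such estimates: rescaling to the unit reference particle, obtaining \eqref{est:Q_i.H^1} from a Korn--extension--Bogovskii construction of a solenoidal competitor combined with the fact that $Q_i$ is an orthogonal projection (hence a contraction) on $\dot H^1_{\mathfrak s}$, and obtaining \eqref{est:Q_i.pointwise} from the layer-potential representation of the exterior flow, where the force-free condition cancels the $O(|x|^{-1-l})$ Stokeslet term. The only points a referee might ask you to spell out are (i) the validity of the boundary-integral representation for a finite-energy exterior Stokes flow (which presupposes the standard a priori decay $V=O(|x|^{-1})$, $p=O(|x|^{-2})$ for such flows, or alternatively a multipole expansion of $G*F$ with $F$ the stress jump supported on $\partial\mathcal B$), and (ii) the normalization of the pressure entering the traction $t$ — harmless here since a constant pressure pairs to zero against $G(x-\cdot)\nu$ on $\partial\mathcal B$; neither affects the correctness of the argument.
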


\begin{lemma}
Let $w \in W^{2,\infty}(\mathcal{B}_i)$ with $\dv w = 0$.   Then,  exists  $C>0$ such that 
    \begin{align} \label{Q_i.M}
        |(\nabla w - \nabla Q_i w)_i - M(\xi_i) (\nabla w)_i| \leq C r \|\nabla D w\|_{L^\infty(B_i)} . 
    \end{align}
    Moreover, for $l=0,1$ and all $x \in \R^3 \setminus B(X_i,4r)$
    \begin{align} \label{Q_i.Z}
       & \left|\nabla^l\left((Q_i w)(x) - r^3 \nabla G(x-X_i) : S(\xi_i) (Dw)(X_i)\right)\right| \\
        &\qquad \leq C  \frac{r^4}{|x-X_i|^{3+l}}\left(\|D w\|_{L^\infty(\B_i)} + |x-X_i|\|\nabla D w\|_{L^\infty(B_i)}\right).
    \end{align}
\end{lemma}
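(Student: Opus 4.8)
The plan is to reduce both estimates to a one-particle analysis obtained by rescaling to unit size: \eqref{Q_i.M} will follow from the invertibility of the Stokes resistance operator, and \eqref{Q_i.Z} from the far-field multipole expansion of the exterior Stokes flow around a single particle. I would first record two elementary facts. Since $\mathcal{B}$ is connected, $Q_i v$ depends on $v$ only through $D v|_{\mathcal{B}_i}$ — two fields with the same symmetric gradient on $\mathcal{B}_i$ differ by a rigid motion, and rigid motions lie in $\ker Q_i$ — and the rescaling $\widehat v(y):=r^{-1}v(X_i+ry)$, $\widehat p(y):=p(X_i+ry)$ maps the problem \eqref{eq:Q} for $\mathcal{B}_i=X_i+rR_{\xi_i}\mathcal{B}$ to the one for the unit particle $R_{\xi_i}\mathcal{B}$, preserving $\nabla\widehat v(y)=\nabla v(X_i+ry)$, hence the strain field and the angular velocity of the rigidification. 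Second, by the pointwise identity $\partial_i\partial_j w_k=\partial_i(Dw)_{jk}+\partial_j(Dw)_{ik}-\partial_k(Dw)_{ij}$ one has $\|\nabla^2 w\|_{L^\infty(\mathcal{B}_i)}\lesssim\|\nabla Dw\|_{L^\infty(\mathcal{B}_i)}$, so writing
\[ w(x)=w(X_i)+A(x-X_i)+\rho(x),\qquad A:=Dw(X_i)\in\R^{3\times 3}_0,\quad L_A(x):=A(x-X_i), \]
the remainder $\rho$ is divergence-free with $\nabla\rho(X_i)=0$ and $\|\nabla\rho\|_{L^\infty(\mathcal{B}_i)}\lesssim r\|\nabla Dw\|_{L^\infty(\mathcal{B}_i)}$. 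Since constants and linear rigid rotations lie in $\ker Q_i$, we get $Q_iw=Q_iL_A+Q_i\rho$; after rescaling (noting $\widehat{L_A}=L_A$) and matching with \eqref{eq:resistance.problem}, $Q_iL_A$ is, up to the obvious scaling factors, the unit-particle resistance flow $v_A$.

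For \eqref{Q_i.M}: since $D(Q_iw)=Dw$ in $\mathcal{B}_i$, the field $w-Q_iw$ is rigid on $\mathcal{B}_i$, so $\nabla(w-Q_iw)|_{\mathcal{B}_i}$ is a constant skew matrix; likewise $\nabla(\rho-Q_i\rho)|_{\mathcal{B}_i}=:\Omega_i(\rho)$ is a constant skew matrix. Using $\nabla(L_A-Q_iL_A)|_{\mathcal{B}_i}=A+\nabla v_A(0)=M(\xi_i)A$, which is exactly \eqref{def.M}, together with $(\nabla w)_i=A+(\nabla\rho)_i$, the decomposition gives $(\nabla w-\nabla Q_iw)_i=M(\xi_i)A+\Omega_i(\rho)$, whence
\[ (\nabla w-\nabla Q_iw)_i-M(\xi_i)(\nabla w)_i=\Omega_i(\rho)-M(\xi_i)(\nabla\rho)_i. \]
The second term is controlled by $|(\nabla\rho)_i|\lesssim r\|\nabla Dw\|_{L^\infty(\mathcal{B}_i)}$. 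For $\Omega_i(\rho)$ I rescale: it is the rigid-body angular velocity produced by the exterior Stokes problem for $R_{\xi_i}\mathcal{B}$ with interior data $\widehat\rho$, a bounded linear functional of $\nabla\widehat\rho|_{R_{\xi_i}\mathcal{B}}$ — uniformly in $\xi_i$, as $R_{\xi_i}\mathcal{B}$ is a rotation of the fixed body $\mathcal{B}$ and the resistance operator is invertible — so $|\Omega_i(\rho)|\lesssim\|\nabla\widehat\rho\|_{L^\infty}=\|\nabla\rho\|_{L^\infty(\mathcal{B}_i)}\lesssim r\|\nabla Dw\|_{L^\infty(\mathcal{B}_i)}$. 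This gives \eqref{Q_i.M}.

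For \eqref{Q_i.Z}: the $Q_i\rho$-contribution is bounded via Lemma \ref{lem:decay.Q} applied to $\rho$ (with $\|D\rho\|_{L^\infty(\mathcal{B}_i)}\lesssim r\|\nabla Dw\|$), giving $|\nabla^l Q_i\rho(x)|\lesssim r^3|x-X_i|^{-l-2}\cdot r\|\nabla Dw\|$, which is precisely the $\|\nabla Dw\|$-term on the right of \eqref{Q_i.Z}. For $Q_iL_A$, I would use the far-field multipole expansion of $v_A$: it is a Stokes flow in the exterior of $R_{\xi_i}\mathcal{B}\subset B_1(0)$, decaying at infinity, with zero net force and zero net torque, so the Green representation formula together with Taylor expansion of the Stokeslet and stresslet kernels at the particle centre and the definition of $S$ (the $+2A|\mathcal{B}|$ entering $S$ being exactly the contribution of the double-layer potential of the affine boundary values) yield $\nabla^l\bigl(v_A(y)-\nabla G(y):S(\xi_i)A\bigr)=O\bigl(|y|^{-3-l}|A|\bigr)$ for $|y|\ge 3$, uniformly in $\xi_i\in\S^2$ and in $A$. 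Combining this with the scaling relation between $Q_iL_A$ and $v_A$ and the $(-2-l)$-homogeneity of $\nabla^l\nabla G$ (so that the scaling factors collapse into $r^3\nabla G(x-X_i)$ at every derivative order) one obtains $\bigl|\nabla^l\bigl(Q_iL_A(x)-r^3\nabla G(x-X_i):S(\xi_i)A\bigr)\bigr|\lesssim r^4|x-X_i|^{-3-l}|A|$ with $|A|=|Dw(X_i)|\le\|Dw\|_{L^\infty(\mathcal{B}_i)}$. Adding the two contributions yields \eqref{Q_i.Z}.

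The genuinely delicate step is this far-field expansion with the leading coefficient identified as precisely $S(\xi_i)A$: one must combine the single-layer stresslet of $v_A$ with the double-layer term coming from its affine boundary values and check the resulting constant, its sign, and its uniformity in $\xi_i$, in $A$, and at the level of derivatives. This is by now standard Stokes-flow multipole analysis, carried out in closely related situations in \cite{HillairetWu19,HoeferLeocataMecherbet22,Gerard-VaretHillairet19}; the resistance bound for $\Omega_i(\rho)$ and the pointwise Korn identity are routine, so I do not expect any obstacle beyond careful bookkeeping.
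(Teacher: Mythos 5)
Your proposal is correct and follows essentially the same route as the paper: linearize $w$ near $X_i$, identify $Q_i$ applied to the linear part with the single-particle resistance flow to produce $M(\xi_i)$ resp.\ the dipole $r^3\nabla G:S(\xi_i)Dw(X_i)$, and absorb the remainder via the decay estimates of Lemma \ref{lem:decay.Q}. Two remarks. First, the ``genuinely delicate'' far-field identification you flag is exactly \cite[Proposition 2.2]{HillairetWu19}, which the paper cites (after rescaling) rather than re-derives; your sketch of how to prove it is consistent with that reference, so nothing is missing there. Second, there is a notational slip you should repair: you set $A:=Dw(X_i)$ (the \emph{symmetric} gradient) but then assert $\nabla\rho(X_i)=0$ and $\|\nabla\rho\|_{L^\infty(\mathcal{B}_i)}\lesssim r\|\nabla Dw\|_{L^\infty(\mathcal{B}_i)}$, which require $A=\nabla w(X_i)$ (the full gradient). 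With the symmetric-gradient reading, $\nabla\rho$ retains the $O(\|\nabla w\|_\infty)$ constant skew part of $\nabla w(X_i)$, so your separate bounds on $\Omega_i(\rho)$ and on $M(\xi_i)(\nabla\rho)_i$ both fail (the final difference would still close only because $M(\xi_i)$ acts as the identity on skew matrices, cf.\ \eqref{def.M}); taking $A=\nabla w(X_i)$ makes every step literally correct. The paper instead linearizes at the \emph{averaged} full gradient $(\nabla w)_i$, which makes $(\nabla(w-\tilde w))_i$ vanish exactly and reduces \eqref{Q_i.M} to a single energy estimate for $Q_i(w-\tilde w)$, at the cost of an extra commutation $|S(\xi_i)((Dw)_i-Dw(X_i))|\lesssim r\|\nabla Dw\|_{L^\infty(\mathcal{B}_i)}$ in \eqref{Q_i.Z}; these differences are cosmetic.
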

\begin{proof}
    Let $\tilde w(x) = (\nabla w)_i x$. By definition of $M(\xi_i)$ (cf. \eqref{def.M}), we have with 
    \begin{align}
      (\nabla w)_i -  M(\xi_i) (\nabla w)_i =  (\nabla (Q_i\tilde w))_i.
    \end{align}
    Hence, by \eqref{est:Q_i.H^1}
    \begin{align}
        |(\nabla w - \nabla Q_i w)_i - M(\xi_i) (\nabla w)_i| &= |(\nabla Q_i (w-\tilde w))_i|\\ &\lesssim r^{-3/2} \|\nabla  Q_i (w-\tilde w)\|_{L^2(\R^3)}  \\
        &\lesssim \|D (w - \tilde w)\|_{L^\infty(B_i)} \lesssim r \|\nabla D w\|_{L^\infty(B_i)}.
    \end{align}

    To prove \eqref{Q_i.Z}, we may assume $X_i=0$. Then, we observe that  \cite[Proposition 2.2]{HillairetWu19} implies that for all $|x| \geq 4r$\footnote{ \cite[Proposition 2.2]{HillairetWu19} gives the estimate for  $r=1$, for general $r$ it  follows from a scaling argument. In the notation of \cite{HillairetWu19}, $\mathcal U$ corresponds to $G$ and $\mathbb M[A,\B]$ corresponds to $S(\xi)  (D w)_i$. Also note the different sign convention for the normal $n$ on $\partial \B$. }
    \begin{align}
        |\nabla^l (Q_i \tilde w -  r^3 \nabla G:S(\xi_i)(D w)_i)(x)| \lesssim \frac{r^{4}}{|x|^{3+l}} |(D w)_i|.
    \end{align}
    Moreover, by \eqref{est:Q_i.pointwise}
    \begin{align}
        |\nabla^l (Q_i (w -\tilde w))(x)| \lesssim \frac{r^3}{|x|^{l+2}} \|D (w-\tilde w)\|_{L^\infty(\B_i)} \lesssim \frac{r^4}{|x|^{l+2}} \|\nabla D w\|_{L^\infty(\B_i)}.
    \end{align}
    Combining these estimates with $|S(\xi)((D w)_i - (Dw)(X_i))| \lesssim r \|\nabla D w\|_{L^\infty(\B_i)}$ and the decay $|\nabla^k G|(x) \lesssim |x|^{-k-1}$ finishes the proof.
\end{proof}

\begin{proof}[Proof of Proposition \ref{pro:MOR}]
    We define iteratively
    \begin{align}
        v_0 := u, \qquad v_{k+1} = v_k - \sum_{i} Q_i v_k. 
    \end{align}

    Using the fact that $D(Q_i v_k) = D v_k$ in $\B_i$ we get 
    $$Dv_{k+1}= D v_k - \underset{j}{\sum} D Q_j v_k=- \underset{j \neq i}{\sum} D Q_j v_k, \text{ in } \B_i.  $$ 
    Thus, using again Lemma \ref{lem:decay.Q}, we get
	\begin{align*} 
		\sup_i \|D v_{k+1}\|_{L^\infty(\B_i)} &\leq \sup_i \sum_{j \neq i}  \|D(Q_j v_k)\|_{L^\infty(\B_i)} \\
		& \leq \sup_i \sum_{j \neq i} \frac{C r^3}{d_{ij}^3}  \|D v_{k}\|_{L^\infty(\B_j)}\\
		& \leq  C \phi S_3 \sup_j \|D v_{k}\|_{L^\infty(\B_j)}.
	\end{align*}
	Hence, by iteration
	\begin{align} \label{est:Dv_k}
		\sup_i \|D v_{k}\|_{L^\infty(\B_i)} \leq (C \phi S_3)^k \sup_i\|D u\|_{L^\infty( \B_i)} \lesssim  (C \phi S_3)^{k}.
	\end{align}
    where in the last estimate we used $v_0 = u \in W^{1,\infty}$ (cf. Theorem \ref{th:well-posedness}).   Choosing $\eps$ from the statement of the proposition such that $C \eps \leq \frac 1 2 $ with $C$ from above, we find that
    $\sup_i \|D v_{k}\|_{L^\infty(\B_i)} \to 0$ as $k \to 0$. We show that this implies that $v_k \to u_N$ in $\dot H^1(\R^3)$. Indeed, the difference $w_k := v_k - u_N$ solves 
\begin{equation} 
\begin{aligned}
 -\Delta w_k + \na p_k  = g, \quad \div w_k = 0, & \qquad \text{in } \R^3 \setminus \cup_i \mB_i \\ 
D(w_k)\vert_{\mB_i}  = D v_k &\qquad \forall i, \\
0 = \int_{\pa \mB_i} \sigma(w_k)n \dd s(x) = \int_{\pa \mB_i} \sigma(w_k)n  \times (x-X_i) \dd s(x) & \qquad \forall i,
 \end{aligned}
\end{equation}
and thus $w_k$ minimizes the Dirichlet energy among all functions $\varphi $ with $D \varphi = D v_k$ in $\cup_i \B_i$. By a classical extension operator (cf. \cite[Lemma 4.6]{HoeferLeocataMecherbet22}) we thus find
\begin{align}
    \|w_k\|_{\dot H^1(\R^3)} \lesssim  \|D v_k\|_{L^2(\cup_i \B_i)} \lesssim  C \phi^{\frac 1 2} (C \phi S_3)^{k} \to 0 \quad \text{as } k \to \infty.
\end{align}

To show \eqref{u_N-u}, we observe that \eqref{est:Q_i.H^1} and Sobolev embedding implies for any $w \in \dot H^1_\sigma(\R^3)$
\begin{align} \label{(Q_iw)_i}
    |(Q_i w)_i| \lesssim r^{-3+5/2} \|Q_i w\|_{L^6(B_i)} \lesssim r \|Dw\|_{L^\infty(B_i)}.
\end{align}
Hence, using also \eqref{est:Q_i.pointwise} and \eqref{est:Dv_k},
\begin{align}
    |(u_N - u)_i| &\leq  \sum_{k=0}^\infty |(v_{k+1} - v_k)_i| \leq \sum_{k=0}^\infty \sum_{j} |(Q_j v_k)_i| \\
    &\lesssim \sum_{k=0}^\infty \bigg( r \|D v_k\|_{L^\infty(B_i)} + \sum_{j\neq i} \frac{r^3}{d_{ij}^2} \|D v_k\|_{L^\infty(B_j)} \bigg) \\
    &\lesssim  (r + \phi S_2)  \sum_{k=0}^\infty(C \phi_n S_3)^{k} \lesssim  (r + \phi S_2).
\end{align}
Using that $(u_N)_i = u_N(X_i)$, since $D u_N = 0$ in $B_i$, and $\|\nabla u\|_{L^\infty} \lesssim 1$ yields \eqref{u_N-u}.

\medskip

For \eqref{nabla.u-u_N}, we compute in a similar way
\begin{align}
    |(u_N - u)_i - (u_N-u)_j| &\leq  \sum_{k=0}^\infty |(v_{k+1} - v_k)_i - (v_{k+1} - v_k)_j| \\
    &\leq \sum_{k=0}^\infty \sum_{l} |(Q_l v_k)_i - (Q_l v_k)_j| \\
    &\lesssim \sum_{k=0}^\infty \bigg(r + \frac{r^3}{d_{ij}^2}\bigg) \left(\|D v_k\|_{L^\infty(B_i)} + \|D v_k\|_{L^\infty(B_j)}\right) \\
    &+ \sum_{k=0}^\infty \sum_{l\not \in \{i,j\}} r^3 d_{ij}
 \bigg(\frac 1 {d_{il}^3}  + \frac 1 {d_{jl}^3} \bigg) \|D v_k\|_{L^\infty(B_l)} \\
        &\lesssim \sum_{k=0}^\infty d_{ij} \bigg(\frac r {d_{ij}} +  \sum_{l\neq i}\frac{r^3}{d_{il}^3} + \sum_{l\neq j}\frac{r^3}{d_{jl}^3}\bigg )\sup_l \|D v_k\|_{L^\infty(B_l)} \\
    &\lesssim  \bigg(\frac r {d_{ij}} + \phi S_3 \bigg) d_{ij}.
\end{align}
Using again $(u_N)_i = u_N(X_i)$ for all $i$ and $\|\nabla u\|_{\infty} \lesssim 1$, this establishes \eqref{nabla.u-u_N}. Moreover, \eqref{u_N.Lipschitz} follows from \eqref{nabla.u-u_N}, $\|\nabla u\|_\infty \lesssim 1$, $r \leq d_{ij}$ and that by assumption $S_3 \phi \lesssim 1$.

\medskip

For the proof of \eqref{u_N-u.xi}, we estimate using \eqref{Q_i.M} as well as $Q_i u_N = 0$ and $Q_i Q_i = Q_i$
\begin{align} \label{u_N-u.xi.0}
    |(\nabla( u_N - u +  Q_i u))_i| &\leq \sum_{k=0}^\infty |(\nabla (\Id - Q_i)(v_{k+1} - v_k))_i| \\
    &\leq \sum_{k=0}^\infty \sum_{j \neq i} |(\nabla (\Id - Q_i)Q_j v_k)_i| \\
    &\lesssim  \sum_{k=0}^\infty \sum_{j \neq i} |(\nabla (\Id - Q_i)Q_j v_k)_i - M(\xi_i) (\nabla Q_j v_k)_i| + |M(\xi_i) (\nabla Q_j v_k)_i| \\
    & \lesssim  \sum_{k=0}^\infty \sum_{j \neq i} r \| \nabla D Q_j v_k\|_{L^\infty(B_i)} + \| \nabla Q_j v_k\|_{L^\infty(B_i)} \\
    & \lesssim \sum_{k=0}^\infty \sum_{j \neq i} \bigg(\frac{r^4}{d_{ij}^4} + \frac{r^3}{d_{ij}^3}\bigg) \|D v_k\|_{L^\infty(B_j)} \lesssim \phi S_3 .
\end{align}
On the other hand, by \eqref{Q_i.M} and $\|\nabla^2 u\|_{\infty} \lesssim 1$
\begin{align}
     |(\nabla u - \nabla Q_i u)_i - M(\xi_i) \nabla u(X_i)| \leq r.
\end{align}
Combining those estimates and using that $\nabla u_N = \mathrm{const}$ in $\B_i$ yields \eqref{u_N-u.xi}.

\medskip 

To prove \eqref{u_N-u.1st.order}, we estimate
\begin{align}
|(u_N - v_1)_i| &\leq  \sum_{k=1}^\infty |(v_{k+1} - v_k)_i| \leq \sum_{k=1}^\infty \sum_{j} |(Q_j v_k)_i| \\
    &\lesssim \sum_{k=1}^\infty \bigg( r \|D v_k\|_{L^\infty(B_i)} + \sum_{j\neq i} \frac{r^3}{d_{ij}^2} \|D v_k\|_{L^\infty(B_j)} \bigg) \\
    &\lesssim  (r + \phi S_2)  \sum_{k=1}^\infty(C \phi_n S_3)^{k} \lesssim  (r + \phi S_2) \phi S_3.
\end{align}
Moreover, we have
\begin{align}
    (v_1)_i = (u)_i - (Q_i u)_i - \sum_{j \neq i} (Q_j u)_i.
\end{align}
By \eqref{Q_i.Z} in combination with $\|\nabla u\|_{W^{1,\infty}}  \lesssim 1$ we have
\begin{align}
     \bigg|\sum_{j \neq i} (Q_j u)_i - \frac \phi N \sum_{j \neq i} \nabla G(X_j - X_i) : S(\xi_j ) D u (X_j)\bigg| 
    &\lesssim r^4 \sum_{j \neq i} \bigg( \frac{1}{d_{ij}^3} + \frac{1}{d_{ij}^2} \bigg) \\
    &\lesssim r \phi S_3  \lesssim r
\end{align}
 Combining these estimates with \eqref{(Q_iw)_i} yields \eqref{u_N-u.1st.order}.

 \medskip 

Finally, for \eqref{u_N-u.xi.1st.order} we estimate analogously as in \eqref{u_N-u.xi.0}, with the sum starting at $k=1$ instead of $k=0$: 
\begin{align}
    |(\nabla( u_N - v_1 +  Q_i v_1))_i|  
    \lesssim  (\phi S_3)^2.
\end{align}
Moreover, we have  by  \eqref{Q_i.M} and \eqref{est:Q_i.pointwise} in combination with $\|\nabla u\|_{L^\infty} \lesssim 1$
\begin{align}
     |(\nabla v_1 - \nabla Q_i v_1)_i - M(\xi_i) (\nabla v_1)_i| &\leq r \|\nabla D v_1\|_{L^\infty(\B_i)} \\
     &\leq r \sum_{j \neq i}  \|\nabla D Q_j u\|_{L^\infty(\B_i)} 
     \lesssim r^4 \sum_{j \neq i} \frac{1}{d_{ij}^4} = N r^4 S_4.
\end{align}
Applying \eqref{Q_i.M} with $w = Q_i u$ and using $Q_i Q_i = Q_i$   in combination with $\|\nabla u\|_{W^{1,\infty}}  \lesssim 1$, also yields
\begin{align}
    |M(\xi_i) (\nabla Q_i u)_i|\lesssim r
\end{align}
Using that $(\nabla v_1)_i = (\nabla u)_i - (Q_i u)_i - \sum_{j \neq i} (Q_j u)_i$
we combine these two estimates to 
\begin{align}
\bigg|(\nabla v_1 - \nabla Q_i v_1)_i - M(\xi_i) \bigg(\nabla u - \sum_{j \neq i} \nabla  Q_j u\bigg)_i\bigg| \lesssim r + r^4 N S_4.
\end{align}
Moreover, by \eqref{Q_i.Z}  in combination with $\|\nabla u\|_{W^{1,\infty}}  \lesssim 1$, we have
\begin{align}
\bigg|\sum_{j \neq i} \nabla (Q_j u)_i - \frac \phi N \sum_{j \neq i} \nabla^2 G(X_j - X_i) : S(\xi_j ) D u (X_j)\bigg| 
    &\lesssim r^4\sum_{j \neq i} \bigg( \frac{1}{d_{ij}^4} + \frac{1}{d_{ij}^3} \bigg)  \lesssim r^4 N S_4.
\end{align}
Collecting the above estimates and using that $\|M(\xi_i)\| \lesssim 1$ yields \eqref{u_N-u.xi.1st.order}.
\end{proof}

\subsection{Proof of Theorem \ref{th:app}} \label{subsec:proof_deterministic1}

In this section, we prove Theorem \ref{th:app}.
We first show that $f_N$ converges to the solution of \eqref{eq:f}--\eqref{u} and that the interparticle distances remain well controlled. \noeqref{eq:f^0} 
As discussed in the introduction, this has been already shown in \cite{Duerinckx23} but  for self-containedness we give the short proof here that relies on the estimates in Proposition \ref{pro:MOR}.

\begin{theorem} \label{th:zero.order}
     For all $T>0$ there exists $N_0 \in \N$ such that for all $N \geq N_0$ and all $t \leq T$
    \begin{align}
        \mathcal W_\infty(f_N(t),f(t)) \leq C (\mathcal W_\infty(f_N^0,f^0) +  \phi\log N) \label{W_infty.1}
    \end{align}
    \begin{align}
        \frac 1 C |X_i^0 - X_j^0|  \leq |X_i - X_j| \leq  C |X_i^0 - X_j^0|.  \label{dmin.control}\\
    \end{align}
    where $C$ depends only on $T$, $f^0$ and $g$.
\end{theorem}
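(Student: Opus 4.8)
The goal is the bound \eqref{W_infty.1} on $\mathcal{W}_\infty(f_N(t),f(t))$ together with the two-sided control \eqref{dmin.control} on interparticle distances, using the pointwise estimates of Proposition \ref{pro:MOR} and a Grönwall/bootstrap argument. The plan is to run a continuity (bootstrap) argument on the time interval $[0,T]$: we will assume a priori that on a maximal subinterval $[0,T^\ast)$ both the separation bound $\dmin(t)\ge c' N^{-1/3}$ (for some slightly smaller constant $c'$, depending on $c$ from \eqref{ass:separation}) and a crude Wasserstein bound hold, verify that Proposition \ref{pro:MOR} applies, derive improved estimates, and conclude $T^\ast=T$. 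The key point making this work is that, under \eqref{ass:separation}, the quantities $S_k$ are controlled via \eqref{est:S_k}: $\phi S_2 \lesssim \phi N^{1/3}\dmin^{-2} \lesssim \phi N^{1/3} (N^{-1/3})^{-2} = \phi N^{1/3}$ — wait, more carefully $\phi S_3 \lesssim \phi \log N\, \dmin^{-3} \lesssim \phi \log N$, which is $o(1)$ by \eqref{ass:diluteness}, so the smallness hypothesis $\phi S_3 < \eps$ of Proposition \ref{pro:MOR} is met for large $N$; also $r = (\phi/N)^{1/3} \to 0$ and $\dmin \ge 4r$ holds for large $N$.

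\textbf{Step 1: Lipschitz transport of distances.} Using \eqref{x.dot}, for $i\ne j$ we compute $\frac{\d}{\d t}|X_i-X_j| \le |u_N(X_i)-u_N(X_j)| \lesssim d_{ij}$ by \eqref{u_N.Lipschitz}. Likewise from below, $\frac{\d}{\d t}|X_i-X_j| \ge -|u_N(X_i)-u_N(X_j)| \gtrsim -d_{ij}$. Grönwall then gives $e^{-Ct}|X_i^0-X_j^0| \le |X_i(t)-X_j^0| \le e^{Ct}|X_i^0-X_j^0|$, which is \eqref{dmin.control} with $C = e^{CT}$ — and in particular $\dmin(t)\ge e^{-CT}\dmin(0) \ge c e^{-CT} N^{-1/3}$, so the separation assumption is propagated (with a worse constant) and the a priori hypotheses of the bootstrap are recovered with room to spare.

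\textbf{Step 2: stability estimate for the empirical measure.} Since $f_N$ is transported by the vector field $(x,\xi)\mapsto (u_N(x), \tfrac12 \curl u_N(x)\times \xi)$ while $f$ is transported by $(u(x), M(\xi)\nabla u(x)\xi)$ — no wait, the zero-order equation \eqref{eq:f} has $\dv_\xi((M(\xi)\nabla u)\xi f)$, so $f$ is transported by $(u(x), (M(\xi)\nabla u(x))\xi)$ as in \eqref{Phi}--\eqref{Xi}. The standard way to bound $\mathcal{W}_\infty$ between two push-forwards is: let $(\Phi,\Xi)$ be the flow of the limit field and compare the particle trajectory $(X_i(t),\xi_i(t))$ with $(\Phi(t,Y_i),\Xi(t,Y_i,\eta_i))$ where $(Y_i,\eta_i)$ is an optimal coupling realizing $\mathcal{W}_\infty(f_N^0,f^0)$. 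Writing $D(t) := \max_i \big(|X_i(t)-\Phi(t,Y_i)| + |\xi_i(t)-\Xi(t,Y_i,\eta_i)|\big)$, we differentiate and split the velocity difference into (a) $|u_N(X_i) - u(X_i)|$, which is $\lesssim r + \phi S_2 \lesssim \phi\log N$ by \eqref{u_N-u} and \eqref{est:S_k} — actually one should double-check the exponent, $r = (\phi/N)^{1/3}$ is tiny and $\phi S_2 \lesssim \phi N^{-1/3}\dmin^{-2}\cdot N \cdot N^{-1}$... using \eqref{est:S_k}, $NS_2 \lesssim N^{1/3}/\dmin^2 \lesssim N^{1/3}\cdot N^{2/3} = N$, so $\phi S_2 \lesssim \phi$, hence this term is $O(\phi)$, in fact $O(\phi\log N)$ is safely an upper bound; (b) $|u(X_i) - u(\Phi(t,Y_i))| \lesssim D(t)$ by Lipschitz regularity of $u$ from Theorem \ref{th:well-posedness}; and similarly for the $\xi$-component one uses \eqref{u_N-u.xi} to compare $\tfrac12\curl u_N(X_i)\times\xi_i$ with $M(\xi_i)\nabla u(X_i)\xi_i$ up to $O(r+\phi S_3) = O(\phi\log N)$, plus Lipschitz dependence of $M(\cdot)\nabla u(\cdot)\cdot$ to get $O(D(t))$. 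Grönwall yields $D(t) \le C(\mathcal{W}_\infty(f_N^0,f^0) + \phi\log N)$ on $[0,T]$, and since $\mathcal{W}_\infty(f_N(t),f(t)) \le D(t) + \mathcal{W}_\infty\big((\Phi(t,\cdot),\Xi(t,\cdot))\#f_N^0, f(t)\big)$ and the second term vanishes (both equal $(\Phi(t),\Xi(t))\#$ of measures that are $\mathcal W_\infty$-close, and $(\Phi(t),\Xi(t))$ is Lipschitz), this gives \eqref{W_infty.1}.

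\textbf{Main obstacle.} The delicate point is the interplay between Steps 1 and 2: Proposition \ref{pro:MOR} requires $\dmin \ge 4r$ and $\phi S_3 < \eps$, and these in turn depend on the $X_i(t)$ staying well-separated, which is what Step 1 provides — but Step 1's estimate \eqref{u_N.Lipschitz} itself comes from Proposition \ref{pro:MOR}. This circularity is resolved by the continuity/bootstrap structure: one defines $T^\ast$ as the supremum of times where, say, $\dmin(t) \ge 2c e^{-2CT}N^{-1/3}$ holds, checks that on $[0,T^\ast)$ the hypotheses of Proposition \ref{pro:MOR} are satisfied (using $\phi\log N \to 0$), derives the \emph{improved} bound $\dmin(t) \ge c e^{-CT}N^{-1/3}$ from Step 1, and concludes by a standard open-closed argument that $T^\ast \ge T$. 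The only genuinely quantitative input is that all error terms ($r$, $\phi S_2$, $\phi S_3$) are $O(\phi\log N) = o(1)$, guaranteeing the a priori constants can be strictly improved. Everything else is Grönwall and the push-forward characterization of $\mathcal W_\infty$.
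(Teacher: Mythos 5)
Your approach is the same as the paper's: both establish the distance control \eqref{dmin.control} first via \eqref{u_N.Lipschitz} and Gr\"onwall, verify the hypotheses of Proposition~\ref{pro:MOR} through a continuity/bootstrap argument relying on $\phi \log N \to 0$, and then compare the microscopic flow to the limit flow along a transport plan obtained by composing the initial optimal coupling with the two flows. Your $D(t)$ plays exactly the role of the quantity $\eta(t)$ in the paper, and the Gr\"onwall structure is identical.

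However there is a genuine gap in the handling of the $r$ term. Proposition~\ref{pro:MOR} gives $|u_N(X_i) - u(X_i)| \lesssim r + \phi S_2$ and $|\nabla u_N(X_i) - M(\xi_i)\nabla u(X_i)| \lesssim r + \phi S_3$; with $S_2 \lesssim 1$ and $S_3 \lesssim \log N$ the Gr\"onwall step yields
\[
 D(t) \lesssim \mathcal W_\infty(f_N^0,f^0) + r + \phi \log N,
\]
not $\mathcal W_\infty(f_N^0,f^0) + \phi\log N$ as you claim. Your assertion that $r + \phi S_2 \lesssim \phi \log N$ is not justified: with $r = (\phi/N)^{1/3}$ the inequality $r \lesssim \phi \log N$ is equivalent to $N^{-1} \lesssim r^2 \log N$, which fails for all sufficiently small particle radii. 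The paper closes this gap not by comparing $r$ with $\phi\log N$ but by absorbing $r$ into the \emph{initial} Wasserstein distance, using the fact that the $\mathcal W_\infty$ distance between an $N$-point empirical measure and a bounded continuous density on a $5$-dimensional space is at least of order $N^{-1/5}$, which dominates $r \lesssim \dmin \lesssim N^{-1/3}$; this yields $r \lesssim \mathcal W_\infty(f_N^0,f^0)$ and hence the stated bound. Without this discretization lower bound, the conclusion as stated does not follow from the Gr\"onwall estimate alone. Apart from this, the proposal is correct and matches the paper's proof.
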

\begin{remark}
    In particular, combining  \eqref{est:S_k}, \eqref{dmin.control} and \eqref{ass:separation} we have uniformly on $[0,T]$, for all $N$ sufficiently large
    \begin{align}
        S_2 &\lesssim 1 \label{S_2.est},\\
        S_3 &\lesssim \log N \label{S_3.est}, \\
        S_4 &\lesssim \dmin^{-1}(0) \lesssim N^{1/3}.\label{S_4.est}
    \end{align}
\end{remark}
\begin{proof}
We first show \eqref{dmin.control}. Indeed, this follows immediately from \eqref{u_N.Lipschitz} and a Gronwall argument as long as the assumptions of Proposition \ref{pro:MOR} are fulfilled. By \eqref{est:S_k} and assumptions \eqref{ass:separation}--\eqref{ass:diluteness}, we have
\begin{align}
    \phi S_3(0) \lesssim \phi \log N \to 0.
\end{align}
Hence, the assumptions of Proposition \ref{pro:MOR} are fulfilled at time $t=0$ for $N$ large enough. Moreover, at later time they are still fulfilled for sufficiently large $N$  once we have shown \eqref{dmin.control}. A standard continuity argument then yields that both \eqref{dmin.control} 
and the assumptions of Proposition \ref{pro:MOR} are fulfilled on $[0,T]$ for $N$ large enough.

    Let $T_0$ be an optimal transport plan for $W_\infty(f_N^0,f^0)$, i.e. $f_N^0 =(T_0)_\# f^0 $ and 
$|T_0(x,\xi) - (x,\xi)| \leq \mathcal W_\infty(f_N^0,f^0) $ for almost all $(x,\xi)$ in $\supp f^0$. Let $Y = (Y_{1}, Y_{2})$ and $Y^N = (Y^N_1,Y^N_2)$ be the flow maps associated to the limit system \eqref{eq:f}--\eqref{u} and the microscopic system \eqref{main}, \eqref{x.dot}--\eqref{xi.dot} respectively, i.e.
\begin{align}
    \partial_t Y(t;s,x,\xi) &= \Big(u(t,Y_1(t;s,x,\xi)),(M(Y_2(t;s,x,\xi))\nabla u(t,Y_1(t;s,x,\xi)))Y_2(t;s,x,\xi)\Big), \\  Y(s;s,x,\xi) &= (x,\xi),
\end{align}
and 
\begin{align}
    \partial_t Y^N(t;s,x,\xi) &= \Big(u_N(t,Y^N_1(t;s,x,\xi)),  \nabla u_N(t,Y_1(t;s,x,\xi))  Y^N_2(t;s,x,\xi)\Big), \\
    Y^N(s;s,x,\xi) &= (x,\xi)
\end{align}
We will only evaluate $Y^N(t;s,x,\xi)$ for $(x,\xi) = (X_i(s),\xi_i(s))$ for some $1 \leq i \leq N$ such that there is no issue with the pointwise evaluation of $u_N$ and $\nabla u_N$ for the latter system. Moreover, we remind that $\nabla u_N = \frac{1}{2} (\curl u_N \, \times)$ at each particle location, which explains why we replaced the skew-symmetric part by the full gradient. 

We define $T_t = Y^N(t,0,\cdot) \circ T_0 \circ Y(0,t,\cdot)$.
We then have that $T_t$ is a transport plan for $(f_N(t),f(t))$ and define
\begin{align}
  \eta(t) :=   f(t,\cdot) - \esssup |T_t(x,\xi) - (x,\xi)|.
\end{align}
Let $(x,\xi) \in \supp f(t)$, let $1 \leq i \leq N$ be such that $(X_i,\xi_i) = T_0(Y_1(0;t,x,\xi),Y_2(0;t,x,\xi))$. Then, abbreviating $(x_s,\xi_s) = Y_1(s;t,x,\xi),Y_2(s;t,x,\xi)$ and using that $D u_N(s,X_i(s)) = 0$, we find as long as the assumptions of Proposition \ref{pro:MOR} are fulfilled
\begin{align}
    |T_t(x,\xi) - (x,\xi)| &=|(X_i(t),\xi_i(t)) - (x,\xi)| \\
    &\leq  |(X_i(0),\xi_i(0)) - (x_0,\xi_0)| \\
    &+      \int_0^t |u_N(s,X_i(s)) - u(s,x_s)| + | \nabla u_N(s,X_i(s)) \xi_i(s)) - M(\xi_s)\nabla u(s,x_s) \xi_s| \dd s \\
    & \leq \eta(0) + \int_0^t \eta(s) + r + \phi (S_2(s) + S_3(s)) \dd s, 
\end{align}
where we used \eqref{u_N-u}, \eqref{u_N-u.xi} as well as $u \in W^{2,\infty}(\R^3), M \in W^{1,\infty}(\S^2)$ in the last estimate. Taking the essential supremum in this inequality and inserting the estimate \eqref{S_2.est}--\eqref{S_3.est} yields
\begin{align}
    \eta(t) \lesssim \eta(0) +  r + \phi \log N.
\end{align}
By construction $\eta(0) = W_\infty(f_N^0,f^0)$ and 
    $W_\infty(f_N(t),f(t)) \leq \eta(t)$. Hence
    \begin{align}
        W_\infty(f_N(t),f(t)) \lesssim W_\infty(f_N^0,f^0) +  r + \phi \log N.
    \end{align}

   To eliminate $r$, we use that the Wasserstein distance between an empirical measure of $N$ particles and a continuous (and bounded) density is bounded below by $N^{-1/d}$ where $d$ is the space dimension (cf. e.g. \cite[Eq. (1.11)]{HoferSchubert23}). Since the dimension of $\R^3 \times \S^2$ is $5$,
    \begin{align} \label{discretisation.error}
        W_\infty(f_N^0,f^0)) \gtrsim N^{-\frac 1 5} \gtrsim \dmin^{3/5} \gtrsim r.
    \end{align} 
This finishes the proof.
\end{proof}

\begin{proof}[Proof of Theorem \ref{th:app}]
We already proved \eqref{dmin.control.thm}, see \eqref{dmin.control}. It therefore suffices to prove that, for $t \leq T$,  we have for all $N$ sufficiently large and all $1 \leq i \leq N$ 
\begin{align}
  |X_i^0 - X_j^0|  &\lesssim |X_i^{app} - X_j^{app}| \lesssim |X_i^0 - X_j^0| \label{dmin.control.app},
\end{align}
and
        \begin{align} \label{app.positions.orientations}
        |(X_i,\xi_i)(t) - (X_i^{app}, \xi_i^{app})(t)| \lesssim \phi^{4/3} +  \phi \log N (\phi (|\log \phi| + \log N)  + \mathcal W_\infty(f_N^0,f^0) ). \qquad 
    \end{align}
The statement then follows by considering the transport map $T_t$ that maps $(X_i,\xi_i)(t)$ to $(X_i^{app}, \xi_i^{app})(t)$. 
In the following, let $T >0$ be fixed  and let $N$ be sufficiently large such that the conclusions of Proposition \ref{pro:MOR} and Theorem \ref{th:zero.order} hold on $[0,T]$.
Moreover, let $t \in [0,T]$ and $1\leq i,j \leq N$\\[3mm]
\emph{Step 1: Proof of \eqref{dmin.control.app}.}

Through a Gronwall argument, the estimate \eqref{dmin.control.app}  follows from the Lipschitz continuity of $u_\phi$ provided  by \eqref{regularity.u}.\\[3mm]
\emph{Step 2: Proof that
\begin{align}
     |X_i(t) - X_i^{app}(t)| &\lesssim  r + \phi (\phi \log \phi + \mathcal W_\infty(f_N^0,f^0)|\log \mathcal W_\infty(f_N^0,f^0)|) , \label{app.positions.1}
\end{align}
    }

We first show the weaker estimate
\begin{align}
         |X_i(t) - X_i^{app}(t)| &\lesssim  r+\phi,  \label{app.positions.0}
\end{align}
We estimate
\begin{align*}
    |X_i(t) - X_i^{app}(t)| &\leq \int_0^t |u_N(s,X_i(s)) - u_\phi(s,X_i^{app}(s))| \dd s \\
    &\leq \int_0^t |X_i(s) - X_i^{app}(s)|\|\nabla u\|_\infty +  \|u - u_\phi\|_\infty +  |u_N(s,X_i(s)) - u(s,X_i(s))| \dd s \\
    &\lesssim  \phi + r + \int_0^t |X_i(s) - X_i^{app}(s)| \dd s.
\end{align*}
where we used \eqref{u_N-u}, \eqref{regularity.u}, \eqref{u-u_phi}, and \eqref{S_2.est}. Estimate \eqref{app.positions.0} follows by Gronwall's estimate.



\medskip 

For \eqref{app.positions.1}, we compute with the help of \eqref{u_N-u.1st.order}
\begin{align}
    |u_N(X_i) - u_\phi(X_i)| \leq r + \phi^2 S_2 S_3 + \bigg|u_{diff}(X_i) + \frac \phi N \sum_{j \neq i} \nabla G(X_j - X_i) : S(\xi_j ) D u (X_j)\bigg|, \qquad  \label{app.positions.1.0}
\end{align}
where $u_{diff} = u - u_\phi$ solves \eqref{u_diff}. Hence, recalling the notation $\mV[h]$ from \eqref{sigma} and using the convention $\nabla G(0) = 0$
\begin{align}
    u_{diff}(X_i) + &\frac \phi N \sum_{j \neq i} \nabla G(X_j - X_i) : S(\xi_j ) D u (X_j) = \phi (\nabla G \ast (\mV[f] D u - \mV[f_N] Du))(X_i) \\
    &= \phi (\nabla G \ast (\mV[f - f_N]   Du))(X_i).\label{app.positions.1.1}
\end{align}
Let $ T \colon \R^3 \times \S^2 \to \R^3 \times \S^2$ be an optimal transport plan for $\mathcal W_\infty(f_N,f)$, i.e. $f_N =T_\# f $ and 
$|T(x,\xi) - (x,\xi)| \leq \mathcal W_\infty(f_N,f) $ for almost all $(x,\xi)$ in $\supp f$. We denote $T = (T_1,T_2)$, where $T_1$ and $T_2$ take values in $\R^3$ and $\S^2$, respectively. Then
\begin{align}
    &(\nabla G \ast (\mV[f - f_N]   Du))(X_i) \\
    &= \int_{\R^n \times \S^2}\Big(\nabla G(X_i - x) S(\xi) D u (x) -  \nabla G(X_i - T_1(x,\xi)) S(T_2(x,\xi)) Du(T_1(x,\xi))\Big) \dd f(x,\xi) 
\end{align}
Using that the map $S \colon \S^2 \to \mathcal L(\sym_0(3);\sym_0(3))$ is smooth, \eqref{regularity.u} and the decay of $G$, we find
\begin{align}
    &|(\nabla G \ast (\mV[f - f_N]   Du))(X_i)| \\
    &\lesssim \mathcal W_\infty(f_N,f) + \mathcal W_\infty(f_N,f) \int_{(\R^3 \setminus B_{2 \mathcal W_\infty(f_N,f)}(X_i)) \times \S^2} \left( \frac{1}{|X_i - T_1(x,\xi)|^3} +  \frac{1}{|X_i - x|^3} \dd f(x,\xi) \right) \\
    &\hspace{5cm}+  \int_{B_{2 \mathcal W_\infty(f_N,f)}(X_i) \times \S^2} \left( \frac{1}{|X_i - T_1(x,\xi)|^2} +  \frac{1}{|X_i - x|^2} \right) \dd f(x,\xi) .
\end{align}
We note that for $(x,\xi) \in (\R^3 \setminus B_{2 \mathcal W_\infty(f_N,f)}(X_i)) \times \S^2$  we have $\frac{1}{|X_i - T_1(x,\xi)|^3} \lesssim \frac{1}{|X_i - x|^3}$ to estimate
\begin{align}
     \int_{(\R^3 \setminus B_{2 \mathcal W_\infty(f_N,f)}(X_i)) \times \S^2} \left( \frac{1}{|X_i - T_1(x,\xi)|^3} +  \frac{1}{|X_i - x|^3} \right) \dd f(x,\xi) \lesssim |\log{\mathcal W_\infty(f_N,f)}|
\end{align}
On the other hand, we observe that for  $(x,\xi) \in B_{2 \mathcal W_\infty(f_N,f)}(X_i) \times \S^2$ we have $|T_1(x,\xi) - X_i| \leq 3 \mathcal W_\infty(f_N,f)$. Appealing to \cite[Lemma 3.1]{Hofer&Schubert} this implies\footnote{Note that we can apply this result for the induced spatial measures $\rho_N = \frac 1 N \sum_i \delta_{X_i}$,  $\rho = \int_{\S^2} f \dd \xi$, which satisfy $\mathcal W_\infty(\rho_N,\rho) \leq \mathcal W_\infty(f_N,f)$.}
\begin{align}
    \int_{B_{2 \mathcal W_\infty(f_N,f)}(X_i) \times \S^2} \frac{1}{|X_i - T_1(x,\xi)|^2} \dd f(x,\xi) &\leq \sum_{\substack{j \neq i\\ |X_i -X_j| \leq 3 \mathcal W_\infty(f_N,f)}} \frac{1}{|X_i - X_j|^2} \\
    &\lesssim \frac{W_\infty(f_N,f)}{N^{2/3}\dmin^2} \lesssim  W_\infty(f_N,f).
\end{align}
Thus, we arrive at
\begin{align} \label{app.positions.1.3}
    |(\nabla G \ast (\mV[f - f_N]   Du))(X_i)| \lesssim  \mathcal W_\infty(f_N,f) (1 + |\log{\mathcal W_\infty(f_N,f)}|).
\end{align}
Combining  \eqref{app.positions.1.0}, \eqref{app.positions.1.1}, and \eqref{app.positions.1.3} yields
\begin{align}
    |u_N(X_i) - u_\phi(X_i)| &\lesssim r + \phi^2 (1 + S_2 S_3) + \phi W_\infty(f_N,f) (1 + |\log{\mathcal W_\infty(f_N,f)}|) \\
    &\lesssim  r + \phi (\phi \log N + \mathcal W_\infty(f_N^0,f^0)|\log \mathcal W_\infty(f_N^0,f^0)|)
\end{align}
where we used the estimates \eqref{W_infty.1} \eqref{S_2.est} and \eqref{S_3.est} in the last estimate as well as monotonicity of $z \mapsto z (1 + |\log z|)$ and $\phi + W_\infty(f_N^0,f^0)) \ll 1$.

Appealing to \eqref{X^app}, \eqref{app.positions.1} follows from the above estimate, the bound $\|\nabla u_\phi \|_\infty \lesssim 1$ from \eqref{regularity.u} and a Gronwall argument.\\[3mm]
\emph{Step 3: Proof that
\begin{align}
             |X_i(t) - X_i^{app}(t) - (X_j(t) - X_j^{app}(t))| &\lesssim r + \phi \log N |X_i^0 - X_j^0|. \label{relative.distance}
\end{align}
}
    We use that for any $h\in W^{2,\infty}(\R^d)$ and $a_i,b_i \in \R^d$, $i=1,2$, 
    \begin{align}
        & |h(a_1) - h(a_2) - (h(b_1) - h(b_2))| \\
      &  = \bigg| (a_1 - a_2 - (b_1-b_2)) \cdot \int_0^1 \na h(ta_1 + (1-t)a_2)\dd t \\
       &   + (b_1 - b_2) \cdot \int_0^1  \Big( \na h(ta_1 + (1-t)a_2)  - \na h(tb_1 + (1-t)b_2) \Big) \dd t  \bigg|
       \\
        &  \leq \|\nabla h\|_{W^{1,\infty}(\R^d)} \Big(|a_1 - a_2 - (b_1-b_2)| + (|a_1 -b_1| + |a_2-b_2|)|b_1-b_2|\Big).
    \end{align}
    Moreover, for $\bar h \in W^{1,\infty}(\R^d)$, we have
    \begin{align}
        |h(a_1) - h(a_2) - (\bar h(a_1) - \bar h(a_2))| \leq \| \nabla(h - \bar h)\|_{L^{\infty}(\R^d)}|a_1-a_2|
    \end{align}
    so that combination of these estimates yields
    \begin{align*}
        |\bar h(a_1) - \bar h(a_2) - (\bar h(b_1) -\bar h(b_2))| &\leq  
         \| \nabla(h - \bar h)\|_{L^{\infty}(\R^d)}(|a_1-a_2| + |b_1-b_2|) \\
         &\hspace{-2cm} +\|\nabla h\|_{W^{1,\infty}(\R^d)}\Big(|a_1 - a_2 - (b_1-b_2)| + (|a_1 -b_1| + |a_2-b_2|)|b_1-b_2|\Big) .
    \end{align*}
    Hence, applying this with $h = u$ and $\bar h = u_\phi$ and using \eqref{regularity.u}--\eqref{u-u_phi}, we find
    \begin{align} \label{relative.distance.0}
    \begin{aligned}
        &|X_i(t) - X_i^{app}(t) - (X_j(t) - X_j^{app}(t))| \\
        &\leq \int_{0}^t \left|u_N(s,X_i(s)) - u_N(s,X_j(s)) - \left(u_\phi(s,X_i^{app}(s)) - u_\phi(s,X_j^{app}(s))\right) \right| \dd s \\
        &\leq \int_{0}^t\Big|u_N(s,X_i(s)) - u_N(s,X_j(s)) - \left(u_\phi(s,X_i(s)) - u_\phi(s,X_j(s))\right) \Big| \dd s \\
        & \quad + C \int_{0}^t \Big|X_i(s) - X_j(s) - (X_i^{app}(s)-X_j^{app}(s))
        \\
        & \qquad \qquad + \Big(|X_i(s)-X_i^{app}(s)| + |X_j(s)-X_j^{app}(s)|\Big)|X_i^{app}(s)-X_j^{app}(s)| \\
        & \hspace{5cm} + \phi \Big(|X_i(s)-X_j(s)| + |X_i^{app}(s)-X_j^{app}(s)|\Big)\dd s .
    \end{aligned}
    \end{align}
    Combining \eqref{u-u_phi},  \eqref{nabla.u-u_N} and \eqref{S_3.est}, we have
    \begin{align}
        \Big|u_N(s,X_i(s)) - u_N(s,X_j(s)) - \Big(u_\phi(s,X_i(s)) - u_\phi(s,X_j(s))\Big) \Big| \lesssim r + \phi \log N |X_i - X_j|.
    \end{align}
    Inserting this in \eqref{relative.distance.0} and using \eqref{app.positions.0},  \eqref{dmin.control} and \eqref{dmin.control.app} yields
    \begin{align}
        &\Big|X_i(t) - X_i^{app}(t) - (X_j^{app}(t) - X_j(t))\Big| \\
        &\lesssim \int_0^t \Big|X_i(s) - X_i^{app}(s) - (X_j(s) - X_j^{app}(s))\Big| \dd s + r + \phi \log N |X_i^0 - X_j^0|.
    \end{align}
    Gronwall's inequality implies \eqref{relative.distance}.
    \\[3mm]
\emph{Step 4: Proof that
\begin{align} \label{app.orientations.1}
    |\xi_i(t) - \xi_i^{app}(t)| \lesssim \phi (\phi \log \phi + \mathcal W_\infty(f_N^0,f^0)|\log \mathcal W_\infty(f_N^0,f^0)|) +(\phi \log N)^2 + r 
    \qquad 
\end{align}
}

We first observe that \eqref{xi.dot} and $D u_N(X_i)  = 0$ imply
\begin{align} \label{xi.dot.full.gradient}
   \dot \xi_i(t) = \nabla u_N(X_i) \xi_i.
   \end{align}
    Using \eqref{xi.dot.full.gradient}, \eqref{xi^app} and \eqref{u_N-u.xi.1st.order}
    yields
    \begin{align}
        &|\xi_i(t) - \xi_i^{app}(t)|\\ &\leq \int_0^t  \bigg| M(\xi_i ) \bigg(\nabla u(X_i ) + \frac{\phi}{N}\sum_{j \neq i} \nabla^2 G(X_i - X_j )  S(\xi_j ) D u (X_j) ) \bigg)  \xi_i  \\
        & - M(\xi_i^{app} ) \bigg(\nabla u(X_i^{app} ) + \frac{\phi}{N}\sum_{j \neq i} \nabla^2 G(X_i^{app} - X_j^{app} )  S(\xi_j^{app} ) D u (X_j^{app} ) \bigg)  \xi_i^{app} \bigg| \dd s \\
        &+ (\phi S_3)^2 + r + r^4 N S_4.
    \end{align}
    Using that $M$ and $S$ are smooth functions on $\S^2$, the decay of $G$, \eqref{regularity.u}, \eqref{S_3.est} and \eqref{dmin.control.app} yields
    \begin{align}
        &\bigg| M(\xi_i ) \bigg(\nabla u(X_i ) + \frac{\phi}{N}\sum_{j \neq i} \nabla^2 G(X_i - X_j )  S(\xi_j ) D u (X_j) ) \bigg)  \xi_i  \\
        & - M(\xi_i^{app} ) \bigg(\nabla u(X_i^{app} ) + \frac{\phi}{N}\sum_{j \neq i} \nabla^2 G(X_i^{app} - X_j^{app} )  S(\xi_j^{app} ) D u (X_j^{app} ) \bigg)  \xi_i^{app} \bigg| \\
        & \lesssim (1 + \phi \log N) (\sup_j |X_j - X_j^{app}| + |\xi_j - \xi_j^{app}|)
        + \frac{\phi}{N}\sum_{j \neq i} \left|\nabla^2 G(X_i - X_j ) - \nabla^2 G(X_i^{app} - X_j^{app} ) \right|.
    \end{align}
    Moreover, by \eqref{relative.distance}, the decay of $G$, \eqref{dmin.control} and \eqref{dmin.control.app}, we have
    \begin{align}
        &\frac{\phi}{N}\sum_{j \neq i} \left|\nabla^2 G(X_i - X_j ) - \nabla^2 G(X_i^{app} - X_j^{app} ) \right| \\
        &\lesssim \frac{\phi}{N}\sum_{j \neq i} |X_i(t) - X_i^{app}(t) - (X_j(t) - X_j^{app}(t))| \bigg( \frac { 1 }{ |X_i - X_j|^4 } + \frac { 1 }{ |X_i^{app} - X_j^{app}|^4} \bigg) \\
        & \lesssim S_3 \phi^2 \log N + r \phi S_4 \lesssim (\phi \log N)^2 + r^4 N S^4,
    \end{align}
    where we used \eqref{S_3.est} in the last estimate.
    We also note that \eqref{S_4.est} and \eqref{non.overlapping.2}  imply
    \begin{align}
        r^4 N S_4 \lesssim r^4 \dmin^{-4}  \lesssim \phi^{4/3}.
    \end{align}
    Collecting the previous inequalities and combining with \eqref{app.positions.1} as well as \eqref{ass:diluteness} yields
    \begin{align}
        \sup_i |\xi_i(t) - \xi_i^{app}(t)| &\lesssim \int_0^t \sup_i |\xi_i(s) - \xi_i^{app}(s)|\dd s \\
        &+  \phi (\phi \log \phi + \mathcal W_\infty(f_N^0,f^0)|\log \mathcal W_\infty(f_N^0,f^0)|) +(\phi \log N)^2 + r + \phi^{4/3}.
    \end{align}
    Gronwall's inequality implies
    \begin{align}
        |\xi_i(t) - \xi_i^{app}(t)| \lesssim \phi (\phi \log \phi + \mathcal W_\infty(f_N^0,f^0)|\log \mathcal W_\infty(f_N^0,f^0)|) +(\phi \log N)^2 + r + \phi^{4/3}
    \end{align}\\[3mm]
   \emph{Step 5: Conclusion.}
By \eqref{discretisation.error} we have $ r \lesssim \mathcal W_\infty(f_N^0,f^0)$ and $|\log \mathcal W_\infty(f_N^0,f^0)| \lesssim \log N$.
    Combining this \eqref{app.orientations.1} with \eqref{app.positions.1} yields \eqref{app.positions.orientations}.
\end{proof}

\subsection{Regularization procedure} \label{subsec:proof_deterministic2}

In the light of Theorem \ref{th:app} and \eqref{xi^app}, to determine a $O(\phi)$ approximation of $f_N$ requires the analysis of the singular mean-field interaction \eqref{double.sum.original}.  More generally, for any family $(X_i, \xi_i)_{1 \le i \le  N}$ satisfying \eqref{ass:separation} and any continuous $\Psi_1, \Psi_2 \in C(\R^3 \times \S^2 ; \R_0^{3\times 3})$, we want to study the asymptotics of 
\begin{equation} \label{def:IN}
I^N := I^N[\Psi_1,\Psi_2] :=  \frac 1 {N^2} \sum_i \sum_{j \neq i} \Psi_1(X_i,\xi_i) :  \big( \nabla^2 G(X_i - X_j)  \Psi_2(X_j,\xi_j)\big)
\end{equation}
We will use an idea introduced in \cite{Serfaty15} in the context of Coulomb gases: we will reformulate this quantity in terms of appropriate continuous quadratic energy functionals. In our context, this idea 
comes with the remark that for any matrix $A \in \R_0^{3 \times 3}$, the vector-valued function 
$$ h[A] = A\na \cdot G = (A_{lm} \pa_m G_{kl} )_{1 \le k \le 3} $$
satisfies 
\begin{equation} \label{eq_hS}
-\Delta h[A] + \na p[A] = (A \na) \delta_0, \quad \div h[A] = 0 \quad \text{in } \: \R^3
\end{equation}
Introducing, for any $\Psi \in C(\R^3 \times \S^2 ; \R^{3\times 3})$, the function 
$$h_i^N[\Psi] = \frac{1}{N}h[\Psi(X_i,\xi_i)](x-X_i), \quad h^N[\Psi] = \frac 1 N \sum_i h_i^N[\Psi]   $$
we have formally 
\begin{equation} \label{formal_comp}
\begin{aligned}
I^N  & \approx \frac{1}{N^2}
\sum_{i \neq j}  \int_{\R^3}  \Psi_1(X_i,\xi_i) :  \big( \na^2 G(x-X_j)  \Psi_2(X_j,\xi_j) \big) \delta_{X_i}(\d x) \\
&  \approx  - \frac{1}{N}
\sum_{i \neq j}  \int_{\R^3}  h^N_j[\Psi_2] \cdot  (\Psi_1(X_i,\xi_i) \na)  \delta_{X_i}(\d x)   \\
& \approx  - \sum_{i \neq j}  \int_{\R^3} 
 h^N_j[\Psi_2] \cdot  
(-\Delta h^N_i[\Psi_1] + \na p_i^N[\Psi_1]) \dd x  \approx - \int_{\R^3} \sum_{i \neq j} \na h_i^N[\Psi_1] : \na h_j^N[\Psi_2] \\ 
& \approx - \int_{\R^3} \na h^N[\Psi_1] \cdot \na h^N[\Psi_2] + \sum_i \int_{\R^3} \na h_i^N[\Psi_1] \cdot \na h_i^N[\Psi_2] 
\end{aligned}
\end{equation}
 However, this series of formal calculations is not rigorous, as functions $h^N_i[\Psi]$ do not belong to $\dot{H}^1(\R^3)$. We must therefore go through a regularization process, replacing $h[A]$, (resp. $h^N_i[\Psi]$, $h^N[\Psi]$) with some $h^\eta[A]$ (resp. $h^{N,\eta}_i[\Psi]$, $h^{N,\eta}[\Psi]$) where $\eta$ is the regularization parameter. Typically, we wish to replace the Dirac mass at the r.h.s. of \eqref{eq_hS} by some function with typical amplitude $O(\eta^{-3})$ and typical lengthscale $\eta$. Many different choices  ensure convergence of the regularized quadratic functional $I^{N,\eta}$  to $I^N$ as $\eta \rightarrow 0$ for fixed $N$. But we will show more: for some well-chosen regularization, in the regime where $N$ goes to infinity, $\eta$ goes to zero and $\eta \lesssim N^{-\frac13}$, we have $I^{N,\eta} = I^N$.  This kind of result already appeared  in \cite{Gerard-VaretHillairet19} when looking for $O(\phi^2)$ correction to the effective viscosity, and we follow the choice of regularization made there. Namely, we introduce $(h^\eta[A], p^\eta[A])$ of $(h[A],p[A])$, defined by: for all $A \in \R_0^{3 \times 3}$, 
 \begin{align*}
    h^\eta[A] = h[A], \quad p^\eta[A] = p[A] &\qquad \text{in } \R^3 \setminus B(0,\eta), \\
     -\Delta h^\eta[A] + \na p^\eta[A] = 0, \quad \div h^\eta[A] = 0   &\qquad\text{in }  B(0,\eta) \\ \quad h^\eta[A]\vert_{\pa B(0,\eta)^+} = h[A]\vert_{\pa B(0,\eta)^-} &
 \end{align*}
 Note that, by homogeneity, 
\begin{align}
    h^\eta[A](x) = \frac{1}{\eta^2} h^1[A](x/\eta) \label{homogeneity.h}
\end{align}
 Moreover, 
 $$ -\Delta  h^\eta[A] + \na p^\eta[A] = S^\eta[A]  $$
where $S^\eta[A]$ is the measure supported on the sphere $\pa B(0,\eta)$ defined by 
$$S^\eta[A] = - \big[2D(h^\eta[A])n - p^\eta[A] n\big]\vert_{\pa B(0,\eta)} s^\eta $$
with $n$ the unit normal vector pointing outside $B(0,\eta)$ and $s^\eta$  the surface measure on the sphere and where $[F]\vert_{\pa B(0,\eta)} := F\vert_{\pa B(0,\eta)^+} - F\vert_{\pa B(0,\eta)^-}$ denotes the jump over the surface $\pa B(0,\eta)$. We now state  properties of this regularization, extending those given in  \cite[Lemma 3.5]{Gerard-VaretHillairet19} to the case where the matrix $A$ is not necessarily  symmetric. 
\begin{lemma} \label{lemma_regul}
For all $A \in \R_0^{3 \times 3}$, for all $\eta > 0$, $S^\eta[A] = \div \Theta^\eta[A]$ in $\R^3$, where 
\begin{equation} \label{def_Psi_eta}
\begin{aligned}
    \Theta^\eta[A](x) &= \frac{3}{\pi \eta^5} \left(Ax \otimes x + x \otimes Ax - \frac{5}{2}|x|^2 A + \frac{5}{4} \eta^2 A \right) \\ & - 2 D(h^\eta[A])(x) + p^\eta[A](x) \textrm{Id}, \quad x \in B(0,\eta) \\
   \Theta^\eta[A](x) &= 0, \quad x \in B(0,\eta)^c 
\end{aligned}
\end{equation}
Moreover, $\Theta^\eta[A] \rightarrow A\delta_0$ in the sense of distributions as $\eta \rightarrow 0$, so that $S^\eta[A] \rightarrow (A\na)\delta_0$
\end{lemma}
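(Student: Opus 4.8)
I would treat separately the algebraic identity $S^\eta[A] = \div \Theta^\eta[A]$ and the convergence $\Theta^\eta[A]\to A\delta_0$ (after which $S^\eta[A]\to (A\na)\delta_0$ is automatic, $\div$ being continuous on $\mathcal D'(\R^3)$). For the identity, the plan is to write $T^\eta[A](x) := \frac{3}{\pi\eta^5}\big(Ax\otimes x + x\otimes Ax - \frac52|x|^2 A + \frac54\eta^2 A\big)$, so that \eqref{def_Psi_eta} reads $\Theta^\eta[A] = \big(T^\eta[A] - \sigma(h^\eta[A],p^\eta[A])\big)\1_{B(0,\eta)}$ with $\sigma(v,q) := 2Dv - q\Id$. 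On $B(0,\eta)$ one has $\div\sigma(h^\eta[A],p^\eta[A]) = \Delta h^\eta[A] - \na p^\eta[A] = 0$, since $(h^\eta[A],p^\eta[A])$ solves the Stokes equations there and $\div h^\eta[A] = 0$; and a one-line computation using $\mathrm{tr}\,A = 0$ gives $\div T^\eta[A] = \frac{3}{\pi\eta^5}(4+1-5)Ax = 0$. Hence, testing $\div\Theta^\eta[A]$ against $\varphi\in C_c^\infty(\R^3;\R^3)$ and integrating by parts over $B(0,\eta)$ leaves only the surface term $-\big(T^\eta[A] - \sigma(h^\eta[A],p^\eta[A])\vert_{\pa B(0,\eta)^-}\big)n\,s^\eta$. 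Since $h^\eta[A] = h[A]$, $p^\eta[A] = p[A]$ outside $B(0,\eta)$, the definition of $S^\eta[A]$ gives $S^\eta[A] = -\big(\sigma(h[A],p[A]) - \sigma(h^\eta[A],p^\eta[A])\vert_{\pa B(0,\eta)^-}\big)n\,s^\eta$, so $S^\eta[A] = \div\Theta^\eta[A]$ is equivalent to the boundary identity
\[
T^\eta[A]\,n = \sigma(h[A],p[A])\,n \qquad \text{on }\pa B(0,\eta).
\]

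To prove this boundary identity I would split $A = A_{\mathrm{sym}} + A_{\mathrm{skew}}$. From the explicit Oseen tensor one gets $h[A_{\mathrm{sym}}](x) = -\frac{3}{8\pi}(A_{\mathrm{sym}}x\cdot x)\frac{x}{|x|^{3}}$, $p[A_{\mathrm{sym}}](x) = -\frac{3}{4\pi}\frac{A_{\mathrm{sym}}x\cdot x}{|x|^{3}}$ (a stresslet), and $h[A_{\mathrm{skew}}](x) = -\frac{1}{4\pi}\frac{A_{\mathrm{skew}}x}{|x|^{3}}$, $p[A_{\mathrm{skew}}] = 0$ (a rotlet). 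These are homogeneous, so their stress tensors are $(-3)$-homogeneous, and one computes $\sigma(h[A],p[A])\,n$ on $|x| = \eta$ and matches it term by term with $T^\eta[A]\,n$ (using $|x|^2 = \eta^2$ and $\mathrm{tr}\,A = 0$). The symmetric (stresslet) contribution is exactly \cite[Lemma 3.5]{Gerard-VaretHillairet19}; the antisymmetric (rotlet) contribution is handled in the same way and is shorter since the pressure vanishes, the only delicate point being the bookkeeping of signs and normalizations in the rotlet stress $\sigma(h[A_{\mathrm{skew}}])$.

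For the convergence, by \eqref{homogeneity.h}, the Stokes scaling $p^\eta[A](x) = \eta^{-3}p^1[A](x/\eta)$, and the obvious scaling of the polynomial part, one obtains $\Theta^\eta[A](x) = \eta^{-3}\Theta^1[A](x/\eta)$ with $\Theta^1[A]$ bounded and supported in $\overline{B(0,1)}$. Hence $\Theta^\eta[A]$ is an approximate identity and $\Theta^\eta[A]\rightharpoonup\big(\int_{\R^3}\Theta^1[A]\big)\delta_0$ in $\mathcal D'(\R^3)$, so it remains to check $\int_{B(0,1)}\Theta^1[A] = A$. The polynomial part is computed from $\int_{B_1}y_iy_j\,\d y = \frac{4\pi}{15}\delta_{ij}$, $\int_{B_1}|y|^2\,\d y = \frac{4\pi}{5}$, $|B_1| = \frac{4\pi}{3}$; the stress part $\int_{B_1}\big(-2Dh^1[A] + p^1[A]\Id\big)\,\d y$ is reduced by the divergence theorem to a boundary integral of $h[A]\otimes n$ (symmetrized), evaluated with the spherical moments $\int_{S^2}y_iy_j\,\d s = \frac{4\pi}{3}\delta_{ij}$ and $\int_{S^2}y_iy_jy_ky_l\,\d s = \frac{4\pi}{15}(\delta_{ij}\delta_{kl} + \delta_{ik}\delta_{jl} + \delta_{il}\delta_{jk})$, the pressure integral vanishing by tracelessness of $A$. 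Summing the two contributions yields $A$, whence $\Theta^\eta[A]\to A\delta_0$ and $S^\eta[A]\to (A\na)\delta_0$.

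\emph{Main obstacle.} Everything above is elementary except the explicit boundary identity $T^\eta[A]n = \sigma(h[A],p[A])n$ on $\pa B(0,\eta)$ and the companion computation of $\int_{B(0,1)}\Theta^1[A]$: both hinge on pinning down the stresslet and rotlet velocity/pressure fields and their stress tensors with the precise constants. The symmetric (stresslet) case is available from \cite[Lemma 3.5]{Gerard-VaretHillairet19}; the new ingredient is the antisymmetric (rotlet) part coming from $h[A_{\mathrm{skew}}] = A_{\mathrm{skew}}\na\cdot G$, which is where the bulk of the computation — and all of the sign bookkeeping — sits.
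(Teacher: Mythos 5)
Your proof takes essentially the same route as the paper's: both verify $\div\Theta^\eta=0$ off the sphere and identify the jump across $\partial B(0,\eta)$ with $S^\eta[A]$. What you do slightly differently is to make the structure of this computation explicit — separating $\Theta^\eta$ into the polynomial tensor $T^\eta$ (divergence-free by $\operatorname{tr}A=0$) and $-\sigma(h^\eta,p^\eta)$ (divergence-free by the Stokes equation), and then reducing the whole identity to the single boundary condition $T^\eta[A]\,n=\sigma(h[A],p[A])\,n$ on $\partial B(0,\eta)$; the paper instead computes $h[A]$, $p[A]$, $2D(h[A])$ and the exterior stress $\sigma(h[A],p[A])n$ directly for general traceless $A$ without the $A=A_{\mathrm{sym}}+A_{\mathrm{skew}}$ split and then checks that $[\Theta^\eta n]s^\eta=S^\eta[A]$. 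Your decomposition into stresslet and rotlet is economical (the stresslet case is imported from \cite{Gerard-VaretHillairet19}) and is a sensible way to organize the rotlet bookkeeping you correctly flag as the main work. For the convergence $\Theta^\eta\to A\delta_0$, the paper simply refers to the proof of \cite[Lemma 3.5]{Gerard-VaretHillairet19}, whereas your scaling identity $\Theta^\eta[A](x)=\eta^{-3}\Theta^1[A](x/\eta)$ plus the explicit moment computations $\int_{B_1}y_iy_j=\tfrac{4\pi}{15}\delta_{ij}$, $\int_{S^2}y_iy_jy_ky_l=\tfrac{4\pi}{15}(\delta_{ij}\delta_{kl}+\dots)$ is a self-contained alternative — a modest but genuine strengthening. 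One caution worth internalizing before you start the rotlet computation: because $h[A]=A\nabla\cdot G$ contains the term $-\tfrac{1}{8\pi}(A-A^T)x/|x|^3$, symmetrizing the gradient produces $A^T x\otimes x+x\otimes A^T x$ rather than $Ax\otimes x + x\otimes Ax$; these differ for non-symmetric $A$, so the boundary matching $T^\eta n=\sigma(h[A],p[A])n$ must be checked keeping this distinction straight.
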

\begin{proof}
From the explicit expression of the Stokes fundamental solution $(G,Q)$, we deduce that $h[A] = (A\na) \cdot G$ and $p[A] = (A\na) \cdot Q$ are given by 
\begin{align*}
h[A] & = -\frac{3}{8\pi} (Ax \cdot x) \frac{x}{|x|^5} - \frac{1}{8\pi} \frac{(A - A^T)x}{|x|^3} \\
p[A]& =-\frac{3}{4\pi} \frac{Ax \cdot x}{|x|^5}
\end{align*}
From this expression, we deduce that 
$$ 2 D(h[A]) = - \frac{3}{4\pi}\frac{Ax \otimes x + x \otimes Ax}{|x|^5} + \frac{15}{4\pi}\frac{(Ax \cdot x)x \otimes x}{|x|^7} - \frac{3}{4\pi} \frac{Ax \cdot x}{|x|^5} $$
so that 
\begin{align*}
\big( 2 D(h[A])n - p[A]n\big) \vert_{\pa B(0,\eta)^+} = \frac{3}{4\pi \eta^3} \big(4(An \cdot n)n - An \big) 
\end{align*}
and 
$$S^\eta[A] = \left(- \frac{3}{4\pi \eta^3} \big(4(An \cdot n)n - An \big) + \big(2D(h^\eta[A)n - p^\eta[A]n\big)\vert_{\pa B(0,\eta)^-} \right) s^\eta $$
On the other hand, a direct calculation using \eqref{def_Psi_eta}
 shows that $\div \Theta^\eta = 0$ outside $\pa B(0,\eta)$, and that 
 \begin{align*}
 [\Theta^\eta[A]n]\vert_{\pa B(0,\eta)} s^\eta = - \Theta^\eta[A]n\vert_{\pa B(0,\eta)^-} s^\eta = S^\eta[A]   
 \end{align*}
 This proves the first part of the lemma. The second part can be taken mutatis mutandis from the proof of \cite[Lemma 3.5]{Gerard-VaretHillairet19} . 
\end{proof}
We then introduce approximations of $h_i^N[\Psi]$ and $h^N[\Psi]$, namely
\begin{equation} \label{def:hNeta}
h_i^{N,\eta}[\Psi] = \frac{1}{N}h^\eta[\Psi(X_i,\xi_i)](x-X_i), \quad h^{N,\eta}[\Psi] = \frac 1 N \sum_i h_i^{N,\eta}[\Psi]  
\end{equation}
We show in the following proposition that these approximations allow to make rigorous the calculations in \eqref{formal_comp}.
\begin{proposition} \label{lim_eta_Nfixed}
$$ I^N = \lim_{\eta \rightarrow 0} - \int_{\R^3} \na h^{N,\eta}[\Psi_1] \cdot \na h^{N,\eta}[\Psi_2] + \sum_i \int_{\R^3} \na h_i^{N,\eta}[\Psi_1] \cdot \na h_i^{N,\eta}[\Psi_2] $$    
\end{proposition}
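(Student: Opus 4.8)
The plan is to turn the formal chain \eqref{formal_comp} into a rigorous statement; the only genuine obstruction there is that $h[A] = (A\na)\cdot G$ has gradient singularity $|\na h[A]| = O(|x|^{-3})$ at the origin, so $h[A] \notin \dot H^1(\R^3)$. The regularization removes precisely this: $h^\eta[A]$ coincides with $h[A]$ outside $B(0,\eta)$ (where $\na h[A]$ is square integrable, decaying like $|x|^{-3}$ at infinity) and is smooth inside $B(0,\eta)$, hence $h^\eta[A] \in \dot H^1_{\mathfrak s}(\R^3)$. Consequently, for each fixed $N$ and $\eta > 0$, all the fields $h^{N,\eta}_i[\Psi]$ and $h^{N,\eta}[\Psi]$ lie in $\dot H^1_{\mathfrak s}(\R^3)$ (continuity of $\Psi$ being used so that $\Psi(X_i,\xi_i)$ makes sense), and the two energy integrals in the statement are finite.

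First I would expand the functional. Using bilinearity and $h^{N,\eta}[\Psi] = \sum_i h^{N,\eta}_i[\Psi]$, for every $\eta > 0$
\begin{equation*}
- \int_{\R^3} \na h^{N,\eta}[\Psi_1] \cdot \na h^{N,\eta}[\Psi_2] + \sum_i \int_{\R^3} \na h^{N,\eta}_i[\Psi_1] \cdot \na h^{N,\eta}_i[\Psi_2] = - \sum_i \sum_{j \neq i} \int_{\R^3} \na h^{N,\eta}_i[\Psi_1] \cdot \na h^{N,\eta}_j[\Psi_2],
\end{equation*}
so the diagonal self-energies, which individually diverge as $\eta \to 0$, cancel identically and no limiting argument is needed for them.

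Next I would compute the off-diagonal terms. Fix $i \neq j$ and take $\eta$ small enough that $B(X_i,\eta) \cap B(X_j,\eta) = \emptyset$; then on a neighbourhood of $\overline{B(X_i,\eta)}$ the field $h^{N,\eta}_j[\Psi_2]$ coincides with the unregularized $\tfrac1N h[\Psi_2(X_j,\xi_j)](\cdot - X_j)$, which is smooth there. Since $h^{N,\eta}_i[\Psi_1] \in \dot H^1_{\mathfrak s}$ is the weak solution of the Stokes system with forcing $\tfrac1N S^\eta[\Psi_1(X_i,\xi_i)](\cdot - X_i)$, testing it against the solenoidal field $h^{N,\eta}_j[\Psi_2]$ makes the pressure drop out and gives
\begin{equation*}
\int_{\R^3} \na h^{N,\eta}_i[\Psi_1] \cdot \na h^{N,\eta}_j[\Psi_2] = \big\langle \tfrac1N S^\eta[\Psi_1(X_i,\xi_i)](\cdot - X_i),\; h^{N,\eta}_j[\Psi_2] \big\rangle .
\end{equation*}
Writing $S^\eta[A] = \div \Theta^\eta[A]$ via Lemma \ref{lemma_regul} and integrating by parts once more, the right-hand side equals $-\tfrac1{N^2} \int_{\R^3} \Theta^\eta[\Psi_1(X_i,\xi_i)](x - X_i) : \na h[\Psi_2(X_j,\xi_j)](x - X_j) \dd x$.

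Finally I would let $\eta \to 0$ with $N$ fixed. By Lemma \ref{lemma_regul}, $\Theta^\eta[\Psi_1(X_i,\xi_i)](\cdot - X_i) \to \Psi_1(X_i,\xi_i)\,\delta_{X_i}$ as measures, with supports shrinking to $\{X_i\}$ and uniformly bounded total mass, while $x \mapsto \na h[\Psi_2(X_j,\xi_j)](x - X_j)$ is smooth in a fixed neighbourhood of $X_i$ (as $X_i \neq X_j$); localizing with a cutoff, each term then converges to $-\tfrac1{N^2}\,\Psi_1(X_i,\xi_i) : \na h[\Psi_2(X_j,\xi_j)](X_i - X_j)$. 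A direct index computation from $h[A] = (A\na)\cdot G$ yields $\na h[A] = \na^2 G\, A$, so this limit is $-\tfrac1{N^2}\,\Psi_1(X_i,\xi_i):\big(\na^2 G(X_i - X_j)\,\Psi_2(X_j,\xi_j)\big)$; summing the finitely many terms over $i \neq j$ recovers $I^N$. The step that deserves care is the Stokes energy identity above: one has to verify that $h^{N,\eta}_i[\Psi_1]$ is genuinely a $\dot H^1$ weak solution of its Stokes problem with the surface-measure forcing, and that $S^\eta$, being the divergence of a compactly supported bounded tensor $\Theta^\eta$, pairs continuously with $\dot H^1_{\mathfrak s}(\R^3)$; everything else is bookkeeping with the explicit kernels and the convergence $\Theta^\eta[A] \to A\,\delta_0$.
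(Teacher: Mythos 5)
Your proof is correct and follows essentially the same route as the paper's: expand to isolate the off-diagonal cross terms, invoke the weak formulation of the Stokes problem for $h^{N,\eta}_i[\Psi_1]$ tested against the solenoidal field $h^{N,\eta}_j[\Psi_2]$, and pass to the limit $\eta\to 0$ using Lemma \ref{lemma_regul}. The only cosmetic difference is that you perform one extra integration by parts, pairing the bounded compactly supported tensor $\Theta^\eta$ against $\nabla h[\Psi_2(X_j,\xi_j)](\cdot - X_j)$ and using $\Theta^\eta[A]\to A\,\delta_0$, whereas the paper pairs the surface measure $S^\eta$ directly against $h^{N,\eta}_j[\Psi_2]$ and uses $S^\eta[A]\to (A\nabla)\delta_0$; both hinge on the same lemma and on the smoothness of $h_j^{N,\eta}[\Psi_2]=h^N_j[\Psi_2]$ near $B(X_i,\eta)$ once $\eta$ is small enough.
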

\begin{proof}
    We start from the right-hand side, that we express as 
\begin{align*}
&    - \int_{\R^3} \na h^{N,\eta}[\Psi_1] \cdot \na h^{N,\eta}[\Psi_2] + \sum_i \int_{\R^3} \na h_i^{N,\eta}[\Psi_1] \cdot \na h_i^{N,\eta}[\Psi_2]  \\
 &= -  \sum_{i \neq j} \int_{\R^3}\na h_i^{N,\eta}[\Psi_1] :  \na h_j^{N,\eta}[\Psi_2] = - \sum_{i \neq j}  \int_{\R^3}\big(-\Delta  h_i^{N,\eta}[\Psi_1] + \na p_i^{N,\eta}[\Psi_1] \big) :  h_j^{N,\eta}[\Psi_2]  \\
 &= - \frac{1}{N} \sum_{i \neq j}  \int_{\R^3}S^\eta[\Psi_1(X_i,\xi_i)](\cdot - X_i) :   h_j^{N,\eta}[\Psi_2] \\
&  \xrightarrow[\eta \rightarrow 0]{}  - \frac{1}{N} \sum_{i \neq j}
\int_{\R^3}\Big( (\Psi_1(X_i,\xi_i)\na) \delta_{X_i} \Big) :  h_j^{N}[\Psi_2] 
\end{align*}
where the last line comes from Lemma \ref{lemma_regul} and from the fact that for $\eta$ obeying \eqref{eta_N}, $h_j^{N,\eta}[\Psi_2] = h^{N}_j[\Psi_2] = \frac{1}{N}(\Psi_2(X_j,\xi_j)\na) \cdot G(\cdot - X_j)$ is smooth in the vicinity of $B(X_i,\eta)$ for all $i \neq j$. We end up with 
    \begin{align*}
&    - \int_{\R^3} \na h^{N,\eta}[\Psi_1] \cdot \na h^{N,\eta}[\Psi_2] + \sum_i \int_{\R^3} \na h_i^{N,\eta}[\Psi_1] \cdot \na h_i^{N,\eta}[\Psi_2] \\
&  \xrightarrow[\eta \rightarrow 0]{}  \frac{1}{N^2}  \sum_{i \neq j}
 (\Psi_1(X_i,\xi_i)\na) \cdot (\Psi_2(X_j,\xi_j)\na) \cdot G(X_i-X_j) = I^N.
\end{align*}
\end{proof}

We are now ready to state the main theorem of this subsection: 
\begin{theorem} \label{theo2}
	For a given sequence of particle configurations $(X_i, \xi_i)_{1 \le i \le N}$ satisfying \eqref{ass:separation}, let $\eta$ satisfy
	\begin{align} \label{eta_N}
	\eta \le \frac{c}{4} N^{-\frac 1 3}. 
	\end{align}
	Then, for any $\Psi_1, \Psi_2 \in C(\R^3 \times \S^2; \R_0^{3\times3})$, we have 
	\begin{align}
	& \frac 1 {N^2} \sum_i \sum_{j \neq i} \Psi_1(X_i,\xi_i) :  \big( \nabla^2 G(X_i - X_j)  \Psi_2(X_j,\xi_j)\big)  \\
	 & \hspace{2cm} = -  \int \nabla h^{N,\eta}[\Psi_1] :  \nabla h^{N,\eta}[\Psi_2] \dd x +   \sum_i \int \nabla h^{N,\eta}_i[\Psi_1] :  \nabla h^{N,\eta}_i[\Psi_2] \dd x .
	\end{align}
\end{theorem}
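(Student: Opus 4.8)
The plan is to exploit Proposition~\ref{lim_eta_Nfixed}, which already identifies $I^N$ (see \eqref{def:IN}) with the $\eta\to 0$ limit of the right-hand side; it therefore suffices to prove that this right-hand side is in fact \emph{constant} in $\eta$ on the whole admissible range $0<\eta\le\frac{c}{4}N^{-1/3}$ of \eqref{eta_N}. First I would expand the quadratic form exactly as at the start of the proof of Proposition~\ref{lim_eta_Nfixed}: the diagonal terms cancel and the right-hand side equals $-\sum_{i\neq j}\int_{\R^3}\nabla h_i^{N,\eta}[\Psi_1]:\nabla h_j^{N,\eta}[\Psi_2]\dd x$, so it is enough to show that for each fixed pair $i\ne j$ the single integral $\int_{\R^3}\nabla h_i^{N,\eta}[\Psi_1]:\nabla h_j^{N,\eta}[\Psi_2]\dd x$ does not depend on $\eta$ in that range.

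To do so I fix $i\ne j$. Both fields lie in $\dot H^1(\R^3)$ (by the construction and the decay of $G$, cf.\ \eqref{homogeneity.h}), and $h_i^{N,\eta}[\Psi_1]$ solves the Stokes system with forcing $\frac1N S^\eta[\Psi_1(X_i,\xi_i)](\cdot-X_i)$, a finite measure carried by $\partial B(X_i,\eta)$; since $\dv h_j^{N,\eta}[\Psi_2]=0$, integrating by parts gives $\int_{\R^3}\nabla h_i^{N,\eta}[\Psi_1]:\nabla h_j^{N,\eta}[\Psi_2]\dd x=\frac1N\langle S^\eta[\Psi_1(X_i,\xi_i)](\cdot-X_i),\,h_j^{N,\eta}[\Psi_2]\rangle$. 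Then I would use two facts. First, a geometric one: by \eqref{ass:separation} and \eqref{eta_N} one has $|X_i-X_j|\ge cN^{-1/3}\ge 4\eta$, so $B(X_i,2\eta)$ is disjoint from $B(X_j,\eta)$; since $h^\eta[A]=h[A]$ outside $B(0,\eta)$, the field $h_j^{N,\eta}[\Psi_2]$ coincides on a neighbourhood of $\overline{B(X_i,\eta)}$ with the unregularized $h_j^N[\Psi_2]=\frac1N h[\Psi_2(X_j,\xi_j)](\cdot-X_j)$, which solves there the \emph{homogeneous} Stokes system (its only singularity sits at $X_j$). Second, the analytic heart of the matter, a sharpening of Lemma~\ref{lemma_regul}: for every $A\in\R_0^{3\times3}$ and every $\varphi$ solving the homogeneous Stokes system in a neighbourhood of $\overline{B(0,\eta)}$,
\begin{equation*}
\langle S^\eta[A],\varphi\rangle=\langle(A\nabla)\delta_0,\varphi\rangle=-A:\nabla\varphi(0),
\end{equation*}
with the value independent of $\eta$; this is because, by \eqref{eq_hS}, $S^\eta[A]-(A\nabla)\delta_0=(-\Delta+\nabla p)(h^\eta[A]-h[A])$ is supported in $\overline{B(0,\eta)}$, where $h^\eta[A]-h[A]$ vanishes on $\partial B(0,\eta)$ and is divergence-free, so a double integration by parts against the Stokes-harmonic $\varphi$ annihilates it (equivalently, rewrite $\langle S^\eta[A],\varphi\rangle$ as a surface integral on $\partial B(0,\eta)$ and push it out to $\partial B(0,R)$ by Betti's reciprocity on the annulus $B(0,R)\setminus B(0,\eta)$). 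Combining these with the integration-by-parts formula and recalling $h_j^N[\Psi_2]=\frac1N(\Psi_2(X_j,\xi_j)\nabla)\cdot G(\cdot-X_j)$, so that $\nabla h_j^N[\Psi_2](X_i)=\frac1N\nabla^2G(X_i-X_j)\Psi_2(X_j,\xi_j)$, I obtain $\int_{\R^3}\nabla h_i^{N,\eta}[\Psi_1]:\nabla h_j^{N,\eta}[\Psi_2]\dd x=-\frac{1}{N^2}\Psi_1(X_i,\xi_i):\big(\nabla^2G(X_i-X_j)\Psi_2(X_j,\xi_j)\big)$, manifestly independent of $\eta$; summing over $i\ne j$ reproduces exactly $I^N$ and closes the argument (in passing re-deriving Proposition~\ref{lim_eta_Nfixed} in this regime).

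The main obstacle is the exact dipole identity for $S^\eta$ on Stokes-harmonic fields: unlike Lemma~\ref{lemma_regul}, which only gives convergence $S^\eta[A]\to(A\nabla)\delta_0$, here I need an \emph{exact} equality of pairings at fixed $\eta$, and establishing it forces careful use of the divergence-free condition in the double integration by parts (or a clean application of Betti's formula on an annulus). One also has to be a little careful that $h_j^{N,\eta}[\Psi_2]$ is both unmodified and smooth on a genuine open neighbourhood of $\overline{B(X_i,\eta)}$ — this is precisely where the quantitative gap $\eta\le\frac{c}{4}N^{-1/3}$ in \eqref{eta_N} enters — and that the integrations by parts (pairing a sphere-supported measure in $\dot H^{-1}$ with a $\dot H^1$ field that is smooth near that sphere) are fully justified.
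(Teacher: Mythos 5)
Your proposal is correct and reaches the same conclusion, but organizes the argument differently from the paper. The paper shows the right-hand side is constant in $\eta$ by comparing two regularization scales $\alpha<\eta\le\frac{c}{4}N^{-1/3}$: integrating by parts, the difference $\int(\nabla h_i^{N,\eta}-\nabla h_i^{N,\alpha}):\nabla h_j^{N,\eta}$ reduces to a pairing of $h_i^{N,\eta}-h_i^{N,\alpha}$ against the surface measure $S^\eta[\Psi_2(X_j,\xi_j)](\cdot-X_j)$ on $\partial B(X_j,\eta)$, and this vanishes because both regularizations agree with the unregularized $h_i^N[\Psi_1]$ there; constancy together with Proposition~\ref{lim_eta_Nfixed} then closes the proof. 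You instead compute the value of each cross term at fixed $\eta$, reducing it to the exact dipole pairing identity $\langle S^\eta[A],\varphi\rangle=\langle(A\nabla)\delta_0,\varphi\rangle$ for $\varphi$ solving the homogeneous Stokes system near $\overline{B(0,\eta)}$, from which the claimed formula falls out directly and Proposition~\ref{lim_eta_Nfixed} is reproved in passing. Both arguments hinge on the same geometric consequence of \eqref{ass:separation} and \eqref{eta_N}, namely that $B(X_i,\eta)$ is disjoint from $\overline{B(X_j,\eta)}$, so that the ``other'' field is the smooth unregularized solution on the relevant sphere. The paper's comparison trick is slightly more economical — it needs only a boundary observation, delegating the actual evaluation to Proposition~\ref{lim_eta_Nfixed} — whereas your version is more self-contained but requires the sharper dipole identity, whose proof (by double integration by parts on $w=h^\eta[A]-h[A]$, or via Betti reciprocity on an annulus) does demand some distributional care, since $w$ and $p^\eta[A]-p[A]$ are singular at the origin; you correctly anticipate this as the technical crux. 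The argument goes through: pairing the compactly supported distributional source $(-\Delta+\nabla)w$ with the smooth Stokes-harmonic $\varphi$, the Laplacian lands on $\varphi$ producing $\nabla\pi$, which kills against $\div w=0$, while the pressure gradient kills against $\div\varphi=0$.
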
 
Theorem \ref{theo2} is a direct consequence of the previous proposition and the following one.
\begin{proposition}
    The quantity 
\begin{equation} \label{def:INeta}
I^{N,\eta} =-  \int_{\R^3} \na h^{N,\eta}[\Psi_1] : \na h^{N,\eta}[\Psi_2] + \sum_i \int_{\R^3} \na h_i^{N,\eta}[\Psi_1] : \na h_i^{N,\eta}[\Psi_2] 
\end{equation}
is constant for $\eta \le \frac{c}{4}N^{-1/3}$, where $c$ is the constant in \eqref{ass:separation}. 
\end{proposition}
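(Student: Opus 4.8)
The plan is to reduce the statement to one elementary observation: for $\eta \le \frac{c}{4} N^{-1/3}$, changing $\eta$ only modifies each building block $h^\eta[A]$ inside the ball $B(0,\eta)$, and under \eqref{ass:separation} the balls $B(X_i,\eta)$ centered at distinct particles are pairwise disjoint, so a change of $\eta$ is invisible to the other particles.

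First I would expand $I^{N,\eta}$ from \eqref{def:INeta} using \eqref{def:hNeta}. Writing $A_i := \Psi_1(X_i,\xi_i)$ and $B_j := \Psi_2(X_j,\xi_j)$, we have $h^{N,\eta}[\Psi_1] = \frac1N\sum_i h^\eta[A_i](\cdot-X_i)$ and $h_i^{N,\eta}[\Psi_1] = \frac1N h^\eta[A_i](\cdot-X_i)$, and likewise for $\Psi_2$. Multiplying out the two terms of $I^{N,\eta}$, the diagonal ($i=j$) part of $-\int \na h^{N,\eta}[\Psi_1]:\na h^{N,\eta}[\Psi_2]$ cancels exactly against $\sum_i \int \na h_i^{N,\eta}[\Psi_1]:\na h_i^{N,\eta}[\Psi_2]$, leaving
\[ I^{N,\eta} = -\frac1{N^2}\sum_{i \neq j} \int_{\R^3} \na h^\eta[A_i](\cdot-X_i) : \na h^\eta[B_j](\cdot-X_j)\dd x . \]
So it suffices to show that each off-diagonal integral is independent of $\eta$ for $\eta \le \frac{c}{4} N^{-1/3}$.

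I would in fact introduce the two-parameter quantity $J_{ij}(\eta,\eta') := \int_{\R^3} \na h^\eta[A_i](\cdot-X_i):\na h^{\eta'}[B_j](\cdot-X_j)\dd x$ for $i \neq j$, and show it depends on neither $\eta$ nor $\eta'$ when $\eta,\eta' \le \frac{c}{4} N^{-1/3}$; the claim then follows on taking $\eta'=\eta$. The two inputs are: (a) both factors are solenoidal, belong to $\dot H^1(\R^3)$ (the interior regularization gives $\na h^\eta[A] \in L^2$ near the center, while $|\na h^\eta[A](x)| \lesssim |x|^{-3}$ controls infinity), and solve the Stokes system forced by the surface measure $S^\eta[A_i](\cdot-X_i)$, resp.\ $S^{\eta'}[B_j](\cdot-X_j)$, supported on $\partial B(X_i,\eta)$, resp.\ $\partial B(X_j,\eta')$; (b) by \eqref{ass:separation}, $|X_i-X_j| \ge \dmin \ge cN^{-1/3} > \eta+\eta'$, so $\overline{B(X_i,\eta)} \cap \overline{B(X_j,\eta')} = \emptyset$. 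Testing the Stokes equation for the first factor against the second — which annihilates the pressure term (the second factor is divergence free) and produces no boundary term at infinity (decay of $\na h^\eta[A]$ and $p^\eta[A]$) — gives $J_{ij}(\eta,\eta') = \langle S^\eta[A_i](\cdot-X_i),\, h^{\eta'}[B_j](\cdot-X_j)\rangle$. Since $h^{\eta'}[B_j](\cdot-X_j)$ agrees with the unregularized $h[B_j](\cdot-X_j)$ on a neighborhood of $\partial B(X_i,\eta)$ — exactly because this sphere is disjoint from $\overline{B(X_j,\eta')}$, the only place the second factor was altered — we obtain $J_{ij}(\eta,\eta') = \langle S^\eta[A_i](\cdot-X_i),\, h[B_j](\cdot-X_j)\rangle$, which is manifestly $\eta'$-free. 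Exchanging the roles of the two factors in the integration by parts shows the same expression is $\eta$-free.

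I do not expect a genuine obstacle; the only delicate points are of the bookkeeping/justification kind: the diagonal cancellation in the first step; the admissibility of the integration by parts, which rests on $h^\eta[A] \in \dot H^1(\R^3)$ (finite Dirichlet energy from the interior regularization plus the $|x|^{-3}$ gradient decay), on the vanishing of the boundary terms at infinity (from $|\na h^\eta[A](x)| + |x|\,|p^\eta[A](x)| \lesssim |x|^{-3}$), and on the fact that the only singularity of each tested field lies at its own center, far from the sphere carrying the other's forcing. (Identifying the common value of $I^{N,\eta}$ with $I^N$ — needed for Theorem \ref{theo2} but not for this proposition — additionally uses Lemma \ref{lemma_regul} and Proposition \ref{lim_eta_Nfixed}.) All the substance is the disjointness of the regularization balls.
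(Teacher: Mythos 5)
Your proof is correct and uses the same key mechanism as the paper: integration by parts to convert the cross Dirichlet integral into a pairing of a smeared dipole forcing with the other field, combined with the observation that the regularization spheres are pairwise disjoint under \eqref{ass:separation}, so that the other field is unaffected by the change of regularization on the relevant sphere. The paper phrases this as a telescoping comparison between $I^{N,\eta}$ and $I^{N,\alpha}$ rather than via your two-parameter $J_{ij}(\eta,\eta')$, but these are cosmetic differences.
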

\begin{proof}
   Let $\alpha < \eta \le \frac{c}{4}N^{-1/3}$. We wish to prove that $I^{N,\eta} = I^{N,\alpha}$, which is equivalent to
   \begin{align*}
       \sum_{i \neq j} \int_{\R^3} \na h^{N,\eta}_i[\Psi_1] : \na h^{N,\eta}_j[\Psi_2] =  \sum_{i \neq j} \int_{\R^3} \na h^{N,\alpha}_i[\Psi_1] \cdot \na h^{N,\alpha}_j[\Psi_2]
   \end{align*}
   This identity can be reformulated as 
\begin{align*} & \sum_{i \neq j}    \int_{\R^3} \big(\na h^{N,\eta}_i[\Psi_1] - \na h^{N,\alpha}_i[\Psi_1] \big) : \na h^{N,\eta}_j[\Psi_2] \\
+ & \sum_{i \neq j}   \int_{\R^3}  \na h^{N,\alpha}_i[\Psi_1]  : \big( \na h^{N,\eta}_j[\Psi_2] - \na h^{N,\alpha}_j[\Psi_2] \big)  = 0 .
   \end{align*}
Now, by integration by parts 
 \begin{align*}
& \int_{\R^3} \big(\na h^{N,\eta}_i[\Psi_1] - \na h^{N,\alpha}_i[\Psi_1] \big) : \na h^{N,\eta}_j[\Psi_2] \\
= &  - \int_{\R^3} 
\big(h^{N,\eta}_i[\Psi_1] - h^{N,\alpha}_i[\Psi_1] \big) : 
dS^\eta[\Psi_2(X_j,\xi_j)](\cdot - X_j) = 0
\end{align*}
as $h^{N,\eta}_i[\Psi_1] = h^{N,\alpha}_i[\Psi_1] = h^N_i[\Psi_1]$ on the support $\pa B(X_j, \eta)$ of $S^\eta[\Psi_2(X_j,\xi_j)](\cdot - X_j)$. Similarly, 
\begin{align*}
    \int_{\R^3}  \na h^{N,\alpha}_i[\Psi_1] ) : \big( \na h^{N,\eta}_j[\Psi_2] - \na h^{N,\alpha}_j[\Psi_2] \big) = 0. 
\end{align*}
This concludes the proof of the proposition. 
\end{proof}

\begin{corollary} \label{coro:bound_IN}
$I^N = I^N[\Psi_1,\Psi_2]$ is bounded uniformly in $N$. More precisely, it is a bilinear form obeying the estimate 
\begin{align*} 
|I^N[\Psi_1,\Psi_2]| & \le C \left( \frac{\sharp \{i, \Psi_1(X_i,\xi_i) \neq 0\}}{N} \frac{\sharp \{i, \Psi_2(X_i,\xi_i) \neq 0\}}{N}\right)^{1/2} 
\|\Psi_1\|_{\infty} \|\Psi_2\|_{\infty} 
\end{align*}
where $C$ depends only on the constant $c$ in \eqref{ass:separation} 
\end{corollary}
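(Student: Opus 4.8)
The plan is to start from the identity of Theorem~\ref{theo2}, which holds for every $\eta \le \tfrac{c}{4}N^{-1/3}$, and fix the largest admissible value $\eta = \tfrac{c}{4}N^{-1/3}$; then $\dmin \ge cN^{-1/3} = 4\eta$, so the balls $B(X_i,\eta)$ are pairwise disjoint. Bilinearity of $I^N[\Psi_1,\Psi_2]$ is immediate from that identity, since $\Psi\mapsto h^{N,\eta}[\Psi]$ and $\Psi \mapsto h^{N,\eta}_i[\Psi]$ are linear and the right-hand side is bilinear in the pairs $(h^{N,\eta}[\Psi_1],h^{N,\eta}[\Psi_2])$ and $(h^{N,\eta}_i[\Psi_1],h^{N,\eta}_i[\Psi_2])$. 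Two elementary scaling estimates are recorded first: for all $A \in \R^{3\times3}_0$,
$$ \|\nabla h^\eta[A]\|_{L^2(\R^3)} \le C\,\eta^{-3/2}|A|, \qquad \|\Theta^\eta[A]\|_{L^2(\R^3)} \le C\,\eta^{-3/2}|A|. $$
The first follows from the homogeneity relation~\eqref{homogeneity.h} together with the fact that $h^1[A] \in \dot H^1(\R^3)$ (it is a bounded Stokes solution in $B(0,1)$ and decays like $|x|^{-2}$ at infinity) and linearity in $A$; the second follows from the explicit formula~\eqref{def_Psi_eta}, since $\Theta^\eta[A]$ is supported in $B(0,\eta)$ and bounded there by $C\eta^{-3}|A|$.

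The diagonal term is the easy one: by Cauchy--Schwarz in $x$ and then over $i$,
$$ \Big|\sum_i \int \nabla h^{N,\eta}_i[\Psi_1]:\nabla h^{N,\eta}_i[\Psi_2]\Big| \le \Big(\sum_i \|\nabla h^{N,\eta}_i[\Psi_1]\|_{L^2}^2\Big)^{1/2}\Big(\sum_i \|\nabla h^{N,\eta}_i[\Psi_2]\|_{L^2}^2\Big)^{1/2}, $$
and since $\|\nabla h^{N,\eta}_i[\Psi]\|_{L^2} = N^{-1}\|\nabla h^\eta[\Psi(X_i,\xi_i)]\|_{L^2} \le C N^{-1}\eta^{-3/2}\|\Psi\|_\infty$ when $\Psi(X_i,\xi_i)\ne 0$ and vanishes otherwise, using $\eta^{-3} = (c/4)^{-3}N$ we obtain $\sum_i \|\nabla h^{N,\eta}_i[\Psi]\|_{L^2}^2 \le C\,\sharp\{i:\Psi(X_i,\xi_i)\ne 0\}\,N^{-1}\,\|\Psi\|_\infty^2$, which is exactly the claimed form.

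The crux is the non-diagonal term, which I bound by $\|\nabla h^{N,\eta}[\Psi_1]\|_{L^2}\|\nabla h^{N,\eta}[\Psi_2]\|_{L^2}$; the point is that one must not use the triangle inequality over the $N$ summands to estimate $\|\nabla h^{N,\eta}[\Psi]\|_{L^2}$, as that would lose a factor $N^{1/2}$ because of the slow $|x|^{-2}$ decay of $h^\eta[A]$. Instead I use the energy identity: by Lemma~\ref{lemma_regul}, $h^{N,\eta}[\Psi]\in\dot H^1_{\mathfrak s}(\R^3)$ solves $-\Delta h^{N,\eta}[\Psi] + \nabla p^{N,\eta}[\Psi] = \div\big(\tfrac1N\sum_i \Theta^\eta[\Psi(X_i,\xi_i)](\cdot-X_i)\big)$, so testing against $h^{N,\eta}[\Psi]$ itself (the pressure drops, being paired with a divergence-free field) gives
$$ \|\nabla h^{N,\eta}[\Psi]\|_{L^2}^2 = -\frac1N\sum_i \int_{B(X_i,\eta)} \Theta^\eta[\Psi(X_i,\xi_i)](\cdot-X_i):\nabla h^{N,\eta}[\Psi] \le \frac1N\sum_i \|\Theta^\eta[\Psi(X_i,\xi_i)]\|_{L^2}\,\|\nabla h^{N,\eta}[\Psi]\|_{L^2(B(X_i,\eta))}. $$
Now Cauchy--Schwarz over $i$, the disjointness of the balls $B(X_i,\eta)$ (so that $\sum_i \|\nabla h^{N,\eta}[\Psi]\|_{L^2(B(X_i,\eta))}^2 \le \|\nabla h^{N,\eta}[\Psi]\|_{L^2(\R^3)}^2$) and the scaling bound for $\Theta^\eta$ yield $\|\nabla h^{N,\eta}[\Psi]\|_{L^2}^2 \le \tfrac{C}{N}\big(\eta^{-3}\sharp\{i:\Psi(X_i,\xi_i)\ne 0\}\big)^{1/2}\|\Psi\|_\infty\,\|\nabla h^{N,\eta}[\Psi]\|_{L^2}$; dividing through and using $\eta^{-3}=(c/4)^{-3}N$ gives $\|\nabla h^{N,\eta}[\Psi]\|_{L^2} \le C\big(\sharp\{i:\Psi(X_i,\xi_i)\ne 0\}/N\big)^{1/2}\|\Psi\|_\infty$. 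Multiplying the estimates for $\Psi_1$ and $\Psi_2$, adding the diagonal contribution, and noting that $C$ depends only on $c$ completes the proof. The main obstacle is precisely this duality/energy step: naive summation fails, and it is the localization of the forcing in the disjoint balls $B(X_i,\eta)$, via the $\div\Theta^\eta$ representation of Lemma~\ref{lemma_regul}, that lets the estimate close.
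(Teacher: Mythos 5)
Your proof is correct and follows essentially the same route as the paper's: both start from the identity of Theorem~\ref{theo2} with the maximal admissible $\eta=\tfrac{c}{4}N^{-1/3}$, both handle the diagonal term by a direct scaling computation, and both control $\|\nabla h^{N,\eta}[\Psi]\|_{L^2}$ by the Stokes energy estimate combined with the disjointness of the supports of the $\Theta^\eta[\Psi(X_i,\xi_i)](\cdot-X_i)$. The only difference is cosmetic — you derive the energy inequality $\|\nabla h^{N,\eta}[\Psi]\|_{L^2}\le \tfrac1N\|\sum_i\Theta^\eta[\Psi(X_i,\xi_i)](\cdot-X_i)\|_{L^2}$ explicitly by testing against $h^{N,\eta}[\Psi]$, whereas the paper invokes it directly and then uses orthogonality (Pythagoras) of the disjointly supported summands rather than your Cauchy--Schwarz-plus-ball-disjointness step, but the two are equivalent.
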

\begin{proof}
By Theorem \ref{theo2}, it is enough to show that for $\eta_N = \frac{c}{4}N^{-1/3}$, both quantities 
$$ \int_{\R^3} \na h^{N,\eta_N}[\Psi_1] : \na h^{N,\eta_N}[\Psi_2]   \ \:\text{and} \: \sum_{i=1}^N \int \na h_i^{N,\eta_N}[\Psi_1] : \na h_i^{N,\eta_N}[\Psi_2] $$
 obey the estimate of the corollary
This follows from Cauchy-Schwarz inequality and from the simple energy estimates (based on the homogeneity \eqref{homogeneity.h}  and  the observation that the functions $\Theta^{\eta_N}[\Psi(X_i,\xi_i)](\cdot - X_i)$ have disjoint support due to \eqref{ass:separation}): 
\begin{align*}
    \|\na h^{N,\eta_N}[\Psi]\|_{L^2} & \le \frac{1}{N}\|\sum_i  \Theta^{\eta_N}[\Psi(X_i,\xi_i)](\cdot - X_i)\|_{L^2} \\
    & \le \frac{\big(\sharp \{i, \Psi(X_i,\xi_i) \neq 0\}\big)^{1/2}}{N \eta_N^{3/2}} \sup_i \|\Theta^1[\Psi((X_i,\xi_i))]\|_{L^2} \\
    & \le C \left( \frac{\sharp \{i, \Psi(X_i,\xi_i) \neq 0\}}{N} \right)^{1/2}\|\Psi\|_\infty 
\end{align*}
as well as 
\begin{align*}
\|\na h^{N,\eta_N}_i[\Psi(X_i,\xi_i)]\|_{L^2}  & = 1_{\Psi(X_i,\xi_i) \neq 0}  \frac{1}{N \eta_N^{3/2}}  \|\na h^1[\Psi(X_i,\xi_i)]\|_{L^2} \\
& \le 1_{\Psi(X_i,\xi_i) \neq 0} \frac{C'}{N^{1/2}} \|\Psi\|_\infty 
\end{align*}
\end{proof}
It remains to understand the possible accumulation points of the quantity $I^N$  in \eqref{def:IN}, in the special case where $(X_i,\xi_i) = (X_i^{app}(t),\xi_i^{app}(t))$ for some arbitrary time $t$. We wish to take advantage of Theorem \ref{theo2}, and consider the convergence of $I^{N,\eta}$ in the regime $\eta \approx N^{-1/3}$, $N \rightarrow +\infty$. A keypoint is to understand the limit of $\int_{\R^3} |\na h^{N,\eta}[\Psi]|^2$ for any continuous $\Psi = \Psi(x,\xi)$. Note that  $h^{N,\eta}[\Psi]$ solves the Stokes equation
$$-\Delta  h^{N,\eta}[\Psi] + \na p^{N,\eta}[\Psi] \big) =  \frac{1}{N} \sum_{i}  S^\eta[\Psi_1(X^{app}_i,\xi^{app}_i)](\cdot - X^{app}_i), $$
with a source term that varies at typical lenghtscale $N^{-1/3} \ll 1$. This oscillation will be transferred to the solution $h^{N,\eta}$, so that understanding the limit of the energy, quadratic in $h^{N,\eta}$, is a kind of  homogenization problem. To go further in the analysis of this problem, we will therefore consider a typical framework in homogenization theory, namely random stationarity (here of the particles distribution). One difficulty of our setting, to be handled in the next section, is that this random stationarity, even if assumed initially, will not propagate at positive times.

\section{Proof of the results for stationary ergodic initial data} \label{section_stationary}

\subsection{Random stationary model for the particles distribution}

We shall consider in this section the case of initial particle positions distributed according to a stationary ergodic point process, as briefly described before the statement of Theorem~\ref{theo3}. We first recall a few general notions around point processes. Given a complete separable metric space $X$,  we denote $\text{Point}_X$ the set  of all boundedly finite subsets $\Lambda$ of $X$, which means that $\sharp \Lambda \cap B < \infty$ for all Borel bounded subset $B$ of $X$.  Identifying each $\Lambda \in \text{Point}_X$ with the measure $\mu_{\Lambda} = \sum_{x \in \Lambda} \delta_x$, we can identify $\text{Point}_X$ with a subset of boundedly finite Borel measures on $X$, which can be equipped with the metrizable topology of weak-* convergence, cf. \cite[Corollary A2]{Daley.Jones.book}. Hence, $\text{Point}_X$ inherits a structure of  metric space, and of  measurable space with the corresponding Borel $\sigma$-algebra. A (simple) point process on $X$ is then a random variable from a probability space $(\Omega, \mA,P)$ to  $\text{Point}_X$. Given two complete separable metric spaces $X$ and $M$,  a marked point process on $X$ with marks in $M$ is a point process on $X \times M$ such that its projection on $X$ is a point process.  See \cite{Daley.Jones.book2} for more. 

\medskip
To model our initial collection of particles with various orientations, we start from a marked  point process $\Lambda = \Lambda(\omega) = \{(X'_k(\omega),\xi'_k(\omega)), k \in \N\}$ on $\R^3 \times \S^2$, where the $X'_k$'s are the centers of our particles and the $\xi'_k$'s their orientations. We further impose a separation condition between the centers: for all $k \neq l$, $|X'_k - X'_l| \ge c'$ for some fixed deterministic $c' > 0$.  Classically, through push-forward of $P$ by $\Lambda$,  we can always assume that the  probability space is 
$$\Omega = \Big\{ \omega = \{(X'_k, \xi'_k), k\in \N\}, |X'_k - X'_l| \ge c' \: \forall k \neq l \Big\} $$
(and $\Lambda(\omega) = \omega$). From there, we can define the family of shifts 
$(\tau_y)_{y \in \R^3}$ by 
$$ \tau_y : \Omega \rightarrow \Omega, \quad  \tau_y \Big(\{(X'_k, \xi'_k), k \in \N\}\Big) = \{(X'_k-y, \xi'_k), k \in \N\}  $$
Finally, we assume  
\begin{itemize}
\item Stationarity: the law $P$ of the marked point process satisfies $ P \circ \tau_y = P$ for all $y \in \R^3$. 
\item Ergodicity: for all $A \in \mA$, if $\tau_y(A) = A$ for all $y \in \R^3$, then $P(A) = 0$ or $P(A) =1$
\end{itemize}
Under these assumptions, one has for some $\lambda > 0$ and $\kappa$ a  measure on the sphere: 
\begin{equation}\label{intensity}\E \sum_{k} \delta_{(X'_k,\xi'_k)} = \lambda \dd \kappa(\xi) 
\end{equation}
The  ergodic theorem (see \cite[Theorem 7.2]{JKO2012}) yields that for all $g \in L^1(\Omega)$, almost surely 
\begin{equation} \label{convergence_ergodic}
\frac{1}{n^3|A|}\int_{n A} g(\tau_y \omega) \dd y \xrightarrow[n \rightarrow +\infty]{} \E g  
\end{equation}
for all bounded measurable subset $A \subset \R^3$ with $|A| > 0$.
Actually, this ergodic theorem can be generalized, replacing $\dd y$ at the left-hand side by a stationary random measure $\mu_\omega$, and $\E g $ by $\int_\Omega g d\mu$, where $\mu$ is the Campbell measure associated to the random measure. See \cite[Chapter 13]{Daley.Jones.book2} for details. In the setting of our marked point process, where $\mu_\omega = \sum_{k} \delta_{(X'_k,\xi'_k)}$ for $\omega = \{(X'_k,\xi'_k)\}$,  we get that for any $F \in L^1(d\kappa)$, almost surely, 
\begin{equation}\label{convergence_ergodic2}
\frac{1}{n^3 |A|} \sum_{k, X'_k \in n A} F(\xi'_k) \xrightarrow[n \rightarrow +\infty]{}  \lambda \int_{\S^2} F(\xi) \dd \kappa(\xi). 
\end{equation}
for all bounded measurable subset $A \subset \R^3$ with $|A| > 0$.
For later use of stochastic two scale convergence, we record  
\begin{lemma} \label{lemma_Omega_compact}
$\Omega$ is a compact metric space.
\end{lemma}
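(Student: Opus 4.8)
The plan is to reduce compactness of $\Omega$ to sequential compactness, which is legitimate because $\Omega$, as a subset of the metrizable space $\mathrm{Point}_{\R^3\times\S^2}$, is itself metrizable. The only features of $\Omega$ that will be used are the hardcore condition $|X'_k-X'_l|\ge c'$ on the centers and the compactness of the mark space $\S^2$. The first thing I would record is the packing estimate they imply: for every $R>0$ there is $N_R<\infty$, depending only on $R$ and $c'$, such that any $\omega=\{(X'_k,\xi'_k)\}\in\Omega$ has at most $N_R$ points in the bounded set $B_R\times\S^2\subset\R^3\times\S^2$ --- indeed, for the indices $k$ with $X'_k\in B_R$ the balls $B(X'_k,c'/2)$ are pairwise disjoint and contained in $B_{R+c'/2}$, so there are at most $\big((2R+c')/c'\big)^3$ of them. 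In particular every $\mu_\omega$ is boundedly finite with a bound uniform over $\Omega$.

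Next, given a sequence $(\omega_n)_n$ in $\Omega$, I would extract a convergent subsequence by a diagonal argument. For each $R\in\N$ the points of $\omega_n$ lying in the compact set $\overline{B_R}\times\S^2$ form a finite tuple of length $\le N_R$; passing to a subsequence, this length becomes constant in $n$ and, after a suitable enumeration, each of these points converges inside $\overline{B_R}\times\S^2$. Diagonalizing over $R=1,2,3,\dots$ produces a subsequence (not relabelled) and a countable family $\omega=\{(X_k,\xi_k)\}$ to which the points of $\omega_n$ converge locally; testing against an arbitrary $\varphi\in C_c(\R^3\times\S^2)$, whose support lies in some $B_R\times\S^2$, then gives $\langle\mu_{\omega_n},\varphi\rangle\to\langle\mu_\omega,\varphi\rangle$, i.e. $\mu_{\omega_n}\to\mu_\omega$ in the weak-$*$ topology.

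Finally I would check that $\omega\in\Omega$: the packing bound passes to the limit, so $\mu_\omega$ is boundedly finite, and the inequalities $|X^{(n)}_k-X^{(n)}_l|\ge c'$ for $k\ne l$ are closed under the convergence above, hence $|X_k-X_l|\ge c'$; in particular distinct limit points have distinct, indeed $c'$-separated, centers, so no two points of the $\omega_n$ collapse in the limit, $\omega$ is a genuine simple marked configuration with $\mu_\omega=\sum_k\delta_{(X_k,\xi_k)}$, and $\omega\in\Omega$. This yields sequential compactness, hence the lemma. The only point requiring a little care is precisely this last step --- a priori the weak-$*$ limit of the counting measures $\mu_{\omega_n}$ could concentrate mass or shed it --- but the uniform packing bound $N_R$ supplies tightness on every $B_R\times\S^2$ while the $c'$-separation of centers prevents atoms from merging, which is exactly what rules this out; the remaining bookkeeping (points crossing $\partial B_R$, consistency of the diagonal extraction across different values of $R$) is routine.
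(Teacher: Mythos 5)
Your argument is correct and follows essentially the same route as the paper: a packing bound derived from the hardcore condition, a diagonal extraction over an exhaustion of $\R^3$, and the observation that the $c'$-separation of centers is preserved in the limit so that the limit configuration is again a genuine element of $\Omega$. Your write-up is a little more explicit about why the weak-$*$ limit cannot lose or concentrate mass and why no two atoms merge, but the content is the same.
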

\begin{proof} As explained above, the metrizable topology of $\Omega$ is inherited from the metrizable topology of weak-$\ast$ convergence on the space of boundedly finite Borel measures on $\R^3 \times \S^2$, by identifying any $\omega = \{ (X'_k, \xi'_k) \}$ with $\mu_\omega = \sum_{k} \delta_{(X'_k, \xi'_k)}$. To show compactness, it is then enough to prove that for any sequence  $(\omega_n)_{n \in \N}$ in $\Omega$, $\mu_{\omega_n}$ has a subsequence that weakly-* converges to an element of the form $\mu_\omega$, $\omega \in \Omega$. This is an easy consequence of the separation condition. Indeed, given any measurable bounded set $B \subset \R^3$, denoting $N_B(n)$ the number of points $(x, \xi)$ in $\supp(\mu_{\omega_n}) \cap (B \times \S^2)$, we have $N_B(n) \lesssim \frac{|B|}{(c')^3}$. We deduce that there exists a subsequence, still denoted $N_B(n)$ to lighten notations,  such that $N_B(n) = N_B$ is constant for large $n$.  From there, for large $n$, the restriction $\mu^n_B$ of $\mu_{\omega_n}$ to measurable subsets of $B \times \S^2$ reads  $\mu^n_B = \sum_{i=1}^{N_B} \delta_{Y_i^n, \eta_i^N}$ with $|Y_i^n - Y_j^n| \ge c'$. By compactness of $B \times \S^2$, one can further extract a subsequence of $\mu^n_B$ weakly converging to $\mu_B = \sum_{i=1}^{N_B} \delta_{Y_i, \eta_i}$,  with $|Y_i - Y_j| \ge c'$. By a diagonal process, the weak-* convergence of $\mu_{\omega_n}$ to some $\mu_\omega$ follows. 
\end{proof}
Finally, given a small parameter $\eps$, and given a smooth bounded domain $\mO$, we set 
$$\{(X_1^0, \xi_1^0) \dots (X_N^0, \xi_N^0) \}= \{(\eps X'_k, \xi'_k), k \in \N, B_\eps (X_k') \in \mO \}$$
Taking $n = \eps^{-1}$ and $A = \mO$ in \eqref{convergence_ergodic2} we find that almost surely 
\begin{equation} \label{asymptN}
    N \sim \frac{\lambda|\mO|}{\eps^3}, \quad \eps \rightarrow 0.
\end{equation}
This implies that almost surely, for $\eps$ small enough,
$$ |X_i^0 - X_j^0| \ge c' \eps  \ge \frac {c'}{2}|\mO|^{1/3} \lambda^{1/3} N^{-1/3}  $$
so that the separation assumption $|X_i^0 - X_j^0| \ge c N^{-1/3}$ is satisfied taking $c := \frac {c'}{2} |\mO|^{1/3} \lambda^{1/3}$. 

\subsection{Proof of Theorem \ref{theo3}} \label{subsec:proof_random1}
We first show that  we can replace the particle positions and orientations $X_i^{app}, \xi_i^{app}$, $1 \le i \le N$ by their  $0$-th order approximation:
\begin{align} 
\frac{\d}{\d t} \bar X^{app}_i & = u(\bar X_i^{app} ), \quad \bar X_i(0) = X_i^0, \\ 
\frac{\d}{\d t} \bar \xi^{app}_i & = M(\bar \xi_i^{app} ) \nabla u(\bar X_i^{app} )   \bar \xi_i^{app},  \quad \bar \xi_i^{app} (0) = \xi_i^0.
\end{align}
We recall  these approximate positions and orientations are generated through the flow $\Phi (s,x)$ and $\Xi (s,x,\xi)$ defined in \eqref{Phi}--\eqref{Xi}.
In the following, we will typically fix $t>0$ and often omit the time variable.
By Theorem \ref{th:well-posedness}, $\Phi \in W^{2,\infty}(\R^3;\R^3)$, $\Xi \in W^{1,\infty}(\R^3 \times \S^2;\S^2)$ and, since $\dv u = 0$,  $\Phi$ is a volume preserving bi-Lipschitz map.

\begin{lemma} \label{lem.sum.app}
For any $\Psi_1, \Psi_2 \in C(\R^3 \times \S^2; \R_0^{3\times3})$ we have
\begin{align}
    \lim_{N\to \infty}\frac 1 {N^2} \sum_i \sum_{j \neq i} &\Psi_1 (X_i^{app},\xi_i^{app}) :  \big( \nabla^2 G(X_i^{app} - X_j^{app})  \Psi_2(X_j^{app},\xi_j^{app})\big) \\
    &- \Psi_1(\bar X_i^{app},\bar \xi_i^{app}) :  \big( \nabla^2 G(\bar X_i^{app} - \bar X_j^{app})  \Psi_2(\bar X_j^{app},\bar \xi_j^{app})\big) = 0
\end{align}
\end{lemma}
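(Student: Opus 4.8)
The plan is to compare the two double sums term by term, splitting according to whether the particles are close or far, using the quantitative bound from Theorem~\ref{th:app} on $|(X_i^{app},\xi_i^{app}) - (\bar X_i^{app},\bar \xi_i^{app})|$ together with the Lipschitz control on the relative distances. First, recall from Theorem~\ref{th:app} (more precisely from \eqref{app.positions.orientations}, \eqref{dmin.control.thm} and \eqref{dmin.control.app}) that, under the present hypotheses, we have for $N$ large, uniformly in $i$,
\begin{align*}
|X_i^{app}(t) - \bar X_i^{app}(t)| + |\xi_i^{app}(t) - \bar \xi_i^{app}(t)| &\lesssim \phi^{4/3} + \phi^2 |\log\phi|\log N + \phi \log N\, \mathcal W_\infty(f_N^0,f^0) =: \delta_N \to 0,
\end{align*}
and moreover $\tfrac1C|X_i^0 - X_j^0| \le |X_i^{app}-X_j^{app}|,|\bar X_i^{app}-\bar X_j^{app}| \le C|X_i^0-X_j^0|$, while \eqref{ass:separation} gives $|X_i^0-X_j^0|\gtrsim N^{-1/3}$. (Here one uses that $\mathcal W_\infty(f_N^0,f^0)\to0$, hence $\delta_N\to 0$; note also $\delta_N \gtrsim N^{-1/3}$ is not needed.) Both families thus satisfy the separation hypothesis \eqref{ass:separation} with a common constant, so Corollary~\ref{coro:bound_IN} applies to each.

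\textbf{Step 1: a stability estimate for $I^N$ as a function of the configuration.} The natural idea is to interpolate: write the difference of the two sums as a telescoping difference in which we change $\Psi_1(X_i^{app},\xi_i^{app})\to\Psi_1(\bar X_i^{app},\bar\xi_i^{app})$, then $\nabla^2 G(X_i^{app}-X_j^{app})\to\nabla^2G(\bar X_i^{app}-\bar X_j^{app})$, then $\Psi_2(X_j^{app},\xi_j^{app})\to\Psi_2(\bar X_j^{app},\bar\xi_j^{app})$. For the first and third replacements we use that $\Psi_1,\Psi_2$ are uniformly continuous on the relevant compact set (all points lie in a fixed bounded region by \eqref{ass:bounded.cloud} and the transport bounds), so the perturbation in $\Psi_\ell$ is bounded by a modulus of continuity $\omega(\delta_N)\to0$; combined with Corollary~\ref{coro:bound_IN} applied with $\Psi_1$ or $\Psi_2$ replaced by the difference quotients (which are uniformly bounded and supported where the original ones are), these contributions are $O(\omega(\delta_N))\to0$. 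The delicate replacement is the middle one, involving the singular kernel $\nabla^2 G$.

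\textbf{Step 2: the kernel replacement.} Here I would not use the energy reformulation but a direct estimate, exploiting $|\nabla^3 G(x)| \lesssim |x|^{-4}$ and $|X_i^{app}-X_j^{app} - (\bar X_i^{app}-\bar X_j^{app})| \lesssim \delta_N$ (from the bound above; a cruder bound than \eqref{relative.distance} suffices). Splitting into pairs with $|X_i^0-X_j^0| \ge \delta_N^{1/2}$ and pairs with $N^{-1/3}\lesssim|X_i^0-X_j^0| < \delta_N^{1/2}$: for the far pairs, $|\nabla^2G(X_i^{app}-X_j^{app})-\nabla^2 G(\bar X_i^{app}-\bar X_j^{app})| \lesssim \delta_N |X_i^0-X_j^0|^{-4}$, and $\tfrac1{N^2}\sum_{i}\sum_{j\neq i}\delta_N|X_i^0-X_j^0|^{-4} \lesssim \delta_N S_4 \lesssim \delta_N N^{1/3} \to 0$ by \eqref{S_4.est} — wait, this needs $\delta_N N^{1/3}\to 0$, which may fail; so instead restrict far pairs to $|X_i^0-X_j^0|\ge \rho$ for a fixed small $\rho>0$, giving a contribution $\lesssim \delta_N \rho^{-4}\to0$ for each fixed $\rho$, while the intermediate shell $N^{-1/3}\lesssim|X_i^0-X_j^0| < \rho$ is handled uniformly in $N$ by Corollary~\ref{coro:bound_IN} (or directly by the energy bound), since the fraction of indices $i$ with some $j$ in a $\rho$-ball is controlled and $\|\,\cdot\,\|_\infty$ of the truncated $\Psi$'s is unchanged — this gives a bound $\lesssim \varphi(\rho)$ with $\varphi(\rho)\to0$ as $\rho\to 0$, uniformly in $N$, by dominated convergence against the Calderón–Zygmund-type bound of Corollary~\ref{coro:bound_IN} restricted to short-range pairs. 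Sending first $N\to\infty$ then $\rho\to0$ closes the argument.

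\textbf{Main obstacle.} The crux is the near-diagonal contribution in the kernel replacement: the perturbation $\delta_N$ need not be small compared with the minimal interparticle distance $N^{-1/3}$, so one cannot naively Taylor-expand $\nabla^2 G$ across all pairs. The fix is the two-parameter limit just described — truncate at a fixed scale $\rho$, use the uniform-in-$N$ boundedness from Corollary~\ref{coro:bound_IN} (equivalently the disjoint-support energy estimate of Theorem~\ref{theo2}) to bound the short-range part by something vanishing with $\rho$, and only Taylor-expand on the long-range part where $\delta_N \ll \rho \le |X_i^{app}-X_j^{app}|$. This is exactly the mechanism that makes $I^N$ robust under small perturbations of the configuration, and it is the only place where genuine care (rather than routine estimation) is required.
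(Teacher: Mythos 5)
Your approach diverges from the paper's at the crucial step, and the divergence introduces a genuine gap. The paper does not rely on the absolute displacement bound $|X_i^{app}-\bar X_i^{app}|\lesssim \delta_N$ together with a truncation at a fixed scale $\rho$. Instead, by mimicking the proof of \eqref{relative.distance} for the pair $(X_i,\bar X_i^{app})$ and combining with \eqref{relative.distance} itself, it derives the \emph{relative} displacement estimate
\begin{align}
|X_i^{app}-\bar X_i^{app}-(X_j^{app}-\bar X_j^{app})| \lesssim r + \phi\log N\,|X_i-X_j|,
\end{align}
which scales with the interparticle distance $|X_i-X_j|$. Since $|X_i^{app}-X_j^{app}|\gtrsim|X_i^0-X_j^0|$ and $r/|X_i^0-X_j^0|\lesssim \phi^{1/3}$, $\phi\log N\to 0$, this relative displacement is small compared to $|X_i^{app}-X_j^{app}|$ \emph{uniformly in $i,j$}, so one can Taylor-expand $\nabla^2 G$ across all pairs and simply sum the resulting bound $\tfrac{r}{|X_i^0-X_j^0|^4}+\tfrac{\phi\log N}{|X_i^0-X_j^0|^3}$ against $S_3, S_4$ (giving $\phi^{1/3}+\phi(\log N)^2\to 0$). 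This is the mechanism that renders the near-diagonal pairs harmless, and it is absent from your argument.

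The fallback you propose for the near-diagonal pairs does not work as stated. You claim the short-range part $\frac{1}{N^2}\sum_{i\ne j,\,|X_i-X_j|<\rho}\Psi_1:\nabla^2G\,\Psi_2$ is $\lesssim\varphi(\rho)$ uniformly in $N$ by Corollary~\ref{coro:bound_IN}, \enquote{since the fraction of indices $i$ with some $j$ in a $\rho$-ball is controlled.} That fraction is not controlled — for $N$ large, essentially every particle has $\sim N\rho^3\gg 1$ neighbours within distance $\rho$. More fundamentally, Corollary~\ref{coro:bound_IN} (and the underlying energy identity of Theorem~\ref{theo2}) bounds quantities of the form $I^N[\Psi_1,\Psi_2]$ with the full sum over $j\ne i$; the truncation to pairs with $|X_i-X_j|<\rho$ is not of the form $\Psi_1(x_1,\xi_1)\Psi_2(x_2,\xi_2)$ and so falls outside the scope of that corollary, and there is no \enquote{Corollary~\ref{coro:bound_IN} restricted to short-range pairs.} The uniform-in-$N$ boundedness of $I^N$ comes precisely from cancellations that are destroyed by the sharp spatial cutoff. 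So the short-range contribution is not $O(\varphi(\rho))$ in any obvious sense, and the two-parameter argument does not close.

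A smaller point: you cite \eqref{app.positions.orientations} for the bound on $|X_i^{app}-\bar X_i^{app}|+|\xi_i^{app}-\bar\xi_i^{app}|$, but \eqref{app.positions.orientations} compares $(X_i,\xi_i)$ with $(X_i^{app},\xi_i^{app})$, not $(X_i^{app},\xi_i^{app})$ with $(\bar X_i^{app},\bar\xi_i^{app})$. The paper obtains the bound $|X_i^{app}-\bar X_i^{app}|+|\xi_i^{app}-\bar\xi_i^{app}|\lesssim\phi\log N+r$ by re-running Steps 2 and 3 of the proof of Theorem~\ref{th:app} with $\bar X^{app},\bar\xi^{app}$ in place of $X^{app},\xi^{app}$, and it uses the \emph{relative} version of that estimate — not the absolute one — to control the kernel replacement.
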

\begin{proof}
Mimicking the proof of \eqref{relative.distance}, we get for all $1 \leq i,j \leq N$
\begin{align}
        |X_i - \bar X_i^{app} - (X_j - \bar X_j^{app})| &\lesssim r + \phi \log N |X_i - X_j|.
\end{align}
Combining with \eqref{relative.distance} itself, we get 
$$|X_i^{app} - \bar X_i^{app} - (X_j^{app} - \bar X_j^{app})| \lesssim r + \phi \log N |X_i - X_j|.
$$
In particular, 
\begin{align}
   &| \nabla^2 G(X_i^{app} - X_j^{app})  - \nabla^2 G(\bar X_i^{app} - \bar X_j^{app})| \\
   &\hspace{2cm}\lesssim (r+ \phi \log N |X_i - X_j|)\bigg(\frac{1}{|X_i^{app} - X_j^{app}|^4} + \frac{1}{|\bar X_i^{app} - \bar X_j^{app}|^4} \bigg) \\
   &\hspace{2cm} \lesssim  \frac{r}{|X_i^0 - X_j^0|^4} + \phi \log N \frac 1 {|X_i^0 - X_j^0|^3}
\end{align}
where we used that $\Phi$ is bi-Lipschitz and \eqref{dmin.control}.
Arguing as in the proof of Theorem \ref{th:app}, Steps 2 and 3, we deduce that for all $1 \leq i \leq N$
\begin{align}
        |X_i^{app} - \bar X_i^{app}| + |\xi_i^{app} - \bar \xi_i^{app}| &\lesssim \phi \log N  + r.
\end{align}
From this we deduce the claim since $\phi (\log N)^2 \to 0$ by \eqref{ass:diluteness}.
\end{proof}
We now turn to the proof of Theorem \ref{theo3}. Let $T > 0$ be arbitrarily large. From the bi-Lipschitz character of the flow $\Phi$ of $u$, there exists $C > 0$ such that for all $t \in [0,T]$ 
\begin{equation} \label{dmin.control2}
\frac{1}{C} |X^0_i - X^0_j| \le  |\bar X_i^{app}(t) - \bar X_j^{app}(t)| \le C |X^0_i - X^0_j|  .
\end{equation} 
 We shall rely on Theorem \ref{theo2} and  prove that both 
$$ \int \nabla \bar h^{N,\eta}[\Psi_1] :  \nabla \bar h^{N,\eta}[\Psi_2] \dd x  \quad \text{ and } \quad    \sum_i \int \nabla \bar h^{N,\eta}_i[\Psi_1] :  \nabla \bar h^{N,\eta}_i[\Psi_2] \dd x$$
have individually a limit, almost surely, with the choice $\eta = \frac{c'}{ 8 C}\eps  = c'' \eps$, where $C$ is the constant from \eqref{dmin.control2}.  Here $\bar h^{N,\eta},\bar h^{N,\eta}_i$ are defined as $ h^{N,\eta}, h^{N,\eta}_i$ corresponding to the particle configuration $(\bar X_i^{app},\bar \xi_i^{app})_i$.
Note that for $\eps$ small enough, still by \eqref{asymptN}, $\eta$ obeys the bound \eqref{eta_N}. Moreover, by polarization, we can restrict to the case $\Psi_1 = \Psi_2 = \Psi$. 
\begin{proposition} \label{pro:det.limit}
Almost surely, $\int |\nabla \bar h^{N,\eta}[\Psi]|^2 \dd x$ converges to a deterministic limit.
\end{proposition}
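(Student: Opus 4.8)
The plan is to identify $\int_{\R^3} |\nabla \bar h^{N,\eta}[\Psi]|^2 \dd x$ with an energy of a Stokes flow whose source is a periodic-in-$\eps$-scale sum of smeared dipoles located at the points $\bar X_i^{app} = \Phi(t,\eps X'_k)$, and then to pass to the limit $\eps \to 0$ using stochastic two-scale convergence. The difficulty is that, although at time $t=0$ the points $\eps X'_k$ form a (rescaled) stationary ergodic configuration, the transported points $\Phi(t, \eps X'_k)$ are no longer stationary — the volume-preserving diffeomorphism $\Phi(t,\cdot)$ locally distorts the microstructure. The key device to handle this, as announced in Subsection~\ref{subsec:outline}, is a quenched / locally-periodic version of two-scale convergence in the spirit of \cite{ZhiPia2006}: one freezes the macroscopic variable $x$, writes the local microstructure near $x$ as $\tau_{?}\,\omega$ composed with the linearization $\nabla \Phi(t, \Phi^{-1}(t,x))$ of the flow, and lets $\eps \to 0$.

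Concretely I would proceed as follows. \emph{Step 1 (rewriting the energy).} Using $-\Delta \bar h^{N,\eta}[\Psi] + \nabla \bar p^{N,\eta}[\Psi] = \frac 1 N \sum_i S^\eta[\Psi(\bar X_i^{app}, \bar\xi_i^{app})](\cdot - \bar X_i^{app})$ and Lemma~\ref{lemma_regul}, test against $\bar h^{N,\eta}[\Psi]$ itself to get
\begin{align*}
\int_{\R^3} |\nabla \bar h^{N,\eta}[\Psi]|^2 \dd x = \frac 1 N \sum_i \int_{\R^3} \Theta^\eta[\Psi(\bar X_i^{app},\bar\xi_i^{app})](x - \bar X_i^{app}) : \nabla \bar h^{N,\eta}[\Psi](x) \dd x.
\end{align*}
Since $\Theta^\eta[A](\cdot - \bar X_i^{app})$ is supported in $B(\bar X_i^{app},\eta)$, of amplitude $O(\eta^{-3})$, and $\eta \sim c'' \eps$, this expresses the energy as an average over the microstructure of a quantity built from the Stokes solution. \emph{Step 2 (compactness / oscillation test functions).} By Corollary~\ref{coro:bound_IN} the family $\nabla \bar h^{N,\eta}[\Psi]$ is bounded in $L^2(\R^3)$, so it admits two-scale limits. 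The test functions one needs are solutions of the cell problem: for a frozen macroscopic point $x_0 = \Phi(t,y_0)$ and frozen realization $\omega$, one solves a Stokes problem on $\R^3$ with dipole source $\sum_k S^1[\Psi(x_0, \Xi(t,y_0,\xi'_k))](\cdot - \nabla\Phi(t,y_0) X'_k)$ — i.e. the stationary corrector associated with the \emph{pushed-forward} point pattern $\nabla\Phi(t,y_0)\,\Lambda(\omega)$ with the transported orientations. This corrector depends on $y_0$ only through $\nabla\Phi(t,y_0)$ and the marks $\Xi(t,y_0,\cdot)$, both continuous in $y_0$; this continuity is what lets the quenched two-scale machinery of \cite{ZhiPia2006} apply. \emph{Step 3 (passing to the limit).} Combine the ergodic theorem \eqref{convergence_ergodic}--\eqref{convergence_ergodic2} (applied after the change of variables $y \mapsto \Phi^{-1}(t,y)$, using that $\Phi(t,\cdot)$ is volume preserving and bi-Lipschitz, cf. \eqref{dmin.control2}) with the two-scale convergence to obtain that $\int |\nabla \bar h^{N,\eta}[\Psi]|^2 \dd x$ converges almost surely to $\int_{\mO} \int_{\Omega} (\text{energy density of the corrector at } y, \omega)\, dP(\omega)\, dy$, which is deterministic by ergodicity. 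Here the change of variables turns the non-stationary configuration $\Phi(t,\eps X'_k)$ back into the stationary one $\eps X'_k$ at the price of the slowly-varying coefficient $\nabla\Phi(t,\cdot)$, precisely the situation the locally-periodic/quenched framework is designed for.

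The main obstacle I expect is Step 2–3: rigorously setting up the quenched two-scale convergence for a \emph{point-process}-driven (rather than periodic) microstructure whose stationarity has been broken by $\Phi(t,\cdot)$. One must (i) make sense of the corrector problem on the probability space — define the relevant subspace of "potential" fields and solve the associated variational problem, checking it is well-posed uniformly in the frozen parameters, using the hardcore condition \eqref{ass:separation'} and the $L^2$-bound from Corollary~\ref{coro:bound_IN}; (ii) show the $y_0$-continuity of the corrector so that the frozen-coefficient approximation is legitimate; and (iii) justify commuting the $\eps \to 0$ limit with the localization in $x$. The regularization of Subsection~\ref{subsec:proof_deterministic2} (Theorem~\ref{theo2}) is essential here: it guarantees that $\int |\nabla \bar h^{N,\eta}[\Psi]|^2$ is $\eta$-independent for $\eta \le \frac c4 N^{-1/3}$, so we are free to choose $\eta = c''\eps$ comparable to the microscopic scale, which is exactly the scaling required for two-scale convergence to see a nontrivial limit. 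Once the deterministic limit of $\int |\nabla \bar h^{N,\eta}[\Psi]|^2$ is obtained, the corresponding statement for $\sum_i \int |\nabla \bar h_i^{N,\eta}[\Psi]|^2$ is easier — it is a plain sum of decoupled self-energies, each equal by homogeneity to $N^{-1}\|\nabla h^1[\Psi(\bar X_i^{app},\bar\xi_i^{app})]\|_{L^2}^2$, whose average converges by the ergodic theorem \eqref{convergence_ergodic2} — and then Theorem~\ref{theo3} follows from Theorem~\ref{theo2} and polarization, together with Lemma~\ref{lem.sum.app} to replace $X_i^{app},\xi_i^{app}$ by $\bar X_i^{app},\bar\xi_i^{app}$.
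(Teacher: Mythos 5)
Your plan follows essentially the same route as the paper's proof: exploit the $\eta$-independence from Theorem \ref{theo2} to set $\eta \sim c''\eps$, undo the loss of stationarity by composing with $\Phi^{-1}$ so that the transported dipole configuration becomes the stationary one modulated by the slowly varying coefficient $\nabla\Phi$, apply quenched two-scale convergence \`a la \cite{ZhiPia2006} to identify a macroscopic Stokes equation plus a corrector (cell) problem in $L^2_{pot}(\Omega)$ well-posed by Lax--Milgram, and conclude via the energy identity and ergodicity that the limit is deterministic. The only (inessential) organizational difference is that the paper first replaces the source by its frozen-macroscopic-variable version $g(x,\tau_{x/\eps}\omega)$ and controls the remainder in $\dot H^1$ before two-scaling, whereas you propose to two-scale the original energy directly; otherwise the argument is the one carried out in the paper.
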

\noindent
{\em Proof.} We remind the definition of $\Theta^\eta[A]$ in \eqref{def_Psi_eta}, and denote 
\begin{align}
    F_\eps(x,\omega) &:= g\Big(\Phi^{-1}(x),\tau_{\frac{\Phi^{-1}(x)}\eps} \omega\Big), \\
    g(x,\omega) &:= \1_\mO(x)\sum_{k \in \N} \Theta^{c''}[\Psi(\Phi(x),\Xi(x,\xi_k')](-\nabla\Phi(x) X_k').
\end{align}
Let $h_\eps = h_\eps(x,\omega) \in \dot H^1(\R^3)$ be the solution to
\begin{align} \label{h^eps}
\begin{aligned}
     - \Delta h_\eps + \na  p_\eps & = \div F_\eps \\
     \div  h_\eps & = 0.
\end{aligned}
\end{align}
Moreover let $r_\eps$ be defined through
$$ \bar h^{N,\eta}[\Psi] = \frac{1}{N\eps^3} h_\eps + r^\eps. $$ \\
\emph{Step 1: Proof that
\begin{align} 
    \lim_{\eps \to 0} \|\na r^\eps\|_{L^2(\R^3)} = 0.
\end{align}
}
We define 
\begin{align} \label{tilde.F_eps}
    \tilde F_\eps(x,\omega) =  \eps^3 \sum_{k \in \N} \1_{\mO}(\eps X'_k) \Theta^{c'' \eps}[\Psi(\bar X^{app}_k,\bar \xi^{app}_k)](x -  \bar X^{app}_k).
\end{align}
Then,
\begin{align}
     - \Delta \bar h^{N,\eta}[\Psi] + \na \bar p^{N,\eta}[\Psi] & = \frac{1}{N\eps^3}  \div \tilde F_\eps \\
     \div \bar h^{N,\eta}[\Psi] & = 0.
\end{align}
Hence, it suffices to show that 
\begin{align}  \label{F.tilde.F}
    \| \tilde F_\eps(\cdot,\omega) - F_\eps(\cdot,\omega)\|_{L^2(\R^3)} = o(1).
\end{align}



By the choice of $c''$ (before the statement of Proposition \ref{pro:det.limit}), the sum in \eqref{tilde.F_eps} contains at most one nonzero term.
Since $\Theta^\eta[A](x) = \frac{1}{\eta^3}\Theta^{1}[A](x/\eta)$,
we have 
\begin{align}
    \tilde F_\eps(x,\omega) =  \sum_{k \in \N} \1_{\mO}(\eps X'_k) \Theta^{c''}[\Psi(\bar X^{app}_k,\bar \xi^{app}_k)]\bigg(\frac{x -  \bar X^{app}_k}{\eps}\bigg).
\end{align}
Using  $\supp  \Theta^{c''} = \overline{B(0,c'')}$, that $A \mapsto \Theta^{c''}[A]$ is linear and satisfies $\|\Theta^{c''}[A]\|_{L^\infty} \leq C$ 
 and recalling $(\bar X^{app}_k,  \bar \xi^{app}_k) = (\Phi(\eps X_k'),\Xi(\eps X_k',  \xi_k'))$, we find
\begin{align}
    &\bigg|\1_{\mO}(\eps X'_k) \Theta^{c''}[\Psi(\bar X^{app}_k,\bar \xi^{app}_k)]\bigg(\frac{x -  \bar X^{app}_k}{\eps}\bigg) -  \1_{\mO}(\eps X'_k)\Theta^{c''}[\Psi(x,\Xi(\Phi^{-1}(x),  \xi_k'))]\left(\frac{x -  \Phi(\eps X_k')}{\eps}\right) \bigg| \\
    &= \bigg|\1_{\mO}(\eps X'_k) \Theta^{c''}[\Psi(\bar X^{app}_k,\bar \xi^{app}_k) - \Psi(x,\Xi(\Phi^{-1}(x),  \xi_k'))]\left(\frac{x -  \Phi(\eps X_k')}{\eps}\right) \bigg| \\
    & \leq C \delta_\eps,
\end{align}
where 
\begin{align}
    \delta_\eps := \sup_{(x,\xi) \in \mO \times \S^2} \sup_{y \in B(x,c'' \eps)} |\Psi(x,\Xi(\Phi^{-1}(x),  \xi)) - \Psi(y,\Xi(\Phi^{-1}(y),  \xi))| \to 0 \quad \text{as } \eps \to 0
\end{align}
since $\Psi$ is uniformly continuous on compact sets and $\Phi$ and $\Xi$ are bi-Lipschitz.
Therefore,
\begin{align} \label{manip.F.tilde.1}
\begin{aligned}
    \tilde F_\eps(x,\omega) 
    &  =  \sum_k \1_{\mO}(\eps X'_k)\Theta^{c''}[\Psi(x,\Xi(\Phi^{-1}(x),  \xi_k'))]\left(\frac{x -  \Phi(\eps X_k')}{\eps}\right) + \1_{\Phi(\mO)} o(1) \\
    & = \1_{\Phi(\mO)}(x)   \sum_k \Theta^{c''}[\Psi(x,\Xi(\Phi^{-1}(x),  \xi_k'))]\left(\frac{x -  \Phi(\eps X_k')}{\eps}\right) \\
    & + \1_{B(\partial \mO,c''\eps)}(x) O(1) + \1_{\Phi(\mO)} o(1) ,
\end{aligned}
\end{align}
 where $B(\partial \mO,c''\eps) = \{x \in \R^3 : \dist(x,\partial \mO) < c'' \eps\}$.
By Taylor expansion, we write
\begin{align}
    x - \Phi(\eps X_k') = x - \Phi(\Phi^{-1}(x)) + \nabla \Phi(\Phi^{-1}(x))( \Phi^{-1}(x) - \eps X_k') + O(\eps^2) 
\end{align}
for all $x$ and $k$ with  $\frac{x -  \Phi(\eps X_k')}{\eps} \in \supp \Theta^{c''}$.
Since $\Theta^{c''}$ is smooth in $B(0,c'')$ and $\Psi$ is uniformly bounded in $\mO \times \S^2$, we deduce
\begin{align} \label{manip.F.tilde.2}
\begin{aligned}
    &\1_{\Phi(\mO)}(x)   \sum_k \Theta^{c''}[\Psi(x,\Xi(\Phi^{-1}(x),  \xi_k'))]\left(\frac{x -  \Phi(\eps X_k')}{\eps}\right) \\
    & \qquad = \1_{\Phi(\mO)}(x)   \sum_k \Theta^{c''}[\Psi(x,\Xi(\Phi^{-1}(x),  \xi_k'))]\bigg(\frac{\nabla \Phi(\Phi^{-1}(x))( \Phi^{-1}(x) - \eps X_k')}{\eps}\bigg) + O(\eps) \1_{\Phi(\mO)}(x)  \\
    & \qquad \qquad + O(1) \1_{\Phi(\mO)}(x)   \sum_k \1_{B(\partial B(c'' \eps,\Phi(\eps X_k')),L \eps^2)},
\end{aligned}
\end{align}
where $L := c'' \|\nabla^2 \Phi\|_\infty \|\nabla \Phi^{-1}\|_\infty $.
Combining \eqref{manip.F.tilde.1} and \eqref{manip.F.tilde.2} yields \eqref{F.tilde.F}.\\[3mm]
\emph{Step 2: Two-scale convergence of $(h_\eps \circ \Phi,p_\eps \circ \Phi)$.}

As $\Omega$ is compact, \emph{cf.} Lemma \ref{lemma_Omega_compact}, we can exploit the  notion of quenched 2-scale convergence introduced by Piatnitski and Zhikov in \cite{ZhiPia2006}, see also \cite{HeNeVa2022} for recent developments. The properties we need are listed in Appendix \ref{appA}.

First, from the continuity of Riesz transforms over $L^p(\R^3)$ for all $1 < p < +\infty$, we find that almost surely
$$ \|\na h_\eps \circ \Phi\|_{L^p} +  \|p_\eps \circ \Phi \|_{L^p} \le C \|F_\eps\|_{L^p(\R^3)} \leq C, $$
where we used in the first estimate that $\Phi$ is a volume preserving diffeomorphism and in the second estimate that $F$ is bounded and compactly supported in $\mathcal O$ with respect to the first variable.
In particular, $\|h_\eps \circ \Phi\|_{H^1(K)} \le C$ for all compact subset $K$. From the results of Appendix \ref{appA}, it follows that almost surely,  there exists $h_0 =h_0(x) \in L^2_{loc}(\R^3)$,  $D_H = D_H(x,\omega) \in L^2_{loc}(\R^3, L^2_{pot}(\Omega))$, $P_0 = P_0(x,\omega) \in L^2_{loc}(\R^3, L^2(\Omega))$  and a subsequence in $\eps$, along which 
\begin{itemize}
    \item $h_\eps \circ \Phi$, $\na (h_\eps \circ \Phi) $ and $p_\eps \circ \Phi$   two-scale converge respectively  to $h_0$, $\na h_0 + D_H$ and $P_0$.
    \item $h_\eps \circ \Phi$ and $p_\eps \circ 
    \Phi$ converge weakly to $h_0$ and to $p_0(x) = \E [P_0(x, \cdot)]$, in $\dot{H}^1(\R^3)$ and $L^2(\R^3)$ respectively. 
\end{itemize}
We stress that the fields  $D_H$, $P_0$ and the subsequence in $\eps$ \emph{a priori} depend on the realization.

To characterize the two-scale limits, we need to pass to the limit in the weak formulation of the Stokes equation \eqref{h^eps}.\\[3mm]
\emph{Step 3: Proof that under a typical realization $\omega$, we have $F_\eps(\cdot,\omega) \circ \Phi  \xrightarrow[]{2} g$.
}

This follows from the ergodic theorem \eqref{convergence_ergodic} upon freezing the slow variable. More, precisely, we partition a neighborhood of $\mO$ by a collection $\mC$ of cubes of diameter $\alpha$, and choose a point $x_C$ in each cube $C \in \mC$. Let $\varphi \in C_c^\infty(\R^3)$.
We then introduce (supressing the dependence on $\omega$)
\begin{align}
    f_{\eps,\alpha}(x) := \sum_{C \in \mC, C \subset \mO} \varphi(x_C) g(x_C,\tau_{x/\eps} \omega) \1_{C}(x)
\end{align}
We observe that for all $k\in \N$, $C \in \mC$ and  $x \in C$ 
\begin{align}
    \Big|\nabla \Phi(x_C)\Big(\frac x \eps - X_k'\Big)\Big| \leq c'' (1 - \alpha L) \quad \Longrightarrow \quad \Big|\nabla \Phi(x)\Big(\frac x \eps - X_k'\Big)\Big| \leq c'', \\
    \Big|\nabla \Phi(x_C)\Big(\frac x \eps - X_k'\Big)\Big| \geq c'' (1 + \alpha L) \quad \Longrightarrow \quad \Big|\nabla \Phi(x)\Big(\frac x \eps - X_k'\Big)\Big| \geq c'',
\end{align}
where $L =  \|(\nabla \Phi)^{-1}\|_\infty \|\nabla^2 \Phi\|_\infty$. Indeed, for $x \in C$, 
\begin{align}
    \Big|(\nabla \Phi(x) - \nabla \Phi(x_C))\Big(\frac x \eps - X_k'\Big)\Big| &\leq \alpha \|\nabla^2 \Phi\|_\infty \Big|\frac x \eps - X_k'\Big| \\
    &\leq \alpha \|\nabla^2 \Phi\|_\infty |(\nabla \Phi(x_C))^{-1} | \Big|\nabla \Phi(x_C)\Big(\frac x \eps - X_k'\Big)\Big|
\end{align}
which yields both implications (the second one by contraposition).
Hence, using once more that $\Theta^{c''}$ is smooth in $\supp \Theta^{c''} = B(0,c'')$
and that $\Psi$ is locally uniformly continuous, we deduce
\begin{align} \label{eps.alpha.uniform}
     \limsup_{\alpha \to 0} \sup_{\eps}\|f_{\eps,\alpha} - \varphi  F_\eps \circ \Phi\|_{L^2(\R^3)} = 0.
\end{align}

Let  $b \in L^2(\Omega)$. Then, by the ergodic theorem \eqref{convergence_ergodic}, we have
\begin{align}
    \lim_{\eps \to 0} \int_{\R^3} f_{\eps,\alpha}  b(\tau_{x/\eps} \omega) &= \lim_{\eps \to 0} \sum_{C \in \mC, C \subset \mO} \int_{C} g(x_C,\tau_{x/\eps}\omega) \varphi(x_C) b(\tau_{x/\eps} \omega) \dd x \\
    &=  \lim_{\eps \to 0} \sum_{C \in \mC, C \subset \mO} \eps^3 \varphi(x_C) \int_{\eps^{-1} C}  g(x_C,\tau_y \omega) b(\tau_{y} \omega)  \\
     & = \sum_{C \in \mC, C \subset \mO} |C| \varphi(x_C) \E[g(x_C,\cdot) b].
\end{align}
The right-hand side converges to $\int \E[g(x,\cdot))  b] \varphi(x) \dd x $ as $\alpha \to 0$. Since the uniform bound \eqref{eps.alpha.uniform} allows us to commute the limits in $\eps$ and $\alpha$ this proves the two-scale convergence of $F_\eps$. \\[3mm]
\emph{Step 4: Characterization of the limit PDEs.}
We  test the weak formulation of \eqref{h^eps} with $\varphi \circ \Phi^{-1}$ where $\varphi  \in C^1_c(\R^3, \R^3)$ and apply a change of variables yielding
\begin{align}
    \int_{\R^3} (\nabla h_\eps \circ \Phi)  : (\nabla \varphi(x) (\nabla \Phi)^{-1}) - (p_\eps \circ \Phi) \na \varphi : (\nabla \Phi)^{-1} = - \int_{\R^3}  (F_\eps \circ \Phi) : (\nabla \varphi(x) (\nabla \Phi)^{-1}).
\end{align}
From the weak convergence of $\na h_\eps \circ \Phi$, $p_\eps \circ \Phi$, and $F_\eps \circ \Phi$, we find 
\begin{align} \label{eqh0}
\begin{aligned}
& \int_{\R^3} \na h_0 (\na \Phi)^{-1} : (\na \varphi (\na \Phi)^{-1}) - \int_{\R^3} p_0 \na \varphi : (\nabla \Phi)^{-1} \\
&\hspace{5cm} = - \int_{\mO}  \E [g] : (\nabla \varphi (\nabla \Phi)^{-1})  \quad \forall \varphi \in C^1_c(\R^3, \R^3),  \\
& \int_{\R^3} \na h_0 : (\nabla \Phi)^{-1} \psi= 0 \quad \forall \psi \in C^0_c(\R^3).     
\end{aligned}
\end{align}
Next, we  test the weak formulation of \eqref{h^eps} with
$$ \tilde \varphi(x) = \eps  \tilde \psi(\Phi^{-1}(x)) b(\tau_{\Phi^{-1}(x)/\eps}\omega), \quad \psi(x) = \tilde \psi(\Phi^{-1}(x)) c(\tau_{\Phi^{-1}(x)/\eps}\omega)  $$
where $\tilde \psi \in C^\infty_c(\R^3)$, while $b=b(\omega)$  belongs to $C^1(\Omega)$ (with values in $\R^3$, see Appendix \ref{appA} for the definition of $C^1(\Omega)$), and $c=c(\omega)$ belongs to $L^2(\Omega)$  (with values in $\R$). 
By application of the two-scale convergence of $h_\eps \circ \Phi$ and $p_\eps \circ \Phi$, keeping in mind that $\E \na_\omega b = 0$,
\begin{align*}
 &    \int_{\R^3} \na h_\eps : \na \tilde \varphi  - \int_{\R^3} (\div \tilde \varphi ) p_\eps \\
 = &  \int_{\R^3} (\na h_\eps \circ \Phi) : (\na_\omega b(\tau_{x/\eps}\omega) (\nabla \Phi)^{-1})   \tilde{\psi}   -  \int_{\R^3}  (p_\eps \circ \Phi) \na_\omega  b(\tau_{x/\eps}\omega) : (\nabla \Phi)^{-1}   \tilde{\psi} \\  
 + & \eps \int_{\R^3} (\na h_\eps(x) \circ \Phi) : ((\na \tilde{\psi} (\nabla \Phi)^{-1}) \otimes b(\tau_{x/\eps}\omega))   - \eps \int_{\R^3}  ( p_\eps \circ \Phi) (\na \tilde{\psi} \otimes b(\tau_{x/\eps}\omega)) : (\nabla \Phi)^{-1}     \\
   = &    \int_{\R^3} (\na h_\eps \circ \Phi) : (\na_\omega b(\tau_{x/\eps}\omega) (\nabla \Phi)^{-1})   \tilde{\psi}   -  \int_{\R^3}  (p_\eps \circ \Phi) \na_\omega  b(\tau_{x/\eps}\omega) : (\nabla \Phi)^{-1}   \tilde{\psi} + O(\eps)  \\\xrightarrow[\eps \rightarrow 0]{} & \int_{\R^3} \E [(D_H  (\nabla \Phi)^{-1})  : (\na_\omega b (\nabla \Phi)^{-1})] \, \tilde \psi   - \int_{\R^3} \E [(\na_\omega b : (\nabla \Phi)^{-1}) (P_0 - p_0)] \, \tilde \psi
    \end{align*}
    Similarly, we get 
    $$\int_{\R^3} (\div h_\eps) \psi \xrightarrow[\eps \rightarrow 0]{} \int_{\R^3} \E [D_H : (\nabla \Phi)^{-1}) c ]\, \tilde \psi   $$
Using also the two-scale convergence of $F_\eps \circ \Phi$, we end up with
\begin{align}
& \int_{\R^3} \E [(D_H  (\nabla \Phi)^{-1})  : (\na_\omega b (\nabla \Phi)^{-1})] \, \tilde \psi   - \int_{\R^3} \E [(\na_\omega b : (\nabla \Phi)^{-1}) (P_0 - p_0)] \, \tilde \psi \\
&\hspace{9cm} = - \int_{\R^3} \E [g : (\na_\omega b (\nabla \Phi)^{-1}) ] \, \tilde \psi   \\
& \int_{\R^3} \E [D_H : (\nabla \Phi)^{-1}) c ]\, \tilde \psi = 0, 
\end{align}
As $\tilde \psi$ is arbitrary, this formulation is equivalent to: for all $b \in C^1(\Omega)$, for almost every $x$,
\begin{equation} \label{eqDH}
\begin{aligned}
&  \E [(D_H  (\nabla \Phi)^{-1})  : (\na_\omega b (\nabla \Phi)^{-1})]    - \E [(\na_\omega b : (\nabla \Phi)^{-1}) (P_0 - p_0)]  = -  \E [g : (\na_\omega b (\nabla \Phi)^{-1}) ]   \\
&  D_H : (\nabla \Phi)^{-1}) = 0, 
\end{aligned}
\end{equation} \\[3mm]
\emph{Step 5: Conclusion.}
By a density argument, we can replace $\na_\omega b$ by $D_H(x,\cdot)$ in  \eqref{eqDH} and find
\begin{equation} \label{energy_DH.new}
\E |D_H  (\nabla \Phi)^{-1}|^2  = -  \E [g : (D_H  (\nabla \Phi)^{-1})]  
\end{equation}
On the other hand, taking $\varphi = h_\eps$ in the weak formulation of \eqref{h^eps}, we have 
\begin{align}
\int_{R^3} |\na h_\eps|^2 = - \int_{\R^3} F_\eps : \na h_\eps &= \int_{\R^3} (F_\eps \circ \Phi) : \na (h_\eps \circ \Phi) (\nabla \Phi)^{-1}  \\
&\xrightarrow{\eps \rightarrow 0} - \int_{\R^3} \E [g(x,\cdot) : (\na h_0 + D_H)(\nabla \Phi)^{-1}].
\end{align}
where the convergence comes from  the two-scale convergence of $\na h_\eps$, and the fact that  
$(F_\eps \circ \Phi)(x) = g(x,\tau_{x/\eps}\omega)$ is an admissible test function. More precisely, by definition of the two-scale convergence, one has 
\begin{equation} \label{2scaleh}
    \int_{\R^3}  G(x,\tau_{x/\eps})  : \na h_\eps(x) \:  \xrightarrow[\eps \rightarrow 0]{} \int_{\R^3} \E [ G : (\na h_0 + D_H)]
\end{equation}
for all functions $G$ which are sum of functions of the form $\varphi(x) b(\tau_y \omega)$ 
with $b \in L^2(\Omega)$ and smooth compactly supported $\varphi$.  One can, however, using  the same notation as in Step 3, approximate  $g(x,\tau_{x/\eps}\omega)$ uniformly in $L^2$  by 
$$\sum_{C \in \mC, C \subset \mO}  g(x_C,\tau_{x/\eps} \omega) \1_{C}(x).$$   
Thanks to this approximation property and further smoothing of the characteristic functions $1_C$, one can substitute $g$ to $G$ in \eqref{2scaleh}. We leave the details to the reader.

\medskip
Using the energy estimate $\int_{\R^3} |\na h_0 (\na \Phi)^{-1}|^2 = - \int_{\R^3} \E[g] : (\na h_0 (\na \Phi)^{-1})$  following from \eqref{eqh0}, and combining with \eqref{energy_DH.new}, we find 
$$  \int_{\R^3} |\na h_\eps|^2  \rightarrow \int_{\R^3} |\na h_0 (\na \Phi)^{-1}|^2 + \int_{\R^3} \E |D_H  (\nabla \Phi)^{-1}|^2. $$
We remind that so far, this convergence is only along a subsequence that depends on the realization $\omega$. To finish the proof, we still need to show that this accumulation point is unique. It is enough to show that  $h_0$ and $D_H$ are defined independently of the subsequence. As regards $h_0$, it is characterized as the unique solution in $\dot{H}^1(\R^3)$ of the Stokes type equation \eqref{eqh0}. As regards $D_H$, we deduce from \eqref{eqDH} that for almost every $x$, $D_H(x,\cdot)$ is a solution in the space $V = \{ D \in L^2_{pot}(\Omega),  D_H : (\nabla \Phi)^{-1} = 0\}$ of the weak formulation 
$$ \E [(D_H  (\nabla \Phi)^{-1})  : (D(\nabla \Phi)^{-1})]  = -  \E [g : D], \quad \forall D \in V   $$
By the Lax-Milgram lemma, this weak formulation has a unique solution, which characterizes $D_H$ and concludes the proof.  
\begin{proposition}
Almost surely 
$$\sum_i \int |\nabla \bar h^{N,\eta}_i[\Psi]|^2$$
has a deterministic limit. 
\end{proposition}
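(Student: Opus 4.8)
The plan is to reduce the quantity $\sum_i \int |\nabla \bar h^{N,\eta}_i[\Psi]|^2$ to an expression involving only the empirical distribution of the orientations $\bar\xi_i^{app}$ and the local structure of the flow $\Phi$, and then apply the ergodic theorem. First I would use the homogeneity relation \eqref{homogeneity.h}, which gives
\begin{align*}
\int_{\R^3} |\nabla \bar h^{N,\eta}_i[\Psi]|^2 \dd x = \frac{1}{N^2 \eta^3} \int_{\R^3} \big|\nabla h^1[\Psi(\bar X_i^{app},\bar\xi_i^{app})]\big|^2 \dd x = \frac{1_{\Psi(\bar X_i^{app},\bar\xi_i^{app}) \neq 0}}{N^2 \eta^3}\, \mathcal E\big(\Psi(\bar X_i^{app},\bar\xi_i^{app})\big),
\end{align*}
where $\mathcal E(A) := \|\nabla h^1[A]\|_{L^2(\R^3)}^2$ is a fixed quadratic form on $\R^{3\times 3}_0$, independent of $N$ and $\eta$. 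Summing over $i$ and recalling $\eta = c''\eps$ and $N\eps^3 \to \lambda|\mathcal O|$ (from \eqref{asymptN}), we get
\begin{align*}
\sum_i \int_{\R^3} |\nabla \bar h^{N,\eta}_i[\Psi]|^2 \dd x = \frac{1}{N^2 (c'')^3 \eps^3} \sum_i \mathcal E\big(\Psi(\bar X_i^{app},\bar\xi_i^{app})\big) = \frac{1}{N (c'')^3 (N\eps^3)} \sum_i \mathcal E\big(\Psi(\Phi(\eps X_k'),\Xi(\eps X_k',\xi_k'))\big),
\end{align*}
where the sum runs over $k$ with $\eps X_k' \in \mathcal O$.

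The main step is then to apply the generalized ergodic theorem for the marked point process, in the form \eqref{convergence_ergodic2}. With $n = \eps^{-1}$, $A = \mathcal O$, the sum $\eps^3 \sum_{k,\eps X_k'\in\mathcal O} \mathcal E(\Psi(\Phi(\eps X_k'),\Xi(\eps X_k',\xi_k')))$ does not immediately have the form required by \eqref{convergence_ergodic2}, because the summand depends on the slow variable $\eps X_k'$ through $\Phi$ and $\Xi$, not only on the mark $\xi_k'$. To handle this I would freeze the slow variable exactly as in Step 3 of the proof of Proposition \ref{pro:det.limit}: partition a neighborhood of $\mathcal O$ into small cubes $C\in\mathcal C$ of diameter $\alpha$, pick $x_C\in C$, and replace $\mathcal E(\Psi(\Phi(\eps X_k'),\Xi(\eps X_k',\xi_k')))$ by $\mathcal E(\Psi(\Phi(x_C),\Xi(x_C,\xi_k')))$ for $\eps X_k' \in C$. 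On each cube, \eqref{convergence_ergodic2} applies with $F(\xi) = \mathcal E(\Psi(\Phi(x_C),\Xi(x_C,\xi)))$, giving almost surely
\begin{align*}
\eps^3 \sum_{k,\, \eps X_k'\in C} \mathcal E\big(\Psi(\Phi(x_C),\Xi(x_C,\xi_k'))\big) \xrightarrow[\eps\to 0]{} \lambda |C| \int_{\S^2} \mathcal E\big(\Psi(\Phi(x_C),\Xi(x_C,\xi))\big)\,\dd\kappa(\xi).
\end{align*}
Summing over the finitely many cubes contained in $\mathcal O$ and letting $\alpha\to 0$ (using uniform continuity of $\Phi$, $\Xi$, $\Psi$ and continuity of $\mathcal E$ to control the cube-freezing error uniformly in $\eps$, which lets one exchange the $\eps\to0$ and $\alpha\to0$ limits), we obtain the deterministic limit
\begin{align*}
\lim_{N\to\infty} \sum_i \int_{\R^3} |\nabla \bar h^{N,\eta}_i[\Psi]|^2 \dd x = \frac{\lambda}{(c'')^3\, \lambda|\mathcal O|} \int_{\mathcal O} \int_{\S^2} \mathcal E\big(\Psi(\Phi(x),\Xi(x,\xi))\big)\,\dd\kappa(\xi)\, \dd x,
\end{align*}
using $N(c'')^3(N\eps^3) = N(c'')^3\lambda|\mathcal O|(1+o(1))$ and dividing by $N$; more precisely the $1/N$ prefactor is absorbed by writing the left side as $\frac{1}{N}\cdot\frac{1}{(c'')^3 (N\eps^3)}\sum_k(\cdots)$ and noting $\frac1N \sum_k (\cdots)$ converges, so together with the $N\eps^3$ normalization one gets a finite limit. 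I would double-check this bookkeeping of powers of $N$ and $\eps$ against \eqref{asymptN}; the quantity is $O(1)$ since there are $\sim N$ terms each of size $\sim 1/N$ after the $1/N^2\eta^3 \sim 1/(N\cdot N\eps^3)\cdot 1/\eps^0$... — concretely, $\frac{1}{N^2\eta^3}\cdot N = \frac{1}{N\eta^3} = \frac{1}{(c'')^3 N\eps^3} \to \frac{1}{(c'')^3\lambda|\mathcal O|}$, confirming the normalization.

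The main obstacle is not conceptual but the careful handling of the cube-freezing approximation: one must verify that the error introduced by replacing $x$ by $x_C$ inside $\Psi\circ\Phi$ and $\Xi$ is uniform in $\eps$ and vanishes as $\alpha\to0$, so that the order of limits may be exchanged. This is exactly the argument already carried out in Step 3 of the previous proof (the estimate \eqref{eps.alpha.uniform} there), and essentially the same reasoning — using that $\Theta^{c''}$ and $\mathcal E$ depend continuously on the matrix argument and that $\Phi,\Xi,\Psi$ are uniformly continuous on compacts — goes through here. A minor additional point is that the boundary cubes (those meeting $\partial\mathcal O$ but not contained in $\mathcal O$) contribute a term of order $\alpha$ since $\partial\mathcal O$ is a $C^{1,\alpha}$ hypersurface of measure zero, so they are harmless in the $\alpha\to0$ limit. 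Finally, since the right-hand side above is manifestly deterministic, the proposition follows.
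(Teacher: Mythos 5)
Your proof is essentially the same as the paper's. You reduce, via the homogeneity \eqref{homogeneity.h} (the paper uses the equivalent representation $\bar h^{N,\eta}_i[\Psi](x) = \tfrac{1}{N\eps^2}\mH\big(\tfrac{x-\bar X_i^{app}}{\eps}\big)\Psi(\bar X_i^{app},\bar\xi_i^{app})$, with $\mH\Psi = h^{c''}[\Psi]$), to a sum of a fixed quadratic form $\mE$ evaluated along the marked points, and then apply the ergodic theorem \eqref{convergence_ergodic2} after freezing the slow variable on small cubes exactly as in Step~3 of the proof of Proposition~\ref{pro:det.limit}. The only slip is in the explicit constant of the final formula: carrying the normalizations through ($\tfrac{1}{N^2\eps^3}\cdot\eps^3\sum_k\to\tfrac{1}{(\lambda|\mO|)^2}\cdot\lambda\int$) one obtains the prefactor $\tfrac{1}{(c'')^3\lambda|\mO|^2}$ rather than $\tfrac{\lambda}{(c'')^3\lambda|\mO|}=\tfrac{1}{(c'')^3|\mO|}$ as you wrote; this is a bookkeeping error that does not affect the conclusion that the limit is deterministic.
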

{\em Proof.}   We notice that for all $i=1\dots N$, 
$$ \bar h^{N,\eta}_i[\Psi](x) = \frac{1}{N \eps^2}\mH\Big(\frac{x-\bar X_i^{app}}{\eps}\Big) \Psi(\bar X_i^{app},\bar \xi_i^{app})   $$
 and  $\mH(x')$ is a linear map from $M_3(\R)$ to $\R^3$ defined by: for all constant $\Psi \in M_3(\R)$, $\mH \Psi$ is the solution of the Stokes equation
$$ -\Delta (\mH \Psi) + \na (\mP \cdot \Psi)  = \div (\Theta^{c''}[\Psi]) $$   
It follows that
\begin{equation} \label{sumhi}
\begin{aligned}
\sum_i \int |\nabla h^{N,\eta}_i[\Psi]|^2  & = \frac{1}{N^2 \eps^3} \sum_{i=1}^N \Big( \int_{\R^3} \na \mH^T \na \mH \Big) \Psi(\bar X_i^{app}, \bar \xi_i^{app}) : \Psi(\bar X_i^{app},\bar \xi_i^{app}) \\
& \hspace{-2cm} = \frac{1}{N^2 \eps^3} \sum_{k \in \N, \eps X'_k \in \mO} \Big( \int_{\R^3} \na \mH^T \na \mH \Big) \Psi(\Phi(\eps X'_k), \Xi(\eps X_k',\xi'_k)) : \Psi(\Phi(\eps X'_k), \Xi(\eps X_k',\xi'_k))  
\end{aligned}
\end{equation}
Based on this representation, we claim that almost surely 
\begin{align}
& \sum_i \int |\nabla h^{N,\eta}_i[\Psi]|^2 \\
&\xrightarrow[\eps \rightarrow 0]{} \frac{1}{\lambda |\mO|^2} \int_{\mO\times \S^2}  \Big( \int_{\R^3 } \na \mH^T \na \mH \Big) \Psi(\Phi(x), \Xi(x,\xi)) : \Psi(\Phi(x), \Xi(x,\xi))  \kappa(\d\xi) \dd x
\end{align}
This claim, concluding the proof, comes from two facts: first,  almost surely,  by  \eqref{asymptN}
$$ \frac{1}{\eps^6 N^2} \xrightarrow[\eps \rightarrow 0]{} \frac{1}{\lambda^2 |\mO|^2} $$
and second, almost surely, 
\begin{align*} 
\eps^3 \sum_{k \in \N, \eps X'_k \in \mO} \Big( \int_{\R^3} \na \mH^T \na \mH \Big) \Psi(\Phi(\eps X'_k), \Xi(\eps X_k',\xi'_k)) : \Psi(\Phi(\eps X'_k), \Xi(\eps X_k',\xi'_k))    \\
\xrightarrow[\eps \rightarrow 0]{} \lambda\int_{\mO\times \S^2}  \Big( \int_{\R^3 } \na \mH^T \na \mH \Big) \Psi(\Phi(x), \Xi(x,\xi)) : \Psi(\Phi(x), \Xi(x,\xi))  \kappa(\d\xi) \dd x
\end{align*}
This latter convergence can be shown by a partitioning argument much as in Step 3 of the previous proof. By locally uniform continuity of the function $\Big( \int_{\R^3} \na \mH^T \na \mH \Big) \Psi(x,\xi) : \Psi(x,\xi)$, one can partition a neighborhood of $\mO$ into small cubes $C$ on which the oscillation of this function is small. We then split the total sum $\sum_{k \in \N, \eps X'_k \in \mO}$ into sums of the form  $\sum_{k \in \N, \eps X'_k \in C}$, for which the summand does not depend on $\eps X'_k$ anymore, up to a small error. This allows to apply the ergodic theorem in the form \eqref{convergence_ergodic2}. We skip the details and refer to Step 3 of the previous proof for more.

\subsection{Proof of Theorem \ref{cor1}} \label{sec:correlation} 

In all what follows, we consider a fixed time $t$ and omit it from the notations. We also denote, for any matrices $A,B \in \R^{3 \times 3}$,  $A \otimes B :  \na^2 G$ for $A : (\na^2 G B)$. From Theorem \ref{theo3},  Lemma \ref{lem.sum.app}, and the fact that almost surely 
$$ \lim_{\eps \rightarrow  0} N \eps^3 = \lambda |\mO|$$
we know that for all $\Psi_1,\Psi_2$ continuous, almost surely, 
\begin{align*} 
\lim_{N \rightarrow \infty} \frac{1}{N^2}  \sum_i \sum_{j \neq i} \Psi_1(X^{app}_i,\xi_i^{app}) \otimes \Psi_2(X_j^{app},\xi_j^{app}) : \nabla^2 G(X_i^{app} - X_j^{app})    = \frac{1}{\lambda^2 |\mO|^2} \lim_{\eps \rightarrow 0} I^\eps
\end{align*} 
where 
\begin{align*}
    I^\eps  & = \eps^6 \sum_i \sum_{j \neq i} \Psi_1(\bar X_i^{app},\bar \xi_i^{app}) \otimes \Psi_2(\bar X_j^{app},\bar \xi_j^{app}) : \nabla^2 G(\bar X_i^{app}- \bar X_j^{app}) \\
    & = \eps^6 \sum_k \sum_{l \neq k} 1_{\mO}(\eps X'_k) 1_{\mO}(\eps X'_l)\Psi_1(\Phi(\eps X'_k),\Xi(\eps X'_k,\xi'_k)) \otimes \Psi_2(\Phi(\eps X'_l),\Xi(\eps X'_l,\xi'_l)) \\
  & \hspace{10cm}: \nabla^2 G(\Phi(\eps X'_k) - \Phi(\eps X'_l)) 
    \end{align*}
and that this limit is deterministic. Moreover, there exists $C > 0$ such that almost surely, $|I^\eps| \le C$:  this follows from Corollary \ref{coro:bound_IN} and the bound $N \lesssim \eps^{-3}$. Hence, by the dominated convergence theorem: 
\begin{equation}
\lim_{\eps \rightarrow 0} I^\eps = \E \lim_{\eps \rightarrow 0} I^\eps = \lim_{\eps \rightarrow 0} \E I^\eps.
\end{equation}
It remains to compute the right-hand side. Defining 
\begin{align*}
    I^\eps[\tilde{\Psi}_1,\tilde{\Psi}_2] & = \eps^6 \sum_k \sum_{l \neq k} \tilde \Psi_1(\Phi(\eps X'_k),\Xi(\eps X'_k,\xi'_k)) \otimes \tilde \Psi_2(\Phi(\eps X'_l),\Xi(\eps X'_l,\xi'_l))  \\
    & \hspace{9cm} : \nabla^2 G(\Phi(\eps X'_k) - \Phi(\eps X'_l))  , \\
J^0[\tilde{\Psi}_1,\tilde{\Psi}_2] & = \int_{(\S^2)^2 \times (\R^3)^2}  \tilde \Psi_1(x_1,\xi_1)  \otimes  \tilde \Psi_2(x_2,\xi_2) : \nabla^2 G(x_1-x_2)   \tilde \mu(\d x_1,\d \xi_1) \tilde \mu(\d x_2,\d\xi_2)  \\
 & +  \int_{(\S^2)^2  \times (\R^3)^2} \tilde {\Psi}_1(x,\xi_1) \otimes \tilde \Psi_2(x,\xi_2) : \na^2 G(y)\tilde \nu_{2}(\d x, \d y,\d\xi_1,\d\xi_2), 
\end{align*}
where
\begin{align}
    \tilde \mu(\d x, \d \xi) &= (\Phi(x),\Xi(x,\xi))\# (\lambda  \dd x \,  \kappa(\d \xi)), \\
    \tilde \nu_{2}(\d x, \d y, \d \xi_1, \d \xi_2)  
    &= (\Phi(x), \nabla \Phi(x) y , \Xi(x,\xi_1),\Xi(x,\xi_2)) \\
    & \hspace{1cm}\#( \d x \,  \nu_2(\d y,\d \xi_1,\d \xi_2) - \lambda^2   \dd x \dd y\,\kappa(\d \xi_1)  \kappa(\d \xi_2)),
\end{align}
we want to show that 
\begin{equation} \label{convJeps}
\E \, I^\eps[\tilde{\Psi}_1,\tilde{\Psi}_2] \xrightarrow[\eps \rightarrow 0]{} J^0[\tilde{\Psi}_1,\tilde{\Psi}_2]  
\end{equation}
with the special choice 
\begin{equation} \label{def_tilde_Psi}
\tilde{\Psi}_i(x,\xi)= 1_{\Phi(\mO)}(x) \Psi_i(x,\xi).
\end{equation}

Let us stress that, due to the singularity of $\na^2 G$, which is homogeneous of order $-3$, even the definition of $J^0[\tilde{\Psi}_1,\tilde{\Psi}_2]$ is not obvious.  We shall proceed in two steps.\\[3mm]
\emph{Step 1: Proof of \eqref{convJeps} for smooth and compactly supported $\tilde{\Psi}_1, \tilde{\Psi}_2$.}  
Let  $c'$ be the constant in \eqref{ass:separation'} and take $d > 0$ such that for all $y_1,y_2 \in \R^3$ we have $|\Phi(\eps y_1) - \Phi(\eps y_2)| \le d \eps \: \Rightarrow \: |y_1 - y_2| \le c'$.
In this case, by definition of the 2-point correlation function $$\mu_2(\d y_1,\d\xi_1,\d y_2,\d\xi_2) = \nu_2(y_1 - y_2,\d\xi_1,\d\xi_2) \dd y_1 \dd y_2$$
and the fact that it is zero for $|y_1 - y_2| \le c'$
\begin{align*}
    \E I^\eps[\tilde{\Psi}_1,\tilde{\Psi}_2]  = \eps^6 \int_{(\S^2)^2 \times (\R^3)^2} & 1_{|\Phi(\eps y_1) - \Phi(\eps y_2)| \ge d\eps} \tilde{\Psi}_1(\Phi(\eps y_1),\Xi(\eps y_1,\xi_1)) \otimes \tilde{\Psi}_2(\Phi(\eps y_2),\Xi(\eps y_2,\xi_2)) \\
    &\hspace{2cm}: (\nabla^2 G(\Phi(\eps y_1) - \Phi(\eps y_2))  )  \nu_2(y_1-y_2,\d\xi_1,\d\xi_2) \dd y_1 \dd y_2. 
\end{align*}
Hence, 
    \begin{align*}
  \E I^\eps[\tilde{\Psi}_1,\tilde{\Psi}_2] = J^{\eps,d}_1 + J^{\eps,d}_2
\end{align*}
where
\begin{align*}
   J^{\eps,d}_1 &= \eps^6 \int_{(\S^2)^2 \times (\R^3)^2} 1_{|\Phi(\eps y_1) - \Phi(\eps y_2)| \ge d\eps}  \tilde{\Psi}_1(\Phi(\eps y_1),\Xi(\eps y_1,\xi_1)) \otimes  \tilde{\Psi}_2(\Phi(\eps y_2),\Xi(\eps y_2,\xi_2))  \\
   & \hspace{4.5cm}  : \nabla^2 G(\Phi(\eps y_1) - \Phi(\eps y_2))    \lambda^2 \kappa(\d\xi_1) \kappa(\d\xi_2) \dd y_1 \dd y_2, \\
   J^{\eps,d}_2 &=  \eps^6 \int_{(\S^2)^2 \times (\R^3)^2} 1_{|\Phi(\eps y_1) - \Phi(\eps y_2)| \ge d\eps} \tilde{\Psi}_1(\Phi(\eps y_1),\Xi(\eps y_1,\xi_1)) \otimes \tilde{\Psi}_2(\Phi(\eps y_2),\Xi(\eps y_2,\xi_2))   \\
    &\hspace{2.5cm}  : (\nabla^2 G(\Phi(\eps y_1) - \Phi(\eps y_2))  \Big( \nu_2(y_1-y_2,\d\xi_1,\d\xi_2) - \lambda^2 \kappa(\d\xi_1) \kappa(\d\xi_2) \Big) \dd y_1 \dd y_2.  
\end{align*} \\[3mm]
\emph{Substep 1.1: Convergence of $J^{\eps,d}_1$.}
With the change of variables 
$$ x_1 = \Phi(\eps y_1), \quad x_2 = \Phi(\eps y_2) $$
taking into account that $\det(\na \Phi) = 1$, we get
\begin{align*}
J^{\eps,d}_1 & = \int_{(\S^2)^2 \times (\R^3)^2}  1_{|x_1 - x_2| \ge d\eps} \check \Psi_1(x_1,\xi_1) \otimes  \check \Psi_2(x_2,\xi_2) : \nabla^2 G(x_1 - x_2)   \lambda^2 dx_1 dx_2 \kappa(d\xi_1) \kappa(d\xi_2) 
\end{align*}
where
\begin{align} \label{check.Psi}
    \check \Psi_i(x,\xi) = \tilde \Psi_i (x,\Xi(\Phi^{-1}(x),\xi)).
\end{align}
Hence,
\begin{align*}
     J^{\eps,d}_1 
     \xrightarrow[\eps \rightarrow 0]{} &  \int_{(\S^2)^2 \times (\R^3)^2}   \check \Psi_1(x_1,\xi_1) \otimes  \check \Psi_2(x_2,\xi_2) : \nabla^2 G(x_1 - x_2)     \lambda^2 dx_1 dx_2 \kappa(d\xi_1) \kappa(d\xi_2) 
\end{align*}
 where the last integral over $(\R^3)^2$  has to be understood in the sense of a principal value.
This principal value is well-defined by the smoothness of $\tilde{\Psi}_1, \tilde{\Psi}_2$ in $x_1,x_2$. More generally, for any $F \in C_c^1(\R^3 \times \R^3)$, we define 
\begin{align*}
&\int_{(\R^3)^2}  \na^2 G(x_1 - x_2)  F(x_1,x_2)  \dd x_1  \dd x_2   = \lim_{\delta \rightarrow 0}  \int_{\{|x_1 - x_2|>\delta\}}  \na^2 G(x_1  - x_2)  F(x_1,x_2)  \dd x_1  \dd x_2.
\end{align*}
Through a change of variables, we arrive at
 \begin{align}
       J^{\eps,d}_1 
     \xrightarrow[\eps \rightarrow 0]{} & 
\int_{(\S^2)^2 \times (\R^3)^2}  \tilde \Psi_1(x_1,\xi_1) \otimes \tilde \Psi_2(x_1,\xi_1) : \nabla^2 G(x_1-x_2) \tilde \mu(\d x_1,\d \xi_1) \tilde \mu(\d x_2,\d\xi_2).
\end{align}\\[3mm]
\emph{Substep 1.2: Convergence of $J^{\eps,d}_2$.}
We introduce a covering $\mC$ of the compact  set
$$K = \bigcup_{\xi_1,\xi_2 \in \S^2} \supp (\tilde{\Psi}_1(\cdot,\xi_1)) \cup  \supp (\tilde{\Psi}_2(\cdot,\xi_2)) $$
by  a collection  of cubes of size $\delta$. As  the functions
\begin{align} \label{hat.Psi}
   \hat {\Psi}_i(x,\xi)  := \tilde \Psi_i (\Phi(x),\Xi(x,\xi)),
\end{align}
$i=1,2$ are Lipschitz, the oscillation of $\tilde{\Psi}_i$ on each cube $C \in \mC$ is of order $\delta$. The idea is then to approach $\hat {\Psi}_i(x,\xi)$ by $\sum_{C \in \mC} \hat {\Psi}_i(x_C, \xi) 1_{C}$, i.e. 
\begin{align*}
J^{\eps,d}_2 & =  \eps^6 \sum_{C_1,C_2 \in \mathcal{C}} \int_{(\S^2)^2 \times \eps^{-1} C_1 \times \eps^{-1} C_2} 1_{|\Phi(\eps y_1) - \Phi(\eps y_2)| \ge d\eps}  \hat {\Psi}_1(x_{C_1},\xi_1) \otimes \hat {\Psi}_2(x_{C_2},\xi_2) \\
    &\hspace{1.5cm} : \nabla^2 G(\Phi(\eps y_1) - \Phi(\eps y_2))    \Big( \nu_2(y_1-y_2,\d\xi_1,\d\xi_2) - \lambda^2 \kappa(\d\xi_1) \kappa(\d\xi_2) \Big) \dd y_1 \dd y_2 + R_{\eps,\delta}. 
\end{align*}
Using $|\nabla^2 G(x)| \lesssim |x|^{-3}$ and \eqref{mixing}, the remainder $R_{\eps,\delta}$ satisfies 
\begin{align*}
|R_{\eps,\delta}| & \lesssim  \eps^6 \, \delta \sum_{C_1,C_2 \in \mathcal{C}} \int_{\eps^{-1} C_1 \times \eps^{-1} C_2} 1_{|\Phi(\eps y_1)  - \Phi(\eps y_2)| \ge d \eps} \frac{h(y_1 - y_2)}{|\Phi(\eps y_1)  - \Phi(\eps y_2)|^3} \dd y_1 \dd y_2 \\
& \lesssim \delta \sum_{C_1,C_2 \in \mathcal{C}} \int_{C_1}  \int_{\eps^{-1} (x_1 - C_2)} 
\frac{h(y)}{1+|y|^3} \dd y \, \dd x_1 \: \lesssim \: \delta \int_{\R^3} \frac{h(y)}{1+|y|^3} \dd y = O(\delta) .
 \end{align*}
Then, making again the change of variable 
$$ x_1 = \eps y_1, \quad y = y_1 - y_2, $$
we get 
\begin{align}
  J^{\eps,d}_2 & =  \sum_{C_1,C_2 \in \mC}   \int_{C_1}  \left(\int_{\eps^{-1}(x_1 - C_2)}  \int_{(\S^2)^2}  \hat{\Psi}_1(x_{C_1},\xi_1) \otimes   \hat{\Psi}_2(x_{C_2},\xi_2) :  \nu^{d,\eps}_G(x_1,y,\d\xi_1,\d\xi_2) \dd y \right) \dd x_1 \\
  &+ O(\delta) \label{eq:J_2.parenth}
  \end{align}
where 
\begin{align*} \nu^{d,\eps}_G(x_1,y,\d\xi_1,\d\xi_2) & = \eps^3 1_{|\Phi(x_1) - \Phi(x_1 - \eps y)| \ge d\eps}\nabla^2 G(\Phi(x_1) - \Phi(x_1 - \eps y)) \\
    &\hspace{3cm}   \Big( \nu_2(y,\d\xi_1,\d\xi_2) - \lambda^2 \kappa(\d\xi_1) \kappa(\d\xi_2) \Big)
\end{align*}
The expression inside the parenthesis of \eqref{eq:J_2.parenth} defines a function $K^{\eps,d}(x_1)$. We first claim that 
 \begin{align} \label{Keps1}
            K^{\eps,d}(x_1) & \xrightarrow[\eps \rightarrow 0]{} 
       0  &\quad \text{if } \: 0 \not\in x_1-C_2   
  \end{align}
Indeed, thanks to the decay of $G$, we get for all $x_1$
$$ |K^{\eps,d}(x_1)| \lesssim \int_{\eps^{-1}(x_1 - C_2)} \frac{h(y)}{1 +|y|^3}\dd y $$
and the right-hand side goes to zero by the dominated convergence theorem: indeed, if $0 \not\in x_1-C_2$, the function $ 1_{\eps^{-1}(x_1 - C_2)} \frac{h(y)}{1 +|y|^3}$ goes to zero pointwise, and is dominated by  $\frac{h(y)}{1 +|y|^3}$, which is integrable thanks to \eqref{mixing}.

On the other hand, we notice that  $|\Phi(x_1) - \Phi(x_1 - \eps y)| \ge d \eps$ implies $y \geq d' $, for  $d' = \mathrm{Lip}(\Phi^{-1}) d$. Using also that $\nabla^2 G$ is $-3$ homogeneous, we get for $|\Phi(x_1) - \Phi(x_1 - \eps y)| \ge d \eps$ that
\begin{align*}
   \left| \eps^3 \nabla^2 G(\Phi(x_1) - \Phi(x_1 - \eps y)) - \nabla^2 G(\nabla \Phi(x_1) y)\right| \lesssim \|\nabla^2 \Phi\|_\infty \eps \lesssim \eps
\end{align*}
Thus, defining 
\begin{align*} \nu^{d}_G(x_1,y,\d\xi_1,\d\xi_2) & = 1_{|\na \Phi(x_1)y| \ge d} \nabla^2 G(\na \Phi(x_1)y)  \Big( \nu_2(y,\d\xi_1,\d\xi_2) - \lambda^2 \kappa(\d\xi_1) \kappa(\d\xi_2) \Big)
\end{align*}
we get 
  \begin{align} \label{Keps2}
      K^{\eps,d}(x_1) & \xrightarrow[\eps \rightarrow 0]{} 
           \int_{\R^3} \int_{(\S^2)^2 } \hat{\Psi}_1(x_{C_1},\xi_1) \otimes  \hat{\Psi}_2(x_{C_2},\xi_2) :  \nu^d_G(x_1,y,\d\xi_1,\d\xi_2) \dd y &\: \text{if } \: 0 \in x_1-C_2.
  \end{align}
Combining this last inequality with \eqref{Keps1}, and the uniform bound:  
   $$ |K^{\eps,d}(x_1)| \red{\lesssim \int_{\eps^{-1}(x_1 - C_2)} \frac{h(y)}{1 +|y|^3}\dd y} \lesssim \int_{\R^3} \frac{h(y)}{1 +|y|^3}\dd y \lesssim 1. $$  
 we can apply the dominated convergence theorem to $\int_{C_1} K^{\eps,d}(x_1) dx_1$ to get that    
\begin{align*}&\limsup_{\eps \to 0} \Big|J^{\eps,d}_2 - \sum_{C_1,C_2 \in \mC}   \int_{C_1} 1_{C_2}(x_1) \int_{(\S^2)^2 \times \R^3} \hat{\Psi}_1(x_{C_1},\xi_1) \otimes  \hat{\Psi}_2(x_{C_2},\xi_2) \\
&\hspace{8cm} :  \nu_G^d(x_1,y,\d\xi_1,\d\xi_2) \dd y \dd x_1 \Big| =  O(\delta)  
\end{align*}
that is 
$$ \limsup_{\eps \to 0} \Big|J^{\eps,d}_2 - \sum_{C_1 \in \mC} \int_{C_1}  \int_{(\S^2)^2 \times \R^3} \tilde{\Psi}_1(x_{C_1},\xi_1) \otimes  \tilde{\Psi}_2(x_{C_1},\xi_2) :  \nu^d_G(x_1,y,\d\xi_1,\d\xi_2) \dd y \dd x_1\Big| =  O(\delta)  $$
Interchanging the limits $\delta \to 0$ and $\eps \to 0$ thanks to the uniform bound on $R_{\eps,\delta}$, we conclude
\begin{align*} 
&\lim_{\eps \rightarrow 0} J^{\eps,d}_2 =  \int_{(\S^2)^2 \times (\R^3)^2} \hat{\Psi}_1(x,\xi_1) \otimes  \hat{\Psi}_2(x,\xi_2) : \nu^d_G(x,y,\d\xi_1,\d\xi_2) \dd  y \dd x \\
&\xrightarrow[d \rightarrow 0]{}    \int_{(\S^2)^2 \times (\R^3)^2} \hat{\Psi}_1(x,\xi_1) \otimes  \hat{\Psi}_2(x,\xi_2)  : \na^2 G(\na \Phi(x)y) \\
& \hspace{6cm} \left(\nu_2(y,\d\xi_1,\d\xi_2) - \lambda^2 \kappa(\d\xi_1) \kappa(\d\xi_2)   \right)  \dd y \dd x 
\end{align*}
where the last  integral in $y$ has to be understood again in the sense of a principal value. 
A change of variables, finally yields
\begin{align}
\lim_{\eps \rightarrow 0} J^{\eps,d}_2 =   \int_{(\S^2)^2  \times (\R^3)^2} \tilde {\Psi}_1(x,\xi_1) \otimes \tilde \Psi_2(x,\xi_2) :  \na^2 G(y)\tilde \nu_{2}(\d x, \d y,\d\xi_1,\d\xi_2).
\end{align}

Actually, the integral 
$$\lim_{\eps \rightarrow 0} J^{\eps,d}_2 =  \int_{(\S^2)^2 \times (\R^3)^2} \hat{\Psi}_1(x,\xi_1) \otimes  \hat{\Psi}_2(x,\xi_2) : \nu^d_G(x,y,\d\xi_1,\d\xi_2) \dd  y \dd x  $$ 
is constant in  $d$, as for all $d < d' < c'$
\begin{align*}
&\Big|  \int_{(\S^2)^2 \times (\R^3)^2} \hat{\Psi}_1(x,\xi_1) \otimes  \hat{\Psi}_2(x,\xi_2) : \nu^d_G(x,y,\d\xi_1,\d\xi_2) \dd  y \dd x \\
- &  \int_{(\S^2)^2 \times (\R^3)^2} \hat{\Psi}_1(x,\xi_1) \otimes  \hat{\Psi}_2(x,\xi_2) : \nu^{d'}_G(x,y,\d\xi_1,\d\xi_2) \dd  y \dd x\Big| \\
& = \Big| \int_{(\S^2)^2 \times \{d \le |y|\le d'\} \times \R^3} \hat{\Psi}_1(x,\xi_1) \otimes  \hat{\Psi}_2(x,\xi_2) : \na^2 G(y)   \lambda^2 \kappa(\d\xi_1) \kappa(\d\xi_2)   \dd y \dd x\Big|  = 0 
\end{align*} 
using that 
$$\int_{\{d \le |y|\le d'\}} \na^2 G(y) \dd y = 0.$$ This independence with respect to $d$ is consistent with the fact that $\lim_{\eps \rightarrow 0} J^{\eps,d}_1$ is also independent of $d$, see above. This concludes the proof of \eqref{convJeps} in the case of smooth $\tilde{\Psi}_1, \tilde{\Psi}_2$. \\[3mm]
\emph{Step 2: Proof of \eqref{convJeps}--\eqref{def_tilde_Psi}.}  This follows from Step 1 through an approximation argument. For $i=1,2$ let $(\Psi^\delta_i)_{\delta > 0}$ be a sequence of smooth and compactly supported functions converging uniformly to $\Psi_i$ in a neighborhood of $\Phi(\mO)$. Let also $(\chi^\delta_{\Phi(\mO)})_{\delta > 0}$ be a sequence of  smooth functions with values in $[0,1]$, which is $1$ in $\Phi(\mO)^\delta = \{ x \in \Phi(\mO), d(x,\pa \Phi(\mO)) > \delta\}$, and $0$ outside $\Phi(\mO)^{\delta/2}$. Introducing $\tilde{\Psi}^\delta_i = \chi_{\Phi(\mO)}^\delta \Psi_i^\delta$, we find using the bilinearity of $I^\eps[\tilde{\Psi}_1, \tilde{\Psi}_2]$: 
\begin{align*}
  |  I^\eps[\tilde{\Psi}_1,\tilde{\Psi}_2] -  I^\eps[\tilde{\Psi}^\delta_1, \tilde{\Psi}^\delta_2] |
 & \le |  I^\eps[(1_{\Phi(\mO)} - \chi^\delta_{\Phi(\mO)})\Psi_1, \tilde{\Psi}_2] |  + |I^\eps[ \chi^\delta_{\Phi(\mO)}(\Psi_1 - \Psi^\delta_1),  \tilde{\Psi}_2]| \\
  & + |  I^\eps[\tilde{\Psi}^\delta_1, (1_{\Phi(\mO)} - \chi^\delta_{\Phi(\mO)})\Psi_2] |  + |I^\eps[\tilde{\Psi}^\delta_1,  \chi^\delta_{\Phi(\mO)}(\Psi_2 - \Psi^\delta_2)]|
\end{align*}
Applying a straightforward analogue of Corollary \ref{coro:bound_IN},  taking into account that 
$$ \sharp \{k, (1_{\Phi(\mO)} - \chi_{\Phi(\mO)}^\delta)(\eps X'_k) \neq 0 \}  \lesssim \delta \eps^{-3}$$
we end up with 
\begin{align} &\sup_{\eps > 0} |  I^\eps[\tilde{\Psi}_1,\tilde{\Psi}_2] -  I^\eps[\tilde{\Psi}^\delta_1, \tilde{\Psi}^\delta_2] |  \\
&\le C \left( \sqrt{\delta}  \|\Psi_1\|_\infty \|\Psi_2\|_\infty + \|\Psi_1 - \Psi_1^\delta\|_\infty \|\Psi_2\|_\infty +  \|\Psi_1\|_\infty 
\|\Psi_2 - \Psi_2^\delta\|_\infty \right) 
\end{align}
so that in particular
\begin{equation}
\lim_{\delta \rightarrow 0} \, \sup_{\eps > 0} \, |  \E I^\eps[\tilde{\Psi}_1,\tilde{\Psi}_2] -  \E I^\eps[\tilde{\Psi}^\delta_1, \tilde{\Psi}^\delta_2] | = 0 
\end{equation}
By Step 1, it remains to show that 
$$ J^0[\tilde{\Psi}^\delta_1,\tilde{\Psi}^\delta_2] \xrightarrow[\delta \rightarrow 0]{}  J^0[\tilde{\Psi}_1,\tilde{\Psi}_2] $$
We split 
$$J^0[\tilde{\Psi}^\delta_1,\tilde{\Psi}^\delta_2] = J^0_1[\tilde{\Psi}^\delta_1,\tilde{\Psi}^\delta_2] + J^0_2[\tilde{\Psi}^\delta_1,\tilde{\Psi}^\delta_2]$$
with 
\begin{align*}
    J^0_1[\tilde{\Psi}^\delta_1,\tilde{\Psi}^\delta_2] & = \int_{(\S^2)^2 \times  (\R^3)^2} \tilde{\Psi}^\delta_1(x_1,\xi_1) \otimes \tilde{\Psi}^\delta_2(x_2,\xi_2)  : \nabla^2 G(x_1 - x_2) \tilde \mu(\d x_1,\d \xi_1) \tilde \mu(\d x_2,\d\xi_2)  \\
   J^0_2[\tilde{\Psi}^\delta_1,\tilde{\Psi}^\delta_2] & = \int_{(\S^2)^2 \times  \R^3 \times \R^3}  \tilde{\Psi}^\delta_1(x,\xi_1) \otimes \tilde{\Psi}^\delta_2(x,\xi_2) : \nabla^2 G(y)  \tilde \nu_{2}(\d x, \d y,\d\xi_1,\d\xi_2), 
   \end{align*}
For  the convergence of $J^0_2$, we use the fact noticed above that for $d < c'$, 
 \begin{align*}
     J^0_2[\tilde{\Psi}_1^\delta,\tilde{\Psi}^\delta_2] & =  \int_{(\S^2)^2 \times (\R^3)^2} \hat{\Psi}_1^\delta(x,\xi_1) \otimes  \hat{\Psi}_2^\delta(x,\xi_2) : \nu^d_G(y,\d\xi_1,\d\xi_2) \dd  y \dd x
     \end{align*}
(the same with $\hat{\Psi}_1, \hat{\Psi}_2$) where $\hat \Psi_i^\delta$ is defined analogously to \eqref{hat.Psi}. We deduce, using \eqref{mixing},
\begin{align*}
& \big|J^0_2[\tilde{\Psi}_1^\delta,\tilde{\Psi}^\delta_2] -  J^0_2[\tilde{\Psi}_1,\tilde{\Psi}_2] \big| \le   
\big|J^0_2[\tilde{\Psi}_1^\delta - \tilde{\Psi}_1,\tilde{\Psi}^\delta_2] \big| + \big|
J^0_2[\tilde{\Psi}_1,\tilde{\Psi}^\delta_2 - \tilde{\Psi}_2]  \big| \\
&\le  \left(\| \hat{\Psi}_1^\delta - \hat{\Psi}_1 \|_{L^1_x L^\infty_{\xi}} \|\hat{\Psi}_2^\delta \|_{L^\infty_{x,\xi}} +  \|\hat{\Psi}_1 \|_{L^\infty_{x,\xi}} \| \hat{\Psi}_2^\delta - \hat{\Psi}_2 \|_{L^1_x L^\infty_{\xi}} \right) \int_{(\S^2)^2 
\times \R^3} |\nu_G^d(y,\d \xi_1, \d \xi_2)|\dd y  \\
&\lesssim   \left(\| \tilde{\Psi}_1^\delta - \tilde{\Psi}_1 \|_{L^1_x L^\infty_{\xi}} + \| \tilde{\Psi}_2^\delta - \tilde{\Psi}_2 \|_{L^1_x L^\infty_{\xi}} \right) \xrightarrow[\delta \rightarrow 0]{} 0 
\end{align*}
Finally, we reformulate 
\begin{align*}
J^0_1[\tilde{\Psi}^\delta_1,\tilde{\Psi}^\delta_2] = \lambda^2 \int_{(\S^2)^2} \bigg( \int_{\R^3} \check{\Psi}_1^\delta(x_1, \xi_1) \big(\na^2 G \ast \check{\Psi}^\delta_2(\cdot, \xi_2)\big)(x_1)  \dd x_1 \bigg)  \kappa(\d \xi_1) \kappa(\d \xi_2) 
\end{align*}
where $\check{\Psi}_1^\delta$ is defined analogously to \eqref{check.Psi} and
where the convolution integral 
$$\na^2 G \ast \check{\Psi}^\delta_2(\cdot, \xi_2)\big)(x_1)  = \int_{\R^3} \na^2 G(x_1 - x_2) : \check{\Psi}^\delta_2(x_2,\xi_2) \dd x_2 $$
is understood as a principal value. We now take advantage of the fact that $\na^2 G$ is a Calderon-Zygmund operator, so that the operator $\na^2 G \ast$ extends as a  continuous operator from $L^p(\R)$ to $L^p(\R)$ for all $1 < p < \infty$. In particular, as $\check{\Psi}_i^\delta(\cdot, \xi_i)$ converges to  $\check{\Psi}_i(\cdot, \xi_i)$ in $L^2(\R^3)$, uniformly in $\xi_i$ we deduce that 
$$J^0_1[\tilde{\Psi}^\delta_1,\tilde{\Psi}^\delta_2] \xrightarrow[\delta \rightarrow 0]{} J^0_1[\tilde{\Psi}_1,\tilde{\Psi}_2]  $$
where $J^0_1[\tilde{\Psi}_1,\tilde{\Psi}_2]$ is understood as 
$$J^0_1[\tilde{\Psi}_1,\tilde{\Psi}_2] = \lambda^2 \int_{(\S^2)^2} \bigg( \int_{\R^3} \check{\Psi}_1(x_1, \xi_1) \big(\na^2 G \ast \check{\Psi}_2(\cdot, \xi_2)\big)(x_1)  \dd x_1 \bigg)  \kappa(\d \xi_1) \kappa(\d \xi_2) $$
This finishes the proof.

\subsection{Proof of Theorem \ref{thm.f-f_phi}} \label{subsec:proof_final}

\emph{Step 1: Well-posedness of \eqref{f_phi}.}

The regularity assumptions on $B$ are enough to define $f_\phi$ through the flow: the characteristics
\begin{align} \label{flow.phi}
\begin{aligned}
    \partial_t \Phi_\phi(t,x,\xi) = u_\phi(t, \Phi_\phi(t,x,\xi)),  \\
    \partial_t \Xi_\phi(t,x,\xi) = M(\Xi_\phi(t,x,\xi))(\nabla u_\phi(t, \Phi_\phi(t,x,\xi))    + \phi B(t, \Phi_\phi(t,x,\xi),\Xi_\phi(t,x,\xi)) )\Xi_\phi(t,x,\xi), \\
    \Phi_\phi(0,x,\xi) = x,  \qquad \Xi_\phi(0,x,\xi) = \xi
\end{aligned}
\end{align}
are well-defined under Lipschitz continuity of $B$ with respect to $\xi$. Indeed, the ODE for $\Phi_\phi$ does not involve $\Xi_\phi$, and can be solved independently as its vector field $u_\phi$ is Lipschitz in $\Phi_\phi$. Once $\Phi_\phi$ is known, the ODE for $\Xi_\phi$ has a vector field that can be seen as bounded in time and Lipschitz in $\Xi_\phi$ so that it can be solved as well.   Uniqueness of the solution $f_\phi$  is then obtained by duality.
\\[3mm]
\emph{Step 2: Proof that $f_N^{app}$ is a distributional solution to 
 \begin{align}
        \partial_t f_N^{app} + u_\phi \cdot \nabla f_N^{app} + \dv_\xi(M \nabla u \xi f_N^{app}) & =    - \dv_\xi(M \nabla u_{diff} \xi f + \phi M B \xi f) + R_N^{app}, \label{f_N^app.remainder} \\
        f_N^{app}(0) &  = f_N^0 = \frac1N \sum_{i=1}^N \delta_{X_i^0,\xi_i^0} \label{f_N^app.0},
    \end{align}
    where the remainder satisfies 
    \begin{align} \label{bound.R_N}
              R_N^{app} & = o(\phi) \:  \text{ in } L^\infty(0,T; W^{-1,q'}_x W^{-2,q'}_\xi) 
    \end{align}
(see Theorem \ref{thm.f-f_phi} for a definition of this space).}

We recall from \eqref{u_diff} the notation $u_{diff} = u - u_\phi$.
Let $\varphi \in C_c^\infty([0,T) \times \R^3 \times \S^2) $.
Then, for all $s \in (0,T)$, by \eqref{X^app}, \eqref{xi^app}, with the convention $\nabla^2 G(0) = 0$ 
\begin{align}
    &\int_{\R^3 \times \S^2} \varphi(t,x,\xi) f^{app}_{N}(t,\d x, \d \xi) - \int_{\R^3 \times \S^2} \varphi(0,x,\xi) f_N^0(\d x, \d \xi) \\
    &= \int_0^s \int_{\R^3 \times \S^2} \left(\partial_t \varphi(t,x,\xi) +  u_\phi(t,x) \cdot \nabla_x \varphi(t,x,\xi) \right) f_N^{app}(t, \d x, \d \xi) \dd t \\
   &+ \int_0^s \int_{\R^3 \times \S^2} \int_{\R^3 \times \S^2}  \nabla_\xi \varphi(t,x,\xi)  \cdot  M(\xi)  \left( \nabla u(t,x) + \phi \nabla^2 G(x-y) S(\zeta) D u (t,y) \right) \xi \\
   &\hspace{8cm}  f_N^{app}(t, \d y, \d \zeta)  f_N^{app}(t, \d x, \d \xi) \dd t\\
   &= \int_0^s \int_{\R^3 \times \S^2} \left(\partial_t \varphi(t,x,\xi) +  u_\phi(t,x) \cdot \nabla_x \varphi(t,x,\xi) + \nabla_\xi \varphi(t,x,\xi)  \cdot  M(\xi)  \nabla u(t,x) \right) \\
    &\hspace{6.5cm}  f_N^{app}(t, \d x, \d \xi) \dd t 
   + \phi \int_0^s I_N[ \bar{M}\nabla_\xi \varphi,S D u] \dd t,
\end{align}
where we recall the notations $\bar{M}$ from \eqref{def_barM} and $I^N[\Psi_1,\Psi_2]$ from \eqref{def:IN}. Hence, \eqref{f_N^app.remainder}--\eqref{f_N^app.0} is satisfied with
\begin{align}
    \frac 1 \phi \langle R_N^{app}, \varphi\rangle  &:= \int_0^T I_N[ \bar{M}\nabla_\xi \varphi,S D u] -  \int_0^T\int_{\R^3 \times \S^2}  \nabla_\xi  \varphi(t,x,\xi) \cdot  M(\xi) B(t,x,\xi) \xi f(t,\d x, \d \xi) \dd t\\
    &-\int_0^T\int_{\R^3 \times \S^2} \nabla_\xi  \varphi(t,x,\xi) \cdot   M \frac{\nabla u_{diff}(t,x)}{\phi} \xi  f(t,\d x,\d \xi) \dd t .
\end{align}
Theorem \ref{cor1} and \eqref{B.indentity} imply
\begin{align}
   \lim_{N \to \infty} \frac 1 \phi \langle R_N^{app}(t), \varphi \rangle  = 0 \quad \text{for all } \varphi \in C_c^\infty(\R^3 \times \S^2).
\end{align}
By Corollary \ref{coro:bound_IN} and boundedness of $S$ and $\nabla u$, we have
\begin{align}
    \sup_{t \in [0,T]} I_N[\bar{M}\nabla_\xi \varphi,S D u]  \le C   \|\varphi(t,\cdot)\|_{L^\infty_x (W^{1,\infty}_\xi)}.
\end{align}
Arguing as in Remark \ref{rem:well.defined} shows that $\frac 1 \phi  R_N^{app}$ satisfies the same bound.
In particular, through the compact embedding $W^{1,q}_x W^{2,q}_\xi \Subset L^\infty_x (W^{1,\infty}_\xi)$
\begin{align}
  \frac 1 \phi   R_N^{app}(t) \to 0 \quad \text{in } W^{-1,q'}_x W^{-2,q'}_\xi.
\end{align}
The dominated convergence theorem implies \eqref{bound.R_N}.\\[3mm]
\emph{Step 3: Proof that    
\begin{align} \label{f^app.tilde.f.phi}
    \sup_{t \in (0,T)}\|f_N^{app}(t)-\tilde f_\phi(t)\|_{W^{-1,q'}_x W^{-2,q'}_\xi} = o(\phi),
\end{align} 
where $\tilde f_\phi$ is defined as the solution to
\begin{align}
        \partial_t \tilde f_\phi + u_\phi \cdot \nabla \tilde f_\phi + \dv_\xi(M \nabla u \xi \tilde f_\phi) & =    -  \dv_\xi(M \nabla u_{diff} \xi f + \phi M B \xi f), \\
        \tilde f_\phi(0) &= f^0 .
    \end{align}
}   

Given $\varphi \in W^{1,q}_x W^{2,q}_\xi$ and some fixed time $s$, we introduce the solution $\overline{\varphi}$ of the backwards  transport equation
\begin{align}
        \partial_t \overline{\varphi} + u_\phi \cdot \nabla_x \overline{\varphi} + (M \nabla u \xi)  \cdot \na_\xi \overline{\varphi}  &= 0,\\
        \overline{\varphi} (s) & = \varphi
\end{align}
We find, denoting $g_N=f_{N}^{app}-\tilde f_\phi$
\begin{align}
\int_{\R^3 \times \S^2} g_N(s) \varphi(s) = \int_{\R^3 \times \S^2} (f_N^0 - f^0) \overline{\varphi}(0) +  \int_0^s \langle R_N^{app}(t), \overline{\varphi}(t) \rangle \dd t 
\end{align}
Moreover, one can express $\overline{\varphi}$ in terms of $\varphi$ and of the flow $\big(\Phi_\phi(t,t',x,\xi), \Xi(t,t',x,\xi)\big)$  of the field $(x,\xi) \rightarrow \big(u_\phi(x), M(\xi) \na u(x) \xi \big)$: 
$$ \overline{\varphi}\big(t,\Phi_\phi(t,s,x,\xi), \Xi(t,s,x,\xi)\big) = \varphi(x,\xi).  $$
Note that $\Phi_\phi$ does not actually depend on $\xi$. It has the same $W^{1,\infty}$ regularity in $x$ as $u_\phi$, see \eqref{regularity.u}. Similarly,  $\Xi$ has the same  $W^{1,\infty}$ regularity in $x$ as $u_\phi$  and $\na u$ (see again \eqref{regularity.u})  and is smooth in $\xi$. It follows that, for all $r$, 
$$\sup_{0 \le t \le T}\|\overline{\varphi}(t)\|_{W^{1,r}_x W^{2,r}_\xi}  \le C \|\varphi\|_{W^{1,r}_x W^{2,r}_\xi}.  $$
Hence, \eqref{f^app.tilde.f.phi} follows from this estimate with $r=q$, from our assumption on $f^0_N - f^0$ and \eqref{bound.R_N}.\\[3mm]
\emph{Step 4: Proof that 
 \begin{align} \label{f.phi.tilde.f.phi}
 \sup_{t \in (0,T)}\|f_\phi(t)-\tilde f_\phi(t)\|_{W^{-1,q'}_x W^{-2,q'}_\xi} = o(\phi).
 \end{align} 
}

We write 
\begin{align}
        \partial_t  (f-f_\phi) + u_\phi \cdot \nabla (f-f_\phi) & + \dv_\xi(M \nabla u \xi  (f-f_\phi))    \\
        & = u_{diff} \cdot \nabla_x f  +  \dv_\xi(M \nabla u_{diff} \xi f_\phi + \phi M B \xi f_\phi), \\
       (f-f_\phi)(0) &= 0.
    \end{align}
The right-hand side is $O(\phi)$ in $W^{-1,q'}_x W^{-1,q'}_\xi$ so by duality as in Step 3 we get
\begin{align} \label{f.f.phi}
     \sup_{t \in (0,T)} \|f{(t)}-f_\phi(t)\|_{W^{-1,q'}_x W^{-1,q'}_\xi} \lesssim \phi.
\end{align}

 The function $g_\phi = f_\phi-\tilde f_\phi$ satisfies 
 \begin{align}
        \partial_t  g_\phi + u_\phi \cdot \nabla g_\phi + \dv_\xi(M \nabla u_0 \xi g_\phi) & =   \phi \dv_\xi(M \nabla u_1 \xi (f-f_\phi) + \phi M B \xi (f-f_\phi)), \\
       g_\phi(0) &= 0.
    \end{align}
By \eqref{f.f.phi} the right-hand side is  $O(\phi^2)$ in $ W^{-1,q'}_x W^{-2,q'}_\xi$. Hence, reproducing the arguments of Step 2 yields \eqref{f.phi.tilde.f.phi}.\\[3mm]
\emph{Step 5: Conclusion.}
    We claim that
    \begin{align} \label{W_infty.bounds.negative.sobolev}
       \|f_N(t)-f_N^{app}(t)\|_{W^{-1,q'}_x W^{-2,q'}_\xi} \lesssim  W_\infty(f_N(t),f_N^{app}(t)) + N^{-1/3 + 1/q}
    \end{align}
    Then, we can bound
   \begin{align}
       \|f_N(t)-f_\phi(t)\|_{W^{-1,q'}_x W^{-2,q'}_\xi} &\leq \sup_{t \in (0,T)} \mathcal W_\infty(f_N(t),f_N^{app}(t)) + N^{-1/3 + 1/q} \\
       &+ \|f_N^{app}(t)-\tilde f_\phi(t)\|_{W^{-1,q'}_x W^{-2,q'}_\xi} + \|f_\phi(t)-\tilde f_\phi(t)\|_{W^{-1,q'}_x W^{-2,q'}_\xi}
   \end{align}  
     and inserting estimates \eqref{W_infty.app}, \eqref{f^app.tilde.f.phi} and \eqref{f.phi.tilde.f.phi} and using the assumption $N^{-1/3 + 1/q}= o(\phi)$ yields the desired estimate.

It remains to prove \eqref{W_infty.bounds.negative.sobolev}. Fix $t > 0$ which we suppress in the following. 
Let $c' > 0$ be chosen such that $|X_i - X_j| \geq c'$ and $|X_i^{app} - X_j^{app}| \geq c'$ for all $i \leq j$ which is possible thanks to \eqref{ass:separation}, \eqref{dmin.control} and \eqref{dmin.control.app}.
For $\varphi \in C_c^\infty(\R^3 \times \S^2)$ we estimate
\begin{align} \label{f_N-f_N^app.negative.0}
\begin{aligned}
    \Big|\int \varphi \dd (f_N - f_N^{app})\Big| &= \Big|\frac 1 N \sum_i (\varphi(X_i,\xi_i) - \varphi(X_i^{app},\xi_i^{app})\Big| \\
    & \lesssim \Big|\frac 1 N \sum_i (\varphi(X_i,\xi_i) - \varphi(X_i^{app},\xi_i)\Big| +\Big|\frac 1 N \sum_i (\varphi(X_i^{app},\xi_i) - \varphi(X_i^{app},\xi_i^{app})\Big| \\
   & \lesssim \mathcal W_\infty( f_N, f_N^{app}) \| \varphi\|_{W^{1,\infty}_\xi(L^
\infty_x)} +  N^{-1/3 + 1/q} \|\varphi \|_{ L^\infty_\xi(C_x^{0,1- 3/q})} \\
   &+ \Big|\frac 1 N \sum_i \fint_{B(X_i,c' N^{-1/3})} \varphi(x,\xi_i) \dd x - \fint_{B(X_i^{app},c' N^{-1/3})} \varphi(x,\xi_i) \dd x\Big|.
   \end{aligned}
\end{align}
By the embedding $W^{1,q}_x W^{2,q}_\xi \subset W^{1,\infty}_\xi(L^
\infty_x)$ as well as $W^{1,q}_x W^{2,q}_\xi \subset W^{1,q}_\xi(W^{1,q}_x) \subset L^\infty_\xi(C_x^{0,1- 3/q})$ we bound 
\begin{align}  \label{f_N-f_N^app.negative.1}
    \mathcal W_\infty( f_N, f_N^{app}) \| \nabla_\xi \varphi\|_\infty +  N^{-1/3 + 1/q} \|\varphi \|_{C^{0,1- 3/q}} \lesssim \left( W_\infty( f_N, f_N^{app})  +  N^{-1/3 + 1/q} \right) \|\varphi\|_{W^{1,q}_x W^{2,q}_\xi}. \qquad 
\end{align}

Consider the measures $\bar \rho_N, \bar \rho_N^{app} \in \mathcal P(\R^3)$ defined as
\begin{align}
    \bar \rho_N := \frac 1 {c'^3} \sum_i \1_{B(X_i,c' N^{-1/3})}, \qquad 
    \bar \rho_N^{app} : = \frac 1 {c'^3}  \sum_i \1_{B(X_i^{app},c' N^{-1/3})}.
\end{align}
 Then $\|\bar \rho^N\|_\infty + \|\bar \rho_N^{app}\|_\infty \lesssim 1$. 
In particular, there exists an ($\bar \rho_N$ a.e. unique) optimal transport map $T \colon \R^3 \to \R^3$ for $\mathcal W_{q'}(\bar \rho_N,\bar \rho_N^{app})$, i.e.\ $T$ satisfies $\bar \rho_N^{app} = T\# \bar \rho_N$ and
\begin{align} \label{W_infty.bar}
   \mathcal W_{q'}(\bar \rho_N,\bar \rho_N^{app})  = \bigg(\int_{\R^3}  |x - T(x)|^{q'}  \dd \bar \rho_N \bigg)^{1/{q'}}&\leq  \mathcal W_\infty(\bar \rho_N, \bar \rho_N^{app}) \\
   &\leq \mathcal W_\infty(\bar \rho_N, \rho_N) + \mathcal  W_\infty(\bar \rho_N^{app}, \rho_N^{app}) + \mathcal W_\infty( \rho_N, \rho_N^{app})  \\
   & \leq 2 c' N^{-1/3} +  \mathcal W_\infty( f_N, f_N^{app}).
\end{align}
Moreover, for $\theta \in [0,1]$, let $\nu_\theta$ be the constant-speed geodesic connecting $\nu_0 = \bar \rho_N,\nu_1 =\bar \rho_N^{app} $ given through
\begin{align}
 \nu_\theta = (\theta T + (1- \theta) \Id)\# \bar \rho_N.
\end{align}
By \cite[Proposition 5.27 and Proposition 7.29]{Santambrogio15}, we have for all $\theta \in [0,1]$
\begin{align} \label{nu_theta}
    \|\nu_\theta\|_\infty \leq \max\{ \|\bar \rho_N\|_\infty,  \|\bar \rho^{app}_N\|_\infty\} \lesssim 1.
\end{align}
We then estimate the last right-hand side term in \eqref{f_N-f_N^app.negative.0}
\begin{align}
    & \bigg|\frac 1 N \sum_i \fint_{B(X_i,c' N^{-1/3})} \varphi(x,\xi_i) \dd x - \fint_{B(X_i^{app},c' N^{-1/3})} \varphi(x,\xi_i) \dd x\bigg| \\
    & \leq \int_{\R^3} |\sup_\xi \varphi(x,\xi) - \varphi(T(x),\xi)| \dd \bar \rho_N(x) \\
    & =  \int_{\R^3} |\sup_\xi \int_0^1 (x - T(x)) \cdot \nabla_x \varphi(\theta x + (1- \theta) T(x),\xi) \dd \theta | \dd \bar  \rho_N (x) \\
    & \leq \bigg(\int_{\R^3}  |x - T(x)|^{q'}  \dd \bar \rho_N \bigg)^{1/{q'}} \int_0^1 \bigg(\int_{\R^3} \sup_\xi   |\nabla_x \varphi (x,\xi) |^q \dd \nu_\theta(x) \dd \theta\bigg)^{1/q} \\
    & \lesssim  \mathcal W_{\red{q'}}(\bar \rho_N, \bar \rho_N^{app}) \|\nabla_x \varphi\|_{L^{q}_x(L^\infty_\xi)} \sup_\theta \|\nu_\theta\|_\infty.
\end{align}
Combining this inequality with \eqref{nu_theta}, \eqref{W_infty.bar}, \eqref{f_N-f_N^app.negative.0} and \eqref{f_N-f_N^app.negative.1} yields \eqref{W_infty.bounds.negative.sobolev}.

\appendix

\section{Quenched two-scale convergence} \label{appA}
The goal of this section is to provide a few elements of the theory of quenched stochastic two-scale convergence, introduced in \cite{ZhiPia2006}. It has to be distinguished from other notions of two-scale convergence, notably the so-called two-scale stochastic convergence in the mean of \cite{BoMiWr}. See \cite{HeNeVa2022} for a review. We only focus on the properties that are of direct use in this paper.  Let $\Omega$ a compact metric space, and $P$ a probability measure on $(\Omega, \mB(\Omega))$. We assume that $\Omega$ is equipped with a stationary ergodic shift $(\tau_y)_{y \in \R^3}$. We call typical realization an element $\tilde \omega \in \Omega$ such that property \eqref{convergence_ergodic} holds for all $g \in C(\Omega)$ and for any bounded Borel set $A$ with $|A| > 0$. The set of typical elements is of full measure, combining the ergodic theorem and the separability of $C(\Omega)$. 
\begin{definition} \label{def:2scale}
Let $\tilde \omega$ a typical realization (fixed). Let $(u_\eps = u_\eps(x))_{\eps> 0}$ in $L^2_{loc}(\R^3)$, and $U = U(x,\omega)$ in  $L^2_{loc}(\R^3, L^2(\Omega))$. We say that $u_\eps$ two-scale converges to $U$ (under the realization $\tilde\omega$)  if for all $b \in L^2(\Omega)$, for all $\varphi \in C^\infty_c(\R^3)$,   
$$\int_{\R^3} u^\eps(x) \varphi(x) b(\tau_{x/\eps}\tilde \omega) \dd x \xrightarrow[\eps \rightarrow 0]{}  \int_{\Omega}\int_{\R^3} U(x,\omega) \varphi(x) b(\omega) \dd x \dd P(\omega) $$ 
(notation : $u_\eps \xrightarrow[]{2} U$)
\end{definition}
The main compactness property of two-scale convergence is provided by 
\begin{theorem}
Let $\tilde \omega$ a typical realization and $(u_\eps)_{\eps > 0}$ a sequence bounded in $L^2_{loc}(\R^3)$. There exists $U \in L^2_{loc}(\R^3, L^2(\Omega))$ and a subsequence in $\eps$ along which 
$$u_\eps \xrightarrow[]{2} U$$ 
(under the realization $\tilde \omega$). 
\end{theorem}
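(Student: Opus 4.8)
The plan is to adapt the classical compactness proof for two-scale convergence to the quenched setting: a diagonal extraction against a countable dense family of oscillating test functions, whose only analytic ingredient is the ergodic theorem \eqref{convergence_ergodic} with a slowly varying weight, available because $\tilde\omega$ is a typical realization. \textbf{Step 1 (weighted ergodic identity).} First I would show that for $\varphi\in C_c^\infty(\R^3)$ and $g\in C(\Omega)$,
$$\int_{\R^3}\varphi(x)\,g(\tau_{x/\eps}\tilde\omega)\,\dd x\ \xrightarrow[\eps\to0]{}\ \Big(\int_{\R^3}\varphi(x)\,\dd x\Big)\,\E g .$$
This is obtained by covering $\supp\varphi$ by finitely many cubes $Q_j$ of side $\delta$ (so that $\eps^{-1}Q_j=n Q_j$ with $n=1/\eps\to\infty$), applying \eqref{convergence_ergodic} on each $Q_j$ with $g\in C(\Omega)$, summing, bounding the error by the modulus of continuity of $\varphi$, and finally letting $\delta\to0$. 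Applying this identity to the finitely many products $\varphi_i\bar\varphi_j\in C_c^\infty(\R^3)$ and $b_i\bar b_j\in C(\Omega)$ then yields, for every finite sum $\Phi=\sum_i\varphi_i\otimes b_i$,
$$\Big\|\sum_i\varphi_i(\cdot)\,b_i(\tau_{\cdot/\eps}\tilde\omega)\Big\|_{L^2(\R^3)}\ \xrightarrow[\eps\to0]{}\ \|\Phi\|_{L^2(\R^3\times\Omega,\,\dd x\,\dd P)} ,$$
which is the quantitative estimate that powers the rest of the argument.

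\textbf{Step 2 (equibounded functionals, diagonal extraction).} Fix $R>0$, put $C_R:=\sup_\eps\|u_\eps\|_{L^2(B_R)}<\infty$, and let $\mathcal T_R$ denote the space of finite sums $\Phi=\sum_i\varphi_i\otimes b_i$ with $\varphi_i\in C_c^\infty(\R^3)$ supported in $B_R$ and $b_i\in C(\Omega)$; this space is dense in $L^2(B_R\times\Omega,\,\dd x\,\dd P)$. For $\Phi\in\mathcal T_R$, Cauchy--Schwarz combined with Step 1 gives, for $\eps$ small,
$$\Big|\int_{\R^3}u_\eps(x)\,\Phi\big(x,\tau_{x/\eps}\tilde\omega\big)\,\dd x\Big|\ \le\ 2\,C_R\,\|\Phi\|_{L^2(B_R\times\Omega)} ,$$
where $\Phi(x,\tau_{x/\eps}\tilde\omega):=\sum_i\varphi_i(x)\,b_i(\tau_{x/\eps}\tilde\omega)$. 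Since $\Omega$ is compact metric, $C(\Omega)$ is separable, hence $\mathcal T_R$ contains a countable subset dense in $L^2(B_R\times\Omega)$; a diagonal argument over this subset and over $R\in\N$ produces $\eps_n\to0$ along which $\ell(\Phi):=\lim_n\int u_{\eps_n}\Phi(\cdot,\tau_{\cdot/\eps_n}\tilde\omega)\,\dd x$ exists for all $\Phi\in\bigcup_R\mathcal T_R$, with $|\ell(\Phi)|\le 2C_R\|\Phi\|_{L^2(B_R\times\Omega)}$.

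\textbf{Step 3 (Riesz representation, conclusion).} For each $R$, $\ell$ extends by density to a bounded linear functional on $L^2(B_R\times\Omega)$, so Riesz gives $U_R\in L^2(B_R\times\Omega)$ with $\ell(\Phi)=\int_{B_R}\int_\Omega U_R\,\Phi\,\dd P\,\dd x$ for $\Phi\in\mathcal T_R$. Uniqueness forces $U_{R'}=U_R$ on $B_R\times\Omega$ whenever $R'>R$, so the $U_R$ glue to a single $U\in L^2_{\mathrm{loc}}(\R^3,L^2(\Omega))$. Choosing $\Phi=\varphi\otimes b$ with $\varphi\in C_c^\infty(\R^3)$, $b\in C(\Omega)$ we get
$$\int_{\R^3}u_{\eps_n}(x)\,\varphi(x)\,b(\tau_{x/\eps_n}\tilde\omega)\,\dd x\ \xrightarrow[n\to\infty]{}\ \int_{\R^3}\int_\Omega U(x,\omega)\,\varphi(x)\,b(\omega)\,\dd P(\omega)\,\dd x ,$$
and this passes to general $b\in L^2(\Omega)$ by density, using that the left-hand side is bounded by $C_R(1+o(1))\|\varphi\|_{L^2}\|b\|_{L^2(\Omega)}$ uniformly in $n$. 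This is precisely $u_{\eps_n}\xrightarrow[]{2}U$ under the realization $\tilde\omega$, which is the assertion.

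\textbf{Main obstacle.} Steps 2 and 3 are soft functional analysis. The only point carrying genuine content is Step 1: running the ergodic theorem \eqref{convergence_ergodic} for the \emph{fixed} typical realization $\tilde\omega$ against a slowly varying weight, i.e.\ applying \eqref{convergence_ergodic} simultaneously over the cubes of a partition while keeping the oscillation error of $\varphi$ controlled uniformly in $\eps$ before refining the mesh, and being careful to invoke \eqref{convergence_ergodic} only for continuous integrands on $\Omega$ — which is why the argument is first run for $b\in C(\Omega)$ and then extended to $b\in L^2(\Omega)$ by density.
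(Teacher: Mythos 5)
The paper does not actually prove this theorem: it is quoted from \cite{ZhiPia2006} (``We only focus on the properties that are of direct use in this paper''), so there is no in-paper argument to compare against. Your Steps 1--3 reproduce the standard compactness proof and are essentially sound. Step 1 correctly reduces the weighted ergodic identity to \eqref{convergence_ergodic} applied on cubes, which is available at the fixed typical realization $\tilde\omega$ precisely because the integrands $b_i b_j$ are \emph{continuous} on $\Omega$; this gives $\|\sum_i\varphi_i\, b_i(\tau_{\cdot/\eps}\tilde\omega)\|_{L^2(\R^3)}\to\|\Phi\|_{L^2(\dd x\,\dd P)}$, and the diagonal extraction, the identification of the limit on all of $\mathcal T_R$ (via approximation by the countable family, legitimate since differences $\Phi-\Phi'$ are again finite tensor sums to which Step 1 applies), and the Riesz representation all go through.

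The one genuine gap is the last sentence of Step 3. Definition \ref{def:2scale} tests against all $b\in L^2(\Omega)$, and you pass from $b\in C(\Omega)$ to $b\in L^2(\Omega)$ ``by density, using that the left-hand side is bounded by $C_R(1+o(1))\|\varphi\|_{L^2}\|b\|_{L^2(\Omega)}$ uniformly in $n$.'' That bound requires $\int_{B_R}|b(\tau_{x/\eps}\tilde\omega)|^2\,\dd x\to|B_R|\,\E|b|^2$, i.e.\ \eqref{convergence_ergodic} for the integrand $|b|^2\in L^1(\Omega)$. For a merely square-integrable $b$ the exceptional null set in the ergodic theorem depends on $b$, so this is not available at the \emph{fixed} $\tilde\omega$ (indeed $x\mapsto b(\tau_{x/\eps}\tilde\omega)$ is not even well defined for an $L^2$-equivalence class, and a semicontinuous majorization does not rescue the upper bound). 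This is an imprecision inherited from the paper's definition rather than a flaw in your strategy: the rigorous formulations of quenched two-scale convergence (\cite{ZhiPia2006}, \cite{HeNeVa2022}) restrict the test functions $b$ to $C(\Omega)$, or to a fixed countable family relative to which typicality is defined. With that restriction — which is all the paper ever uses — your proof is complete; as literally stated, the final extension step does not go through.
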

\begin{itemize}
\item  We insist that in the previous theorem, $U$ and the subsequence depend \emph{a priori} on $\tilde \omega$. 
\item By taking $b=1$, one sees immediately, that along the same subsequence, $u_\eps$ converges weakly in $L^2_{loc}$ to $\E U$.
\end{itemize}
\begin{proposition}
Let $\tilde \omega$ a typical realization and $(u_\eps)_{\eps > 0}$ a sequence  in $H^1_{loc}(\R^3)$, satisfying for any bounded $K$
$$\|u_\eps\|_{L^2(K)}  \le C(K, \tilde \omega), \quad \eps \| \na u_\eps\|_{L^2(K)} \rightarrow 0$$
 Then, 
 $$ u^\eps  \xrightarrow[]{2} u^0 $$
 along a subsequence, where $u_0 = u_0(x) \in L^2_{loc}(\R^3)$ does not depend on $\omega$. 
\end{proposition}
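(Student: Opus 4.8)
The plan is to show that the two-scale limit of a sequence $(u_\eps)_{\eps>0}$ bounded in $L^2_{\loc}(\R^3)$ with $\eps\|\nabla u_\eps\|_{L^2(K)}\to 0$ is necessarily independent of $\omega$. The key structural fact is that the two-scale limit of a gradient which is ''too small'' at scale $\eps$ can only be a constant in $\omega$, i.e. it lives in the orthogonal complement of the potential fields $L^2_{\mathrm{pot}}(\Omega)$. The starting point is the general compactness theorem quoted just above: passing to a subsequence, $u_\eps \xrightarrow{2} u_0$ for some $u_0 \in L^2_{\loc}(\R^3,L^2(\Omega))$ under the fixed typical realization $\tilde\omega$. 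It remains to identify the oscillating part of $u_0$ as zero.

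First I would test against well-chosen oscillating test functions. Let $\varphi\in C^\infty_c(\R^3)$ and let $b\in C^1(\Omega)$ (in the sense of the differentiable structure on $\Omega$ associated with the shift, as used in Step 4 of the proof of Theorem~\ref{theo3}), so that $\nabla_y\big(b(\tau_{x/\eps}\tilde\omega)\big) = \tfrac1\eps (\nabla_\omega b)(\tau_{x/\eps}\tilde\omega)$. Consider the test field $x\mapsto \eps\,\varphi(x)\,b(\tau_{x/\eps}\tilde\omega)$ and integrate by parts:
\begin{align*}
\int_{\R^3} u_\eps\, \partial_{x_j}\Big(\eps\,\varphi(x)\,b(\tau_{x/\eps}\tilde\omega)\Big)\dd x
&= \int_{\R^3} u_\eps\,\varphi(x)\,(\partial_{\omega_j} b)(\tau_{x/\eps}\tilde\omega)\dd x \\
&\quad + \eps\int_{\R^3} u_\eps\,(\partial_{x_j}\varphi)(x)\,b(\tau_{x/\eps}\tilde\omega)\dd x .
\end{align*}
The left-hand side equals $-\eps\int \partial_{x_j}u_\eps\,\varphi\,b(\tau_{x/\eps}\tilde\omega)\dd x$, which is bounded by $\eps\|\nabla u_\eps\|_{L^2(K)}\|\varphi b\|_{L^2}\to 0$ by hypothesis, with $K=\supp\varphi$. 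The second term on the right is $O(\eps)$ since $u_\eps$ is bounded in $L^2(K)$ and $b$ is bounded. Hence the first term on the right tends to $0$; by the definition of two-scale convergence it converges to $\int_\Omega\int_{\R^3} u_0(x,\omega)\,\varphi(x)\,(\partial_{\omega_j}b)(\omega)\dd x\,\dd P(\omega)$. Therefore
\[
\int_{\R^3}\varphi(x)\Big(\int_\Omega u_0(x,\omega)\,(\partial_{\omega_j}b)(\omega)\dd P(\omega)\Big)\dd x = 0
\]
for all $\varphi\in C^\infty_c(\R^3)$ and all $j$, so for a.e.\ $x$ one has $\E\big[u_0(x,\cdot)\,\partial_{\omega_j}b\big]=0$ for all $b\in C^1(\Omega)$ and all $j$. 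By density of $\{\nabla_\omega b : b\in C^1(\Omega)\}$ in $L^2_{\mathrm{pot}}(\Omega)$, this means $u_0(x,\cdot)\perp L^2_{\mathrm{pot}}(\Omega)$. Since in addition $u_0(x,\cdot)$ is curl-free in $\omega$ trivially (it carries no oscillating gradient constraint beyond the one just derived), the standard decomposition $L^2(\Omega)=\R\oplus L^2_{\mathrm{pot}}(\Omega)\oplus L^2_{\mathrm{sol},0}(\Omega)$ combined with the fact that $u_0$, as a two-scale limit of a scalar sequence with no solenoidal obstruction, lies in $\R\oplus L^2_{\mathrm{pot}}(\Omega)$ forces $u_0(x,\cdot)$ to be constant in $\omega$, i.e.\ $u_0(x,\omega)=u_0(x)$ for a.e.\ $x$. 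Setting $u_0(x):=\E[u_0(x,\cdot)]$ then gives $u_\eps\xrightarrow{2}u_0$ along the subsequence, with $u_0\in L^2_{\loc}(\R^3)$ independent of $\omega$, which is the claim.

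The main obstacle here is purely one of bookkeeping with the differentiable structure on $\Omega$: one must make sure that $C^1(\Omega)$ is rich enough that $\{\nabla_\omega b\}$ is dense in $L^2_{\mathrm{pot}}(\Omega)$ (this is classical in stochastic homogenization, cf.\ the references in Appendix~\ref{appA}), and that the functions $x\mapsto \eps\varphi(x)b(\tau_{x/\eps}\tilde\omega)$ are legitimate test functions for integration by parts against $u_\eps\in H^1_{\loc}$, which they are since $b\in C^1(\Omega)$ makes $x\mapsto b(\tau_{x/\eps}\tilde\omega)$ locally Lipschitz. Everything else — the extraction of the two-scale convergent subsequence, the $O(\eps)$ estimates, and the orthogonal decomposition of $L^2(\Omega)$ — is standard and routine once the correct test-function class is in place.
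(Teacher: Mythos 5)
Your strategy is the standard one and essentially correct through the integration by parts: testing against $\eps\,\varphi(x)\,b(\tau_{x/\eps}\tilde\omega)$ with $b\in C^1(\Omega)$, using $\eps\|\nabla u_\eps\|_{L^2(K)}\to 0$ to kill the boundary term, and passing to the two-scale limit in $\int u_\eps\,\varphi\,(\partial_{\omega_j}b)(\tau_{x/\eps}\tilde\omega)\dd x$ indeed yields
$$\E\big[u_0(x,\cdot)\,\partial_{\omega_j}b\big]=0 \quad\text{for a.e. } x, \text{ all } j, \text{ all } b\in C^1(\Omega).$$
The paper does not itself give a proof of this proposition (it is quoted from the literature), so there is no in-text argument to compare against; the question is only whether your argument is sound.

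Your final paragraph, however, is not. The object $u_0(x,\cdot)$ is a \emph{scalar} element of $L^2(\Omega)$, and there is no decomposition $L^2(\Omega)=\R\oplus L^2_{\mathrm{pot}}(\Omega)\oplus L^2_{\mathrm{sol},0}(\Omega)$ of scalar $L^2(\Omega)$: the Helmholtz-type splitting into potential and solenoidal parts applies to $L^2(\Omega;\R^3)$, i.e. to vector fields. The phrases ``curl-free in $\omega$ trivially'' and ``no solenoidal obstruction'' therefore carry no meaning here, and the statement ``$u_0(x,\cdot)\perp L^2_{\mathrm{pot}}(\Omega)$'' conflates orthogonality of a scalar against each component of a vector field with orthogonality in a Hilbert space of vector fields. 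Moreover, the density you invoke (of $\{\nabla_\omega b\}$ in $L^2_{\mathrm{pot}}$) is not the relevant one: what you actually need is density of $\{\partial_{\omega_j}b:b\in C^1(\Omega)\}$ in the closure of the range of $\partial_{\omega_j}$ inside scalar $L^2(\Omega)$, separately for each $j$.

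The correct conclusion from $\E\big[u_0(x,\cdot)\,\partial_{\omega_j}b\big]=0$ is more elementary and does not go through potential/solenoidal fields at all. Each $\partial_{\omega_j}$ is (the generator of) a one-parameter unitary group $T_{te_j}$ on $L^2(\Omega)$ and hence skew-adjoint, so orthogonality to the range of $\partial_{\omega_j}$ means membership in $\ker(\partial_{\omega_j}^*)=\ker(\partial_{\omega_j})$; that is, $u_0(x,\cdot)$ is invariant under $T_{te_j}$ for every $t$. Intersecting over $j$, $u_0(x,\cdot)$ is invariant under all shifts, and ergodicity of $(\tau_y)_{y\in\R^3}$ then forces $u_0(x,\cdot)$ to be $P$-a.s.\ constant. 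This replaces your last paragraph and closes the argument; everything before it is fine.
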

In our setting, we shall consider a sequence that is bounded in $H^1_{loc}(\R^3)$. In such case, one can get more than in the previous theorem. We first need to remind the notion of stochastic gradient. We say that $b = b(\omega)$ belongs to $C^1(\Omega)$ if  for all $\omega$, the map
$$y \rightarrow b(\tau_y \omega)$$
is differentiable at $0$, and its gradient  at zero, denoted $\na_\omega b$ is continuous on $\Omega$. One can then define 
$$ L^2_{pot}(\Omega) = \text{ the closure of } \{ \na_\omega b, \, b \in C^1(\Omega) \} \text{ in } \: L^2(\Omega)$$
We shall use the following result: 
\begin{proposition}
Let $\tilde \omega$ a typical realization and $(u_\eps)_{\eps > 0}$ a sequence bounded  in $H^1_{loc}(\R^3)$. Then, there exists 
$$u_0 = u_0(x) \in L^2_{loc}(\R^3),$$  
independent of $\omega$, and 
$$D_U = D_U(x,\omega) \in  L^2_{loc}(\R^3, L^2_{pot}(\Omega))$$
such that along a subsequence in $\eps$ 
\begin{align*}
    & u_\eps \xrightarrow[]{2} u_0, \quad \na u_\eps   \xrightarrow[]{2} \na u_0 + D_U
\end{align*}
\end{proposition}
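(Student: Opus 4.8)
The plan is to follow the classical two-scale compactness argument for gradients (as in \cite{JKO2012}, transported to the quenched setting of \cite{ZhiPia2006}), exploiting the two-scale compactness theorem stated above together with the structure of $L^2_{pot}(\Omega)$.

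First I would apply that compactness theorem twice: since $(u_\eps)$ is bounded in $H^1_{loc}(\R^3)$, both $u_\eps$ and each partial derivative $\pa_k u_\eps$ are bounded in $L^2_{loc}(\R^3)$, so by extracting nested subsequences there exist $u_0\in L^2_{loc}(\R^3,L^2(\Omega))$ and $V=(V_1,V_2,V_3)\in L^2_{loc}(\R^3,L^2(\Omega))^3$ such that, along one common subsequence (still denoted $\eps$), $u_\eps\xrightarrow{2}u_0$ and $\na u_\eps\xrightarrow{2}V$ under the fixed realization $\tilde\omega$. Along this subsequence $\eps\|\na u_\eps\|_{L^2(K)}\to 0$, so the preceding proposition, together with uniqueness of the two-scale limit, gives that $u_0=u_0(x)$ is independent of $\omega$. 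Testing the definition of two-scale convergence with $b\equiv 1$ yields $u_\eps\wto u_0$ and $\na u_\eps\wto\E V$ in $L^2_{loc}(\R^3)$, and since $(u_\eps)$ is bounded in $H^1_{loc}$ this forces $\E V(x,\cdot)=\na u_0(x)$ for a.e.\ $x$. I then set $D_U:=V-\na u_0$, so $\E D_U(x,\cdot)=0$ a.e., and it remains to prove $D_U(x,\cdot)\in L^2_{pot}(\Omega)$ for a.e.\ $x$.

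The core of the argument is to use that $\na u_\eps$ is a gradient. Fixing $\varphi\in C^\infty_c(\R^3)$, $c\in C^1(\Omega)$ and indices $i,j$, I would test the distributional identity $\pa_i\pa_j u_\eps=\pa_j\pa_i u_\eps$ against $\eps\,\varphi(x)\,c(\tau_{x/\eps}\tilde\omega)$ and integrate by parts once; the terms carrying a factor $\eps\,\pa\varphi$ are $O(\eps)$ because $\na u_\eps$ is bounded in $L^2_{loc}$, leaving
\begin{align*}
\int_{\R^3}\pa_j u_\eps\,\varphi\,(\pa_{\omega,i}c)(\tau_{x/\eps}\tilde\omega)\,\d x-\int_{\R^3}\pa_i u_\eps\,\varphi\,(\pa_{\omega,j}c)(\tau_{x/\eps}\tilde\omega)\,\d x=O(\eps).
\end{align*}
Since $\pa_{\omega,i}c,\pa_{\omega,j}c\in C(\Omega)\subset L^2(\Omega)$, the two-scale convergence $\na u_\eps\xrightarrow{2}V$ lets me pass to the limit and, $\varphi$ being arbitrary, obtain for a.e.\ $x$ that $\E[V_j(x,\cdot)\,\pa_{\omega,i}c]=\E[V_i(x,\cdot)\,\pa_{\omega,j}c]$. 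Using $\E[\pa_{\omega,k}c]=0$ (stationarity) and that $\na u_0$ is deterministic, this becomes the stochastic curl-free condition $\E[(D_U)_j(x,\cdot)\,\pa_{\omega,i}c]=\E[(D_U)_i(x,\cdot)\,\pa_{\omega,j}c]$. Running $c$ over a countable $C^1(\Omega)$-dense family (which exists since $\Omega$ is a compact metric space, Lemma~\ref{lemma_Omega_compact}) produces a single null set outside of which this holds for all admissible $c$.

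To conclude I would invoke the Weyl--Helmholtz decomposition of $L^2(\Omega)^3$ into constants, potential fields and mean-zero solenoidal fields, valid under stationarity and ergodicity (see \cite{JKO2012,ZhiPia2006}): a mean-zero $L^2(\Omega)$ vector field satisfying the above curl-free condition lies in $L^2_{pot}(\Omega)$. Applying this for a.e.\ $x$ to $D_U(x,\cdot)$ gives $D_U\in L^2_{loc}(\R^3,L^2_{pot}(\Omega))$, which together with $V=\na u_0+D_U$ finishes the proof. I expect the main obstacles to be twofold: the bookkeeping needed to pass from ``for each $c$, for a.e.\ $x$'' to ``for a.e.\ $x$, for all $c$'' (handled by separability of $C^1(\Omega)$ and a common null set), and, more substantively, the identification of $L^2_{pot}(\Omega)$ with the mean-zero stochastically curl-free fields, which is the genuinely non-elementary input and is where ergodicity of the shift enters.
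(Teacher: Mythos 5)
The paper does not prove this proposition: Appendix \ref{appA} states it as a known result of the quenched two-scale theory and points to \cite{ZhiPia2006} (see also \cite{HeNeVa2022}), so there is no in-paper argument to compare against. Your reconstruction is the standard proof and is correct: extracting two-scale limits of $u_\eps$ and $\na u_\eps$, identifying $u_0$ as deterministic via the preceding proposition, identifying $\E V=\na u_0$ by testing with $b\equiv 1$, deriving the stochastic curl-free condition for $D_U=V-\na u_0$ by testing the commutation of mixed derivatives against $\eps\,\varphi(x)\,c(\tau_{x/\eps}\tilde\omega)$, and concluding via the Weyl decomposition of $L^2(\Omega)^3$. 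You also correctly locate the only non-elementary ingredient — that a mean-zero weakly curl-free field lies in $L^2_{pot}(\Omega)$, which is where ergodicity is used — and the separability/null-set bookkeeping is handled appropriately.
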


\section{Proof of Lemma \ref{lem:Disintegration}}
\label{sec:App.Disintegration}

     First, we claim that the (non-negative) measure
\begin{equation} \label{measure_correlation}
    |1_{\mO}(x) \dd x  \, \nu_2(y,\d \xi_1,\d \xi_2) \d y - \lambda^2  1_{\mO}(x) \dd x \dd y \, \kappa(\d \xi_1)  \kappa(\d \xi_2)|
    \end{equation} 
    has a projection on $\R^3_x \times \S^2_{\xi_1}$ that is  absolutely continuous with respect to $f^0(\d x,\d\xi_1) = \frac{1}{|\mO|}1_\mO(x) \d x \kappa(\d\xi_1)$. Admitting this claim temporarily, it follows that $|\nu_{2,t}|$ has a projection on $\R^3_x \times \S^2_{\xi_1}$ which is absolutely continuous with respect to $(\Phi(t,x),\Xi(t,x,\xi_1)) \# f^0 = f(t, \cdot)$. The same holds for $\Big|\na^2 G(y) S(\xi_2) Du(x)  \nu_{2,t}\Big|$, or more precisely $\Big| \na^2 G(y) 1_{|y| \ge \delta} S(\xi_2) Du(x) \nu_{2,t} \Big|$, see Remark \ref{rem:well.defined}. By the disintegration theorem and the Radon-Nikodym theorem, it follows that there exists a matrix field $B = B(t,x,\xi_1)$ such that
\begin{align} \label{B.indentity}
   & \int_{(\S^2)^2  \times (\R^3)^2} \Big(\bar M(\xi_1) \nabla_\xi \varphi(x,\xi_1) \Big) : \Big( \na^2 G(y) S(\xi_2) Du(x) \Big) \nu_{2,t}(\d x, \d y,\d\xi_1,\d\xi_2)\\
   = & \int_{\R^3 \times \S^2}   \Big(\bar M(\xi_1) \nabla_\xi \varphi(x,\xi_1)\Big) : B(t,x,\xi_1)   f(t,\d x, \d\xi_1) \\
   = &  \int_{\R^3 \times \S^2}   \nabla_\xi \varphi(x,\xi_1) : \big(M(\xi_1) B(t,x,\xi_1) \xi_1 \big) f(t,\d x, \d\xi_1) .
\end{align}
It remains to prove the claim about the measure in \eqref{measure_correlation}. Clearly, it is enough to show that the projection on $\S^2_{\xi_1}$ of 
$$  | \nu_2(y,\d \xi_1,\d \xi_2) \d y - \lambda^2 \dd y \, \kappa(\d \xi_1)  \kappa(\d \xi_2)|$$
is absolutely continuous with respect to $\kappa(d\xi_1)$. 
By standard approximation, it is enough to prove that for any non-negative $\varphi \in C^\infty_c(\R^3)$, 
$$ \varphi(y) | \nu_2(y,\d \xi_1,\d \xi_2) \d y - \lambda^2  \dd y \, \kappa(\d \xi_1)  \kappa(\d \xi_2)|$$
satisfies the same property. It is then clearly enough to show that the same property holds for the (non-negative) measure
$$ \varphi(y) \, \nu_2(y,\d \xi_1,\d \xi_2) \d y $$
This is in turn equivalent to showing that for any $y_1 \in \R^3$, for any non-negative $\psi \in C^\infty_c(\R^3)$, this property holds for $\int_{\R^3} \psi(y_1) \varphi(y_1 - y)  \nu_2(y, \cdot) \dd y_1$. 
But, for any non-negative continuous function $h = h(\xi_1)$ on $\S^2$, we find 
\begin{align*}
  &  \int_{\S^2 \times \S^2 \times \R^3 \times \R^3} h(\xi_1) \psi(y_1) \varphi(y_1 - y)  \nu_2(y, \d \xi_1, \d \xi_2) \dd y \dd y_1 \\
   = &  \int_{\S^2 \times \S^2 \times \R^3 \times \R^3} h(\xi_1) \psi(y_1) \varphi(y_2) \nu_2(y_1 - y_2, \d \xi_1, \d \xi_2) \dd y_2 \dd y_1 \\
    = & \int_{\R^3 \times \R^3  \times  \S^2 \times \S^2} h(\xi_1) \psi(y_1) \varphi(y_2) \mu_2(\d y_1 , \d y_2, \d \xi_1, \d \xi_2) \\
& =  \E \, \sum_{i \neq j}  h(\xi'_i) \psi(X'_i) \, \varphi(X'_j) \,   \\
& \le \|\varphi\|_\infty \,  \E \, \text{card}\big(\{j, X'_j \in \supp \varphi \}\big)  \sum_i h(\xi'_i) \psi(X'_i)  \\
& \le C  \,  \E \,  \sum_i h(\xi'_i) \psi(X'_i) = C \lambda \int_{\R^3} \psi(y_1) dy_1 \int_{\S^2} h(\xi_1) \kappa(d\xi_1).   
\end{align*}
where we have used the hardcore condition on the point process to bound the cardinal by a deterministic constant. This shows that the linear form on $C(\S^2)$ 
$$h \mapsto C' \int_{\S^2} h(\xi_1) \kappa(d\xi_1) - \int_{\S^2 \times \S^2 \times \R^3 \times \R^3} h(\xi_1) \psi(y_1) \varphi(y_1 - y)  \nu_2(y, \d \xi_1, \d \xi_2) \dd y \dd y_1   $$
where $C' := C \lambda \int_{\R^3} \psi(y_1) dy_1$ is non-negative. Hence, by the Riesz representation theorem \cite[Theorems 2.14 and 6.19]{Rudin}, it can be uniquely identified with a regular Borel measure, and this measure is nonnegative. Since the projection $\int_{\R^3} \psi(y_1) \varphi(y_1 - y)  \nu_2(y, \cdot) \dd y_1$ onto $\S^2_{\xi_1}$ as well as $\kappa(d\xi_1)$ are Borel measures, and all Borel measures on $\S^2$ are regular (see \cite[Theorem 2.18]{Rudin}), it follows that $C'\kappa$ dominates the projection of $\int_{\R^3} \psi(y_1) \varphi(y_1 - y)  \nu_2(y, \cdot) \dd y_1$ onto $\S^2_{\xi_1}$. It follows that the latter is absolutely continuous with respect to the former, which concludes the proof.

\emergencystretch=1em
 \begin{refcontext}[sorting=nyt]
\printbibliography
 \end{refcontext}
\end{document}